\def\qed{\hfill\ifhmode\unskip\nobreak\fi\quad\ifmmode\Box\else\hfill$\Box$\fi}
\def\ite#1{\hfill\break${}$\hbox to 50pt {\quad(#1)\hfill}}
\def\cG{{\mathcal G}}
\def\cH{{\mathcal H}}
\newtheorem{thm}{Theorem}[section]
\newtheorem{const}[thm]{Construction}
\newtheorem{definition}[thm]{Definition}
\newtheorem{remark}[thm]{Remark}
\newtheorem{lem}[thm]{Lemma}
\newtheorem{conj}[thm]{Conjecture}
\newtheorem{question}[thm]{Question}
\begin{document}


\title{\vspace{-0.5in} Longest cycles in 3-connected  hypergraphs and  bipartite graphs }

\author{
{{Alexandr Kostochka}}\thanks{
\footnotesize {University of Illinois at Urbana--Champaign, Urbana, IL 61801
 and Sobolev Institute of Mathematics, Novosibirsk 630090, Russia. E-mail: \texttt {kostochk@math.uiuc.edu}.
 Research 
is supported in part by NSF grant  DMS-1600592
and grants  18-01-00353A and 19-01-00682
  of the Russian Foundation for Basic Research.
}}
  \and{Mikhail Lavrov\thanks{Department of Mathematics, University of Illinois at Urbana--Champaign, IL, USA,  mlavrov@illinois.edu.}}
\and{{Ruth Luo}}\thanks{University of Califonia, San Diego, La Jolla, CA 92093, USA and University of Illinois at Urbana--Champaign, Urbana, IL 61801, USA. E-mail: {\tt ruluo@ucsd.edu}.
Research 
is supported in part by NSF grants  DMS-1600592 and DMS-1902808.
}
\and{{Dara Zirlin}}\thanks{University of Illinois at Urbana--Champaign, Urbana, IL 61801, USA. E-mail: {\tt zirlin2@illinois.edu}.
Research 
is supported in part by Arnold O. Beckman Research Award (UIUC) RB20003.}
}

\date{\today}

\maketitle

\vspace{-0.3in}

\begin{abstract}
In the language of hypergraphs, our main result is a Dirac-type bound:
 we prove that every $3$-connected hypergraph $\cH$ with $ \delta(\cH)\geq \max\{|V(\cH)|, \frac{|E(\cH)|+10}{4}\}$
has a hamiltonian Berge cycle. 

This is sharp and refines a conjecture by Jackson from 1981 (in the language of bipartite graphs). 
Our proofs are  in the language of bipartite graphs, since the incidence graph of  each hypergraph is bipartite.


\medskip\noindent
{\bf{Mathematics Subject Classification:}}  05C35,   05C38,  05C65, 05D05.\\
{\bf{Keywords:}} Longest cycles, degree conditions, pancyclic hypergraphs.
\end{abstract}

\section{Introduction}

\subsection{Long cycles in bipartite graphs}

For positive integers $n, m,$ and $\delta$ with $\delta \leq m$, let $\cG(n,m,\delta)$ denote the set of all bipartite graphs with a partition $(X, Y)$ such that $|X| = n\geq 2, |Y|=m$ and for every $x \in X$, $d(x) \geq \delta$. In 1981, Jackson~\cite{jackson} proved that if  $ \delta\geq \max\{n,\frac{m+2}{2}\}$, then every graph $G\in \cG(n,m,\delta)$ contains a cycle of length $2n$, i.e., a cycle that covers $X$.
This result is sharp. 
 Jackson also conjectured that if  $G \in \cG(n,m,\delta)$ is 2-connected, then  the upper bound on $m$ can be weakened.

\begin{conj}[Jackson~\cite{jackson,jackson3}]\label{jacksonconj} Let $m,n,\delta$ be integers. If $\delta\geq \max\{n, \frac{m+5}{3}\}$,
then every $2$-connected graph $G \in \cG(n,m,\delta)$ contains a cycle of length $2n$.\end{conj}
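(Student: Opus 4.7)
The plan is to attempt a contradiction via a longest-cycle rotation/extension argument, in the style of Jackson's 1981 proof of the weaker bound $\delta \geq \max\{n,(m+2)/2\}$. I would suppose $G \in \cG(n,m,\delta)$ is 2-connected with $\delta \geq \max\{n,(m+5)/3\}$ and has no cycle of length $2n$, take a longest cycle $C = x_1 y_1 x_2 y_2 \ldots x_k y_k x_1$ with $x_i \in X$, $y_i \in Y$, and $k < n$, and fix a cyclic orientation on $C$.

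The key steps I would carry out, in order, are as follows. First, pick an external vertex $x \in X \setminus V(C)$ (which exists since $k < n$); since $d(x) \geq \delta \geq n > k$ while $N(x) \subseteq Y$, most of $x$'s neighbors must lie on $C$. Second, use 2-connectivity to produce two internally disjoint $x$-to-$C$ paths $P_1, P_2$ with interiors off $V(C)$, landing at distinct anchors $u_1, u_2 \in V(C)$. Third, for each $y \in N(x)\cap V(C)$, run the rotation argument: splicing $xy$ with subarcs of $C$ and with $P_1, P_2$ yields a longer cycle unless certain ``successor'' $Y$-vertices of $y$ on $C$ (relative to $u_1$ and $u_2$) are forbidden from lying in $N(x)$. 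Collect these forbidden vertices into a set $F \subseteq Y \setminus N(x)$. Fourth, count: if one can show $|F| \geq 2 d(x) - O(1)$, then $d(x) + |F| \leq m$ gives $3 d(x) \leq m + O(1)$, and for appropriately tuned constants this contradicts $\delta \geq (m+5)/3$.

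The hard part, and the step where I expect the plan to break, is producing the factor-two blow-up $|F| \geq 2 d(x) - O(1)$ using only 2-connectivity. Two disjoint paths supply two anchors on $C$, each anchor generically forbids one successor per neighbor of $x$, but the two families of forbidden successors can overlap — especially when $X \setminus V(C)$ is large or its components attach to $C$ at nearby places — in which case $|F|$ drops to $d(x) - O(1)$ and only Jackson's $(m+2)/2$ bound survives. The paper's main theorem circumvents exactly this by assuming 3-connectivity, which supplies a third internally disjoint path and restores a genuine factor-two gain (at the price of a slightly weaker $(m+10)/4$-style bound). To close the gap to the 2-connected conjecture, one would have to exploit the global structure of $X \setminus V(C)$ — perhaps by chaining rotations through external $X$-vertices to manufacture a ``virtual'' third path to $C$, or by an intricate case analysis of short ears — and it is precisely here that the conjecture has resisted proof.
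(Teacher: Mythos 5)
You have not given a proof; you say so yourself, ending with the claim that ``it is precisely here that the conjecture has resisted proof.'' That conclusion is factually wrong: the paper explicitly records that Conjecture~\ref{jacksonconj} was recently \emph{proved} in~\cite{KLZ} by (some of) the present authors, so your task was to reconstruct a proof, not to diagnose an open problem. The paper under review does not itself reprove the $2$-connected conjecture --- its new content is the $3$-connected strengthening, Theorem~\ref{jackson6} --- but the machinery it develops makes clear what is missing from your sketch, and it is more than a ``virtual third path'' or a short-ear case analysis.

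The first missing idea is that one does not optimize only over the longest cycle $C$. The paper works with a \emph{triple} $(C,x,F)$, where $F$ is an $x,C$-fan, chosen to be best under the lexicographic order of Definition~\ref{better}: maximize $|C|$, then the fan size $t(x,C)$, then $|V(F)\cap V(C)\cap Y|$, then minimize $|V(F)|$, then minimize the component of $x$ in $G-C$. The tie-breaks are not cosmetic; they are what make the rotation lemmas (e.g.\ Lemmas~\ref{T+}, \ref{neighborF0}, \ref{cross0}, \ref{cros11}, \ref{nocrossings}) go through even when a rotation produces a cycle of the \emph{same} length as $C$, which is exactly where your sketch would stall. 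The second missing idea is that the final count is not performed on $N(x)$ and a set of ``forbidden successors'' alone. Instead, one identifies a \emph{good set} $W\subseteq X\cap V(C)$ of size roughly $k$ (here $k=4$ for $3$-connectivity, $k=3$ for $2$-connectivity) consisting of on-cycle $X$-vertices near the fan endpoints whose off-cycle neighborhoods are pairwise disjoint and disjoint from $N(x)\setminus V(C)$, and whose on-cycle degrees are controlled by a crossing count (Lemmas~\ref{cros12} and~\ref{good}). Then $|Y|\geq\ell+\sum_{w\in W\cup\{x\}}|N(w)\setminus V(C)|$ with $\ell\leq\delta-1$ yields the $(k+1)\delta$ lower bound on $m$. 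The factor of $k+1$ comes from locating $k$ extra vertices with disjoint off-cycle neighborhoods, not from multiplying forbidden successors of $x$ by the number of fan paths. Your plan as stated never reaches for this second collection of vertices, which is why you (incorrectly) conclude that $2$-connectivity caps the method at Jackson's $(m+2)/2$ bound.
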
 

Recently, the conjecture was proved in~\cite{KLZ}. The restriction $\delta\geq \frac{m+5}{3}$ cannot be weakened because of the following example.

\begin{const}\label{con5}
Let $n_1 \geq n_2 \geq n_3\geq 1$ be such that $n_1 + n_2 + n_3 = n$.
 Let $G_3(n_1, n_2, n_3;\delta) \in \cG(n,3\delta-4, \delta)$ be the bipartite graph obtained from $K_{\delta-2, n_1} \cup K_{\delta-2, n_2} \cup K_{\delta-2, n_3}$  by adding two vertices $a$ and $b$ that are both adjacent to every vertex in the parts of size $n_1, n_2$, and $n_3$. Then a longest cycle in $G_3(n_1, n_2, n_3;\delta)$ has length $2(n_1+n_2) \leq 2(n-1)$.  
 \end{const}
 
 The  goal of this paper is to find a best lower bound on $\delta$ guaranteeing the existence of a $2n$-cycle in 
  a graph $G \in \cG(n,m,\delta)$ if $G$ is not only 2-connected, but $3$-connected.
 The following simple extension of Construction~\ref{con5} shows that the bound could not be larger than $ \frac{m+10}{4}$.
 
\begin{const}\label{con6}
Let $n_1 \geq n_2 \geq n_3\geq n_4\geq 1$ be such that $n_1 + n_2 + n_3 +n_4= n$.
 Let $G_4(n_1, \ldots, n_4;\delta) \in \cG(n,4\delta-9, \delta)$ be the bipartite graph obtained from $\bigcup_{j=1}^4K_{\delta-3, n_j} $  by adding $3$ vertices $a_1,a_2,a_3$, all of which are  adjacent to every vertex in the parts of size $n_1, n_2,n_3$, and $n_4$. Then a longest cycle in $G_4(n_1, \ldots, n_4;\delta)$ has length $2(n_1+n_2+n_3) \leq 2(n-1)$.  
 \end{const}
 
 \begin{figure}\label{fig:ext}
 \centering
 \includegraphics[scale=.65]{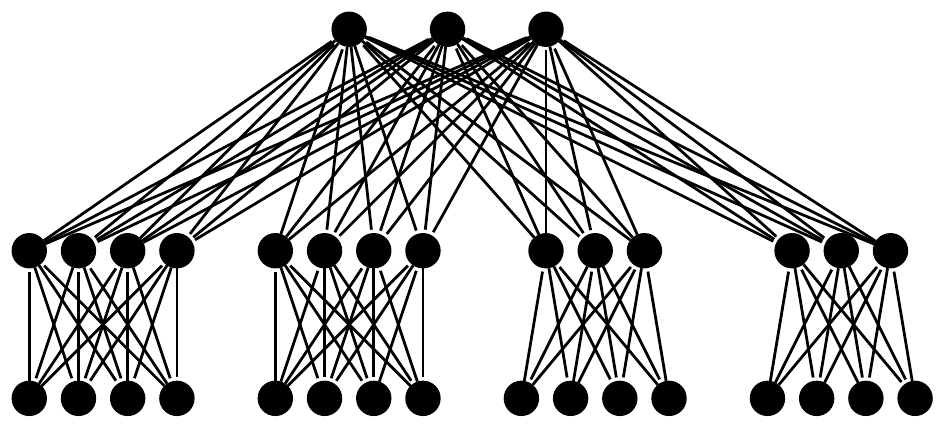}
     \caption{An example of Construction~\ref{con6}.}
 \end{figure}

The main result of the paper is that Construction~\ref{con6} is indeed extremal for $3$-connected graphs:

\begin{thm}\label{jackson6} 
Let $m,n,\delta$ be integers. If $\delta\geq \max\{n, \frac{m+10}{4}\}$,
then every $3$-connected graph $G \in \cG(n,m,\delta)$ contains a cycle of length $2n$.
\end{thm}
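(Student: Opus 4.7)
The plan is to argue by contradiction: assume $G \in \cG(n,m,\delta)$ is 3-connected with $\delta \geq \max\{n, (m+10)/4\}$ yet contains no cycle of length $2n$. Conjecture \ref{jacksonconj}, proved in \cite{KLZ}, handles the case $\delta \geq (m+5)/3$, so we may assume $m \geq 3\delta - 4$; combined with $m \leq 4\delta - 10$ this forces $\delta \geq 6$ and restricts us to the narrow regime in which genuine 3-connectivity must be exploited.

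Let $C$ be a longest cycle in $G$, with $|V(C)| = 2\ell < 2n$; among longest cycles we may in addition require $C$ to maximize $|V(C) \cap X|$. Pick $x \in X \setminus V(C)$. By Menger's theorem, there exist three internally disjoint $x$-to-$V(C)$ paths $P_1, P_2, P_3$ meeting $V(C)$ only at their endpoints $v_1, v_2, v_3$; we choose the pair $(x, \{P_i\})$ with $|P_1|+|P_2|+|P_3|$ minimum. The endpoints $v_1, v_2, v_3$ split $C$ into three arcs $A_{12}, A_{23}, A_{31}$, and for each pair $(i,j)$ the concatenation $P_i \cdot A_{ij} \cdot \overline{P_j}$ is a cycle through $x$ whose length must not exceed $2\ell$. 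Writing out these inequalities, exploiting the bipartite parity of arcs and paths, and invoking the minimality of $\sum |P_i|$, one should be able to force $|P_i| = 1$ for every $i$, so that $v_1, v_2, v_3 \in Y \cap N(x)$ are three distinct $Y$-neighbors of $x$.

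With $v_1, v_2, v_3$ now playing the role of the ``bridge'' vertices $a_1, a_2, a_3$ of Construction \ref{con6}, the next step is a structural analysis of the three arcs using the degree hypothesis. For every $X$-vertex $u$ on an arc $A_{ij}$, a careful chord-rotation argument---combining any edge from $u$ to a $Y$-vertex outside $V(A_{ij}) \cup \{v_1,v_2,v_3\}$ with one of the paths $P_k$ and segments of the remaining arcs---either produces a cycle longer than $C$ or one of the same length covering more of $X$, contradicting the choice of $C$. Thus all but at most three of $u$'s neighbors lie inside $V(A_{ij}) \cap Y$, forcing $|V(A_{ij}) \cap Y| \geq \delta - 3$ in each arc. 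Applied inside the component of $G - \{v_1, v_2, v_3\}$ containing $x$, the same argument furnishes a fourth disjoint ``block'' of at least $\delta - 3$ $Y$-vertices. Adding the three $v_i$ yields $m \geq 4(\delta-3) + 3 = 4\delta - 9$, contradicting $m \leq 4\delta - 10$.

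The main obstacle is the reduction to $|P_i|=1$ together with the verification that no $X$-vertex of an arc has a ``cross-arc'' neighbor: three internally disjoint paths open up many more potential rerouting cycles than the two-path analysis of \cite{KLZ}, and the bookkeeping required to rule them all out---in particular to exclude the possibility that some $v_i$ lies in $X$, or that the block of $X \setminus V(C)$ has nontrivial internal structure---is likely to be the most delicate part of the argument. A carefully chosen hierarchy of optimality conditions on $(C, x, P_1, P_2, P_3)$, in the spirit of the proof of the 2-connected case, should make this analysis tractable.
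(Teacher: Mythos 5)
Your high-level strategy---assume no $2n$-cycle, take a longest cycle $C$, use 3-connectedness to extract a three-path fan from some $x \in X \setminus V(C)$ to $C$, and then count degrees to force $m \geq 4\delta - 9$---does track the skeleton of the paper's argument, including the initial reduction (via the 2-connected theorem) to the range $3\delta - 4 \leq m \leq 4\delta - 10$. However, two of your intermediate claims are genuinely wrong, and the real work of the paper happens precisely where you wave your hands.

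First, you cannot force $|P_i| = 1$ for all $i$, i.e.\ you cannot arrange that all three attachment vertices of the fan lie in $Y \cap N(x)$. No optimization of the fan accomplishes this: if, say, every neighbor of $x$ lies on $C$ or is a cut vertex into $C$, some fan paths must pass through intermediate $Y$- and $X$-vertices, and the fan may well terminate at $X$-vertices of $C$. The paper explicitly allows this---its set $T = V(F) \cap V(C)$ can contain $X$-vertices, and an entire lemma (Lemma~\ref{t3r}) is devoted to the degenerate subcase $|T \cap X| \leq 1$, showing that when it occurs the component $D$ containing $x$ must be ``2-rich.'' The hierarchy of optimality conditions in Definition~\ref{better} (length, fan size, $|T \cap Y|$, $|V(F)|$, $|D|$) is designed to extract structure from this flexibility, not to eliminate it.

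Second, your claim that every $X$-vertex $u$ on an arc $A_{ij}$ has all but at most three neighbors inside $V(A_{ij}) \cap Y$ is false; an $X$-vertex on one arc can have many neighbors on the other arcs without creating a longer cycle, because routing through $x$ requires passing through a $v_k$, which may already be consumed. The paper's actual counting is carried by Lemma~\ref{good}: one must find a \emph{specific} 4-element subset $W$ of $X \cap V(C)$ (typically built from $X^+(T) \cup X^-(T)$) whose members pairwise have no common neighbor off $C$ (no CONs) and satisfy a crossing condition; only then can one sum $|N(u) - V(C)|$ over $W \cup \{x\}$ without double-counting, and Lemma~\ref{cros12} bounds on-cycle degrees via crossings. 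Establishing the existence of such a good set is where the bulk of the paper lives: Lemma~\ref{td3} shows $\widetilde{t}=3$; then Section~\ref{sec4} classifies the obstructions to goodness into short/medium/long-type configurations (pairs in $X^+(T) \cup X^-(T)$ that \emph{do} share CONs) and eliminates them one by one via rotation arguments, each producing either a longer cycle or a better triple. Your sketch has no analogue of this case analysis and no mechanism to handle CONs or crossings, so the degree-sum step would simply not close.
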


We discuss possible extensions of Theorem~\ref{jackson6} to $k$-connected bipartite graphs and hypergraphs in concluding remarks.
We will apply this theorem in a forthcoming paper on  so-called {\em super-pancyclic} bipartite graphs and hypergraphs. 
This notion was introduced and discussed in~\cite{KLZ}.

In the next section, we discuss how Theorem~\ref{jackson6} can be translated into the language of hamiltonian Berge cycles. 


\subsection{Hamiltonian Berge cycles in hypergraphs}

A {\em hypergraph} $\cH$ is a set of vertices $V(\cH)$ and a set of edges $E(\cH)$ such that each edge is a subset of $V(\cH)$. 

We consider hypergraphs with  edges of any size. The {\em degree}, $d(v)$, of a vertex $v$  is the number of edges that contain $v$. 
The {\em minimum degree} of a hypergraph $\cH$ is  $\delta(\cH) := \min_{v\in V(\cH)} d(v)$. The 
{\em co-degree} of a vertex set  $A$ 
 is the number of edges that contain  $A$.

A {\em Berge cycle} of length $\ell$ in a hypergraph is a set of $\ell$ distinct vertices $\{v_1, \ldots, v_\ell\}$ and $\ell$ distinct edges $\{e_1, \ldots, e_\ell\}$ such that $v_i, v_{i+1} \in e_i$ for every $i\in [\ell]$ (indices are taken modulo $\ell$). The vertices $\{v_1, \ldots, v_\ell\}$ are  the {\em base vertices} of the  cycle.

Naturally, a {\em Berge hamiltonian cycle} in  a hypergraph $\cH$ is a Berge cycle whose set of base vertices is $V(\cH)$.

Let $\cH = (V(\cH), E(\cH))$ be a hypergraph. The {\em incidence   graph  } of $\cH$ is the bipartite graph $I(\cH)$ with parts $(X, Y)$ where $X = V(\cH)$, $Y= E(\cH)$ such that for $e \in Y, v \in X,$ $ev\in E(I(\cH))$ if and only if the vertex $v$ is contained in the edge $e$ in $\cH$. 

If $\cH$ has $n$ vertices, $m$ edges and minimum degree at least $\delta$, then
 $I(\cH) \in \cG(n,m,\delta)$. There is a simple relation between the cycle lengths in a hypergraph $\cH$ and its incidence graph $I(\cH)$:
 If $\{v_1, \ldots, v_\ell\}$ and $\{e_1, \ldots, e_\ell\}$ form a Berge cycle of length $\ell$ in $\cH$, then $v_1 e_1 \ldots v_\ell e_\ell v_1$ is a cycle of length $2\ell$ in $I(\cH)$, and vice versa. 


For a positive integer $k$, call a hypergraph {\bf $k$-connected} if its incidence   graph   is  $k$-connected.

If one would like to prove an analog of
 Dirac's theorem on hamiltonian cycles in graphs for hamiltonian Berge cycles in hypergraphs, then the bound on the minimum degree would be exponential in $n$.
 One of the examples is the following construction from~\cite{KLZ}. 
  
\begin{const}[\cite{KLZ}]\label{con4}
Let $V(\cH)=V_1 \cup V_2$ where $|V_1| =\lceil (n+2)/2 \rceil$, $|V_2| = \lfloor (n-2)/2 \rfloor$, $V_1 \cap V_2 = \emptyset$, and  let $E(\cH)=E_1\cup E_2$, where $E_1$ is the set of all subsets $A$ of $V(\cH)$ of size $\lceil n/4\rceil$ such that
$|V_1\cap A|=1$ (and $|V_2 \cap A| = \lceil n/4 \rceil - 1$), and $E_2=\{V_1\}$. 
Then $\cH$ has an exponential in $n$  minimum degree, high connectivity and positive codegree of each pair of the vertices. But 
$\cH$ has no Berge hamiltonian cycle.
\end{const}

On the other hand, rephrasing Theorem~\ref{jackson6} in terms of hypergraphs, we get  a reasonable and sharp bound on the minimum degree in terms of  the number of vertices and edges
that provides the existence of hamiltonian Berge cycles in $3$-connected hypergraphs.

\begin{thm}\label{mainj2} Let positive integers $n,m,\delta$ be such that 
\begin{equation}\label{nm}
\mbox{\em $ \delta \geq \max\{n, \frac{m+10}{4}\}$.}
\end{equation}
 Then every $3$-connected
$n$-vertex hypergraph with $m$ edges and minimum degree at least $\delta$   has a hamiltonian Berge cycle. \end{thm}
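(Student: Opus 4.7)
The plan is to derive Theorem~\ref{mainj2} directly from Theorem~\ref{jackson6} by passing to the incidence graph $I(\cH)$, exploiting the correspondence between Berge cycles in $\cH$ and even cycles in $I(\cH)$ that is spelled out in the excerpt. No new combinatorial work should be needed: the hypergraph statement is essentially a reformulation of the bipartite graph statement.

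First I would verify that $I(\cH)$ satisfies every hypothesis of Theorem~\ref{jackson6}. By construction, $I(\cH)$ is bipartite with parts $X = V(\cH)$ and $Y = E(\cH)$, so $|X| = n$ and $|Y| = m$. For each vertex $x \in X$, the degree $d_{I(\cH)}(x)$ equals $d_{\cH}(x) \geq \delta$, so $I(\cH) \in \cG(n,m,\delta)$. Assumption~\eqref{nm} gives $\delta \geq \max\{n,(m+10)/4\}$, and $I(\cH)$ is $3$-connected by the very definition of a $3$-connected hypergraph given in the excerpt. (The condition $n \geq 2$ in the definition of $\cG(n,m,\delta)$ is harmless: if $n = 1$, then any single edge of $\cH$ containing the lone vertex is trivially a Berge hamiltonian cycle of length $1$ only in the degenerate sense, but the $3$-connectivity hypothesis rules out $n = 1$.)

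Next, Theorem~\ref{jackson6} produces a cycle $C$ of length $2n$ in $I(\cH)$. Since $I(\cH)$ is bipartite, $C$ alternates between $X$ and $Y$, so $C$ meets $X$ in exactly $n$ vertices and $Y$ in exactly $n$ vertices. As $|X| = n$, the cycle $C$ uses \emph{every} vertex of $V(\cH)$. Writing $C = v_1 e_1 v_2 e_2 \cdots v_n e_n v_1$ with $v_i \in X$ and $e_i \in Y$, the correspondence described in the excerpt (``if $\{v_1,\dots,v_\ell\}$ and $\{e_1,\dots,e_\ell\}$ form a Berge cycle of length $\ell$ in $\cH$, then $v_1 e_1 \cdots v_\ell e_\ell v_1$ is a cycle of length $2\ell$ in $I(\cH)$, and vice versa'') ensures that the $v_i$ are distinct vertices of $\cH$, the $e_i$ are distinct edges of $\cH$, and $v_i, v_{i+1} \in e_i$ for each $i$ modulo $n$. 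Hence $\{v_1,\dots,v_n\}$ together with $\{e_1,\dots,e_n\}$ is a Berge hamiltonian cycle of $\cH$.

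The main obstacle is therefore not in this reduction but in Theorem~\ref{jackson6} itself; once that is in hand, the proof of Theorem~\ref{mainj2} is the three-step translation above. The only thing worth double-checking is that the construction in Theorem~\ref{jackson6} indeed uses only the bipartite-graph hypotheses $|X|=n$, $|Y|=m$, $\delta(X) \geq \delta$, and $3$-connectedness, and not anything special about the ambient graph, so that applying it to $I(\cH)$ is legitimate --- which is the case by inspection.
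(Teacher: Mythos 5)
Your proposal is exactly the paper's approach: the paper presents Theorem~\ref{mainj2} as a direct rephrasing of Theorem~\ref{jackson6} via the incidence graph, using precisely the Berge-cycle/even-cycle correspondence you invoke, and gives no separate proof because the translation is regarded as immediate. Your write-up simply makes that translation explicit, and it is correct.
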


\subsection{Notation and outline of the proof of Theorem~\ref{jackson6} }

For a graph $G$, a cycle $C$ in $G$, and a vertex $x$ not appearing in $C$, let $t(x,C)$ denote the size of a largest $x,V(C)$-{\em fan} in $G$, i.e. the largest number  of $x,V(C)$-paths such that any two of them share only
 $x$.  Since $G$ is $3$-connected, $t(x,C)\geq 3$.


Our proof is by contradiction. We assume that for some positive integers $m,n,\delta$ with $\delta\geq \max\{n, \frac{m+10}{4}\}$,
there is a counter-example: 
 a $3$-connected $(X,Y)$-bigraph $G\in  \cG(n,m, \delta)$ with no $2n$-cycles. We study the properties of $G$.

We consider each cycle $C$ in $G$ equipped with a clockwise direction. For every vertex $u$ of $C$, $x^+_C(u)$ denotes the closest to $u$ clockwise vertex of $X$
distinct from $u$. For every vertex $u$ of $C$, $x^-_C(u)$ denotes the closest to $u$ counterclockwise vertex of $X$
distinct from $u$. For a set $U\subset V(C)$, $X_C^+(U)=\{x^+_C(u)\; : \;u\in U\}$. When $C$ is clear from the context, the subscripts could be omitted.
The vertices $ y^+(u),y^-(u)$ and the sets $X^-(U),Y^+(U),Y^-(U)$ are defined similarly.

We  consider triples $(C,x,F)$ where $C$ is a cycle, $x\in X-V(C)$ and $F$ is an $x,C$-fan. 
By $D(C,x)$ we will denote the component of $G-C$ containing $x$. By definition, $V(F)-V(C)\subseteq D(C,x)$.

\begin{definition}\label{better}
A triple $(C,x,F)$
{\bf is better than}  a triple $(C',x',F')$ if
\begin{enumerate}
\item[(a)] $|C|>|C'|$, or
\item[(b)] $|C|=|C'|$ and $t(x,C)>t(x',C')$, or
\item[(c)] $|C|=|C'|$, $t(x,C)=t(x',C')$, and $|V(F)\cap V(C)\cap Y|>|V(F')\cap V(C')\cap Y|$, or
\item[(d)] $|C|=|C'|$, $t(x,C)=t(x',C')$,  $|V(F)\cap V(C)\cap Y|=|V(F')\cap V(C')\cap Y|$,  and $|V(F)|<|V(F')|$, or
\item[(e)] $|C|=|C'|$, $t(x,C)=t(x',C')$,  $|V(F)\cap V(C)\cap Y|>|V(F')\cap V(C')\cap Y|$,  $|V(F)|=|V(F')|$ and $|V(D(C,x))|<|V(D(C',x))|$.
\end{enumerate}
\end{definition}

Choose a best triple $(C,x,F)$.
Let \begin{eqnarray*}
2\ell=|C|, &  t=t(x,C), & T=T(C,x,F)=V(F)\cap V(C),\\
& t_X=|T\cap X|,&   t_Y=|T\cap Y|.
 \end{eqnarray*}
Similarly, let $\widetilde{T}=\widetilde{T}(C,x)$ be the set of all vertices of $C$ adjacent to a vertex of $D(C,x)$, and let $\widetilde{t}=\widetilde{t}(C,x)=|\widetilde{T}|$.
By definition, $\widetilde{T}\supseteq T$ and $\widetilde{t}\geq t$.
Viewing $F$ as a tree (spider) with root $x$, any two vertices $u,v\in V(F)$ define the unique $u,v$-path $F[u,v]$ in $F$. For $u,v\in V(C)$, let $C[u,v]$ be  the clockwise $u,v$-path in $C$ and let $C^-[u,v]$ be  the counterclockwise $u,v$-path in $C$.
If $D=D(C,x)$ and $u,v\in D\cup\widetilde{T}(C,x)$, then let $P_D[u,v]$ be a longest $u,v$-path all of whose internal vertices are in~$D$.

We will analyze the properties of best triples $(C,x,F)$ and in all cases will come to a contradiction, either by finding a better triple or by proving that $m\geq  4\delta-9$.
For this, we will try to construct so called {\em good subsets} $W$ of $X\cap T$, defined later, such that total neighborhood of $W\cup \{x\}$ will be too large.
One feature of a good set will be that no two members of such set have a common neighbor outside of $C$, {\bf CON} for short. 

In the next section we prove basic properties of our best triple $(C,x,F)$. Then in Section~\ref{st3} we show that 
$t=\widetilde{t}=3$. 
Since $G$ is $3$-connected, this means that {\em for every} $x'\in X-C$, $t(x',C)=3$.
In Section~\ref{rc}, we discuss
special types of components of $G-C$ and possibilities to choose a triple $(C,x,F)$ with $x$ in such a component. 
After that we consider $T=T(C,x,F)$ and try to find 
 a $4$-element good subset of  the 
set $A=X^+(T)\cup X^-(T)$. The main obstacles will be that some members of $A$ have many common neighbors, in particular, CONs. 
Section~\ref{sec4} is devoted
to the case analysis of different types of such CONs. We conclude the paper with some comments.

\section{Preliminary lemmas}

\begin{lem}\label{tX} The following inequalities always hold:
\begin{center}(i) $\ell\geq t+t_X$; \qquad (ii) $|X|-\ell+t_X\geq 3$; \qquad
(iii) $|X|\geq t+3$.\end{center}
\end{lem}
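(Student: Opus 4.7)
The plan is to establish (i) from the maximality of $|C|$ via a rotation argument, derive (ii) by combining maximality of $t$ with $3$-connectivity (which supplies $\delta(G)\geq 3$), and obtain (iii) as an algebraic consequence of (i) and (ii).

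For (i), I would enumerate the fan paths as $P_1,\ldots,P_t$ with endpoints $u_1,\ldots,u_t\in T$ listed in the clockwise cyclic order on $C$. Bipartite parity together with $x\in X$ forces $|P_i|\geq 2$ whenever $u_i\in X$ and $|P_i|\geq 1$ whenever $u_i\in Y$, so $\sum_i|P_i|\geq 2t_X+t_Y=t+t_X$. For each consecutive pair $(u_i,u_{i+1})$, the cycle obtained from $C$ by replacing the arc $C[u_i,u_{i+1}]$ with the concatenation $P_i\cup P_{i+1}$ joined at $x$ has length $|C|-|C[u_i,u_{i+1}]|+|P_i|+|P_{i+1}|$; the maximality of $|C|$ (part (a) of ``better'') then forces $|C[u_i,u_{i+1}]|\geq |P_i|+|P_{i+1}|$. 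Summing cyclically yields $2\ell=\sum_i|C[u_i,u_{i+1}]|\geq 2\sum_i|P_i|\geq 2(t+t_X)$, which is (i).

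For (ii), I would argue by contradiction. Suppose $|X|-\ell+t_X\leq 2$, so that $A:=(X\setminus V(C))\cup(T\cap X)$ has size at most $2$; since $x\in A$ and $D\cap X\subseteq A$, the component $D=D(C,x)$ contains at most two $X$-vertices, namely $x$ and possibly one further vertex $x'$. This severely restricts $F$: each fan path has length $1$, $2$, or $3$, and length $3$ is possible only when $x'\in D$ (then passing through $x'$), so $F$ absorbs at most one $Y$-neighbor of $x$ from $Y\setminus V(C)$. Combining $d(x)\geq\delta\geq n$ with the bound $|N(x)\cap V(C)\cap Y|\leq\ell\leq n-|X\setminus V(C)|$---and, when $t_X=1$, an additional rotation argument of the type used in (i) showing that $y_C^\pm(v)\notin T$ for the unique $v\in T\cap X$---I would conclude that $x$ has a $Y$-neighbor $y\in(Y\setminus V(C))\setminus V(F)$. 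Because $\delta(G)\geq 3$, this $y$ has at least two non-$x$ neighbors, and all of them lie in $\{x'\}\cup(V(C)\cap X)$, so at least one such neighbor $v$ lies in $(V(C)\cap X)\setminus T$. The path $x$-$y$-$v$ is then internally disjoint from $F$ with a new endpoint, so $F\cup\{x\text{-}y\text{-}v\}$ is a fan of size $t+1$, contradicting the maximality of $t$ in the best triple (part (b) of ``better'').

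Part (iii) is then immediate: by (i), $\ell-t_X\geq t$, and by (ii), $|X|\geq\ell-t_X+3$, so $|X|\geq t+3$. The main obstacle is the bookkeeping in (ii): in each sub-case, indexed by the small values of $|X\setminus V(C)|$ and $t_X$, one must verify that the count of $x$'s $Y$-neighbors off $V(C)$ actually leaves a free choice after accounting for those used internally by $F$, and that the chosen $v$ genuinely lies outside $T$ (so the new fan is larger rather than a rearrangement of the old one).
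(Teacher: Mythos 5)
Your proof is correct, but both (i) and (ii) are established by genuinely different arguments than the paper uses. For (i), the paper instead observes that whenever $w\in T\cap X$, none of $y^+(w),y^-(w),x^+(w),x^-(w)$ can lie in $T$ (otherwise replacing a short arc by the $F$-path through $x$ lengthens $C$), which gives $t_X\le\ell/2$ and $t_Y\le\ell-2t_X$ directly. Your cyclic-sum over consecutive fan paths $P_i,P_{i+1}$ is an equally valid rotation argument and arrives at the same bound; the paper's version is a touch more local and incidentally yields the sharper statements $t_X\le\ell/2$ and $y^\pm(w)\notin T$, the latter of which you end up re-deriving anyway inside your proof of (ii). For (ii), the paper does not argue by contradiction: it takes the same $y\in N(x)\setminus(V(C)\cup V(F))$, then invokes $3$-connectivity to produce a size-$2$ $y,C$-fan $F'$ in $G-x$, and observes that the two initial $X$-vertices $x',x''$ of $F'$ must each lie in $(X\setminus V(C))\cup(T\cap X)$ (if on $C$, maximality of $t$ forces membership in $T$), giving three distinct vertices $x,x',x''$ in that set and hence $|X|-\ell+t_X\ge 3$ outright. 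Your route instead assumes the bound fails, deduces $|D\cap X\setminus\{x\}|+t_X\le 1$, and uses only the consequence $\delta(G)\ge 3$ of $3$-connectivity to find a neighbor $v$ of $y$ in $(V(C)\cap X)\setminus T$, extending the fan; this works, and your worry about the ``bookkeeping'' is justified but resolvable (indeed, the cleaner bound $|N(x)\cap V(C)|\le t_Y$ by fan-maximality avoids the $t_X=1$ patch entirely). The paper's direct two-path argument is shorter and avoids the case split, but your version has the mild conceptual advantage of needing only the minimum-degree consequence of $3$-connectivity at this stage. Part (iii) is the same trivial combination of (i) and (ii) in both.
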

\begin{proof}
If $w\in T\cap X$ and $y^+(w)\in T$, then the cycle $wF[w,y^+(w)]y^+(w)C[y^+(w),w]w$ is longer than $C$, a contradiction. Similarly,
$y^-(w),x^+(w),x^-(w)\notin T$. Thus, $t_X\leq \ell/2$ and $t_Y\leq \ell-2t_X$. This proves~(i).

Since $\delta\geq |X|\geq \ell+1\geq t+1= d_F(x)+1$, there is $y\in N(x)-N_F(x)$. By (d) in the definition of $(C,x,F)$, $y\notin V(F)$.
By the maximality of $t$, $y\notin V(C)-V(F)$.
Since $G$ is $3$-connected, $G-x$ has a $y,C$-fan $F'$ of size $2$. Let $x',x''$ be the neighbors of $y$ in $F'$. If, say $x'\in V(C)$, then by the maximality
of $t$, $x'\in T$. Thus $\{x,x',x''\}\subset (X-V(C))\cup (T\cap X)$. This yields (ii). Now (i) and (ii) together imply (iii).
\end{proof}

\begin{lem}\label{T+} If $w\in \widetilde{T}\cap X$, then
\begin{enumerate}
\item[(i)] $y^+(w)\notin \widetilde{T}$ and
\item[(ii)] $y^+(w)$ has no neighbors in $ X^+(\widetilde{T})-x^+(w)$.
\end{enumerate}
\end{lem}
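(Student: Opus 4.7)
The plan is to prove both parts by contradiction against the optimality of the best triple $(C,x,F)$ (Definition~\ref{better}): assuming the stated conclusion fails, I will produce a better triple by re-routing a portion of $C$ through the connected component $D=D(C,x)$.

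For (i), assume $y^+(w)\in\widetilde{T}$. Then I can pick a neighbor $v\in D$ of $w$ (automatically $v\in Y$) and a neighbor $v'\in D$ of $y^+(w)$ (automatically $v'\in X$), so $v\ne v'$. Connectedness of $D$ supplies a $v,v'$-path $P$ inside $D$ of length at least $1$. Replacing the single edge $w\,y^+(w)$ on $C$ by the walk $w,v,P,v',y^+(w)$ (length $\geq 3$) yields a strictly longer cycle, contradicting clause~(a).

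For (ii), suppose $y^+(w)$ has a neighbor $z=x^+(u)\in X^+(\widetilde{T})\setminus\{x^+(w)\}$. The identity $x^+(y^+(w))=x^+(w)$ combined with $z\ne x^+(w)$ rules out $u\in\{w,y^+(w)\}$. I would then pick neighbors $v_w\in D$ of $w$ and $v_u\in D$ of $u$, take a $v_w,v_u$-path $P\subset D$ of length $p$, and splice together (i)~the walk $w,v_w,P,v_u,u$ through $D$, (ii)~the counterclockwise arc of $C$ from $u$ back to $y^+(w)$, (iii)~the chord $y^+(w)\,z$, and (iv)~the clockwise arc of $C$ from $z$ back to $w$, obtaining a cycle $C'$. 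A direct length count gives $|C'|=|C|+p$ if $u\in X$ and $|C'|=|C|+p+1$ if $u\in Y$. When $u\in Y$ bipartite parity forces $v_w\ne v_u$, so $p\geq 1$ automatically; when $u\in X$ and one can arrange $v_w\ne v_u$, parity forces $p\geq 2$. In either of these situations $|C'|>|C|$ contradicts clause~(a).

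The hard part will be the residual subcase $u\in X$ with $v_w=v_u=v$ forced (i.e., $w$ and $u$ share a unique common neighbor $v\in D$ and have no other $D$-neighbors to separate). Here $p=0$ and $|C'|=|C|$, so clause~(a) is unavailable. My plan in this case is to invoke the finer tie-breaking clauses: since $v$ now lies on $C'$ while $v$ was in $D(C,x)$, we have $|V(D(C',x))|<|V(D(C,x))|$; and since $v$ is reachable from $x$ inside $D$, one can try to build a fan $F'$ from $x$ to $V(C')$ using $v$ as an extra endpoint, pushing $t(x,C')$ above $t$. If that succeeds, clause~(b) already beats $(C,x,F)$; otherwise a careful choice of $F'$ should preserve $t$, $|V(F)\cap V(C)\cap Y|$, and $|V(F)|$ so that the strict drop in $|V(D)|$ activates clause~(e). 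Verifying that one of these tie-breaking clauses must fire is the most delicate step of the argument.
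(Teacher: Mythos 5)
Your part (i) is correct and matches the paper's argument. Your part (ii) builds the same cycle: the paper's $C' = wC^-[w,u]uy^+(w)C[y^+(w),v]vP_D[v,w]w$ is your cycle traversed in reverse, with the paper's $u,v$ playing the roles of your $z,u$. Your length accounting is right, and you correctly isolate the residual case as $u\in X$ with $N(w)\cap D=N(u)\cap D=\{v\}$, which forces $|C'|=|C|$.

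The gap is in the residual case, precisely the step you flag as "delicate" but do not carry out. Your plan to choose an $F'$ that "preserves $t$, $|V(F)\cap V(C)\cap Y|$, and $|V(F)|$" so that clause (e) fires works only when $v\notin V(F)$: then $F'=F$ is still a legitimate $x,C'$-fan (no branch passes through the new cycle vertex $v$) and you win by (e), or by (b) if $t(x,C')$ jumps. But if $v\in V(F)$, then $v$ is an \emph{internal} vertex of some branch of $F$ that now lies on $C'$, so $F$ is not an $x,C'$-fan at all and $|V(F)|$ cannot be preserved. The paper's fix is to truncate that branch at $v$: set $F'=F-E(F[v,u_i])$ where $u_i\in T$ is the $C$-endpoint of the branch through $v$. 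This keeps $t$ branches, and one now ends at the $Y$-vertex $v$ instead of $u_i$. If $u_i\in X$, the $Y$-endpoint count $|V(F')\cap V(C')\cap Y|$ strictly increases and clause (c) fires; if $u_i\in Y$, that count ties but $|V(F')|<|V(F)|$ and clause (d) fires. Your alternative suggestion of enlarging the fan to push $t(x,C')$ above $t$ via clause (b) is not guaranteed to succeed (if $v\in V(F)$ the candidate new branch collides with an existing one) and is unnecessary once the truncation argument is in place.
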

\begin{proof} If $y^+(w)$ has a neighbor in $D=D(C,x)$, then the cycle $wP_D[w,y^+(w)]y^+(w)C[y^+(w),w]w$ is longer than $C$. This contradiction proves (i).

Suppose $y^+(w)u\in E(G)$ for some $u\in X^+(\widetilde{T})-x^+(w)$. Let $u=x^+(v)$ for $v\in \widetilde{T}-w$. Consider the cycle $C'=wC^-[w,u]uy^+(w)C[y^+(w),v]vP_D[v,w]w$. Then $C'$ is longer than $C$, unless $v\in X$ and $v$ and $w$ have a common neighbor $y$ in $D$. In the last case, $|C'|=|C|$ and the only vertex in $V(C)-V(C')$ is $y^+(v)$ which by (i) does not have neighbors in $D$. Define an $x,C'$-fan $F'$ as follows. If $y\notin V(F)$, then let $F'=F$. If $y\in V(F)$, say $y\in F[x,u_i]$ for some $u_i\in T$, then let 
$F'=F-E(F[y,u_i])$. In both cases, since $y^+(v)$  does not have neighbors in $D(C',x)\subset D$,
 the triple $(C',x,F')$ is better than $(C,x,F)$: if $y\notin V(F)$, then by (e), otherwise either by (c) or by (d).
\end{proof}

\begin{lem}\label{neighborF0}
If $x_1\in X^+(\widetilde{T})$, then $x_1$ cannot have a neighbor in $D=D(C,x)$, i.e., $x_1\notin \widetilde{T}$.
\end{lem}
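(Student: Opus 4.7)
The plan is to argue by contradiction: assume $x_1 := x^+(w)$ for some $w \in \widetilde{T}$ has a neighbor in $D := D(C,x)$. Then both $w$ and $x_1$ are adjacent to $D$, so the $w, x_1$-path $P := P_D[w, x_1]$ with interior in $D$ is defined and nontrivial. I will use $P$ either to lengthen $C$ (contradicting the maximality of $|C|$) or, when that fails, to produce a new cycle $C'$ of the same length and an $x, C'$-fan $F'$ so that $(C', x, F')$ beats $(C, x, F)$ in the ordering of Definition~\ref{better}.

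Split on the type of $w$. If $w \in Y$, then $w$ and $x_1$ are consecutive on $C$ with $wx_1 \in E(C)$, and $|P|$ is odd and at least $3$; replacing the single edge $wx_1$ in $C$ by $P$ yields a cycle longer than $C$. If $w \in X$, then $y^+(w)$ is the unique vertex strictly between $w$ and $x_1$ on $C$, and Lemma~\ref{T+}(i) gives $y^+(w) \notin \widetilde{T}$, so $y^+(w)$ has no neighbor in $D$; then $|P|$ is even and at least $2$, and if $|P| \ge 4$, replacing the length-two segment $w, y^+(w), x_1$ of $C$ by $P$ again produces a longer cycle.

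The only remaining case is $w \in X$ and $|P| = 2$: $w$ and $x_1$ share a common neighbor $z \in Y \cap D$. Set $C' := w z x_1 C[x_1, w]$; then $|C'| = |C|$ and $V(C') = (V(C)\setminus\{y^+(w)\}) \cup \{z\}$. Since $y^+(w)$ has no neighbor in $D$, one has $D(C', x) \subseteq D \setminus \{z\}$, so $|V(D(C', x))| < |V(D(C, x))|$ strictly. I now produce an $x, C'$-fan $F'$ making $(C', x, F')$ beat $(C, x, F)$; since any such $F'$ I construct will have size $t$, one has $t(x, C') \ge t$, so either $t(x, C') > t$ gives the contradiction via clause~(b), or $t(x, C') = t$ and I check (c)--(e). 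If $z \notin V(F)$, take $F' := F$: by Lemma~\ref{T+}(i), $y^+(w) \notin T$, so $T \subseteq V(C')$; the interior $V(F)\setminus V(C)$ lies in $D \setminus \{z\} \subseteq V \setminus V(C')$ and is connected to $x$, hence is contained in $D(C', x)$; the first four parameters of Definition~\ref{better} agree, but $|V(D(C', x))| < |V(D(C, x))|$, so clause~(e) gives the contradiction. If instead $z \in V(F)$, then $z$ must be internal in $F$, lying on the $x, u_i$-path for some $u_i \in T$; truncating $F$ at $z$ yields $F'$ with $|V(F')| < |V(F)|$ and endpoint set $(T \setminus \{u_i\}) \cup \{z\}$, where $z \in Y$. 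If $u_i \in Y$, the $Y$-intersection is unchanged and clause~(d) applies; if $u_i \in X$, the $Y$-intersection strictly grows and clause~(c) applies.

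The main technical hurdle is the degenerate case $|P|=2$, where the rerouting $C \to C'$ does not lengthen the cycle, so the contradiction must be pulled out of strictly later clauses of Definition~\ref{better}. This forces a delicate case split on whether $z$ lies in $V(F)$ and, if so, on the part of $V(C)$ containing the truncated endpoint $u_i$, in order to route the argument into the correct clause (c), (d), or (e).
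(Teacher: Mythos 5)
Your proof is correct and follows essentially the same route as the paper: lengthen $C$ via a path through $D$ connecting $w$ and $x_1$, and in the degenerate case (a length-$2$ path, forcing $w\in X$) reroute to a same-length cycle $C'$ missing $y^+(w)$, then beat $(C,x,F)$ in clause (c), (d), or (e) of Definition~\ref{better} depending on whether $z\in V(F)$ and, if so, on whether the truncated endpoint $u_i$ lies in $X$ or $Y$. The only cosmetic differences are that you parameterize by the $w,x_1$-path $P_D[w,x_1]$ rather than the paper's $z,y'$-path, and you explicitly note that if $t(x,C')>t$ then clause (b) already applies (the paper simply asserts $t(x,C')=t(x,C)$); your version is if anything slightly more careful.
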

\begin{proof}
Suppose $x_1$ has a neighbor $y'$ in $D$. Let $u_1\in \widetilde{T}$ be such that $x_1=x^+(u_1)$ and $z$ be a neighbor of $u_1$ in $D$. Let $P$ be
a $z,y'$-path in $D$ and the cycle $C'$ be defined by  $C'=x_1C[x_1,u_1]u_1zPy'x_1$. If $y'\neq z$, then $C'$
 is longer than $C$ and we are done. Thus $z=y'$ and hence $u_1\in X$.
  In this case  $C'$ and $C$ have the same length and $t(x,C')=t(x,C)$. As in the proof of Lemma~\ref{T+}(ii), if $y'\notin V(F)$, then let $F'=F$. If $y'\in V(F)$, say $y'\in F[x,u_i]$ for some $u_i\in T$, then let 
$F'=F-E(F[y',u_i])$. In both cases, since by  Lemma~\ref{T+}(i), $y^+(u_1)$  does not have neighbors in $D(C',x)\subset D$,
 the triple $(C',x,F')$ is better than $(C,x,F)$: if $y'\notin V(F)$, then by (e), otherwise either by (c) or by (d).
\end{proof}

Given a cycle $C$ and distinct $x_1,x_2,x_3\in X\cap V(C)$, we say that {\em $x_1$ and $x_2$ cross at $x_3$} if the cyclic order is $x_1,x_3,x_2$ and 
$x_1y^+(x_3),x_2y^-(x_3)\in E(G)$ or if the cyclic order is $x_1,x_2,x_3$ and 
$x_1y^-(x_3),x_2y^+(x_3)\in E(G)$. In this case, we also say that {\em $x_3$ is crossed by $x_1$ and $x_2$}.

\bigskip
\begin{lem}\label{neighborF1}
Suppose that $x_1,x_2\in X^+( \widetilde{T})$, cross at $x_3\in X\cap V(C)$. Then $x_3\notin  \widetilde{T}$.
\end{lem}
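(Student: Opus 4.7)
My plan is to argue by contradiction. Assume $x_3\in\widetilde{T}$ and pick $y'\in N(x_3)\cap D$, where $D=D(C,x)$. By the symmetry between the two crossing configurations, I may assume the cyclic order on $C$ is $x_1,x_3,x_2$ with crossing edges $x_1y^+(x_3), x_2y^-(x_3)\in E(G)$. Let $u_1\in\widetilde{T}$ satisfy $x_1=x^+(u_1)$, pick $z\in N(u_1)\cap D$, and let $P$ be a $z,y'$-path inside the connected component~$D$.

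I would then consider the cycle
\[
C' \;=\; u_1\,C^-[u_1,y^+(x_3)]\,y^+(x_3)\,x_1\,C[x_1,x_3]\,x_3\,y'\,P\,z\,u_1,
\]
namely: traverse $C$ counterclockwise from $u_1$ to $y^+(x_3)$, use the crossing edge to $x_1$, traverse $C$ clockwise from $x_1$ to $x_3$, then return to $u_1$ via $x_3y'$, the $D$-path $P$, and the edge $zu_1$. The two arcs on $C$ are internally disjoint because they cover complementary portions of $C$; the only vertex of $C$ missed is $y^+(u_1)$, and only when $u_1\in X$. Thus $|V(C')|=|V(C)|+|V(P)|$ if $u_1\in Y$, and $|V(C')|=|V(C)|-1+|V(P)|$ if $u_1\in X$. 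Unless $u_1\in X$ and $z=y'$, this gives $|C'|>|C|$, contradicting the maximality of $|C|$ forced by the choice of best triple.

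For the remaining subcase $u_1\in X$ and $z=y'$ (so $u_1,x_3$ share a neighbor $z\in D\cap Y$ and $|C'|=|C|$), I would produce a fan $F'$ and show $(C',x,F')$ is strictly better than $(C,x,F)$. Lemma~\ref{T+}(i) yields $y^+(u_1)\notin\widetilde{T}$, and since $T\subseteq\widetilde{T}$, also $y^+(u_1)\notin V(F)$; thus removing $y^+(u_1)$ from $V(C)$ does not affect $F$. If $z\notin V(F)$, set $F'=F$: criteria (a)--(d) become equalities, and (e) strictly improves because $z\in V(C')$ removes $z$ from $D(C',x)$, while $y^+(u_1)$ has no neighbor in $D$ (again by Lemma~\ref{T+}(i)) and hence cannot rejoin the component of $x$ in $G-C'$. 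If $z\in V(F)$, then $z$ is an internal vertex of a unique fan-path $P_i$ with endpoint $v_i\in T$; truncating $P_i$ at $z$ yields $F'$. If $v_i\in X$, the $X$-endpoint $v_i$ is replaced by the $Y$-endpoint $z$, so criterion~(c) strictly improves; if $v_i\in Y$, then (a)--(c) are equalities while truncation strictly shrinks $|V(F)|$, so criterion~(d) strictly improves.

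The main obstacle is this final subcase: one must keep careful track of the modifications $F\to F'$ and $D\to D(C',x)$ to obtain strict improvement under exactly one of (c)--(e), and Lemma~\ref{T+}(i) must be invoked both to carry $F$ across to $C'$ and to prevent $y^+(u_1)$ from inflating $D(C',x)$. The other crossing configuration (cyclic order $x_1,x_2,x_3$, edges $x_1y^-(x_3),x_2y^+(x_3)$) is handled by the symmetric construction with $x_1,u_1$ replaced by $x_2,u_2$.
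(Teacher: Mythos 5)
Your proof is correct and follows essentially the same approach as the paper: you construct the same longer/equal cycle $C'$ (merely traversed in the opposite direction), observe that the only remaining case is $u_1\in X$ with $z=y'$, and then split into $z\notin V(F)$ (improve by criterion (e)) and $z\in V(F)$ (truncate the fan and improve by (c) or (d)), invoking Lemma~\ref{T+}(i) exactly as the paper does to control $y^+(u_1)$.
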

\begin{proof}
Suppose that the cyclic order is $x_1,x_3,x_2$ and  $x_1y^+(x_3),x_2y^-(x_3)\in E(G)$ (the other case is symmetric). Let $y$ be a neighbor of $x_3$ in $D$. 
Let $u_1\in  \widetilde{T}$ be such that $x_1=x^+(u_1)$ and $z$ be a neighbor of $u_1$ in $D$. Let $P$ be
a $z,y$-path in $D$ and the cycle $C'$ be defined by 
\[
	C' := x_1y^+(x_3)C[y^+(x_3),u_1]u_1zPyx_3C^-[x_3,x_1]x_1.
\]
If $y\neq z$, then $C'$
 is longer than $C$ and we are done. Thus $z=y$. In this case,  $C'$ and $C$ have the same length and $t(x,C')=t(x,C)$. 
 As in the proof of Lemma~\ref{T+}(ii), if $y\notin V(F)$, then let $F'=F$. If $y\in V(F)$, say $y\in F[x,u_i]$ for some $u_i\in T$, then let 
$F'=F-E(F[y,u_i])$. Again as in the proof of Lemma~\ref{T+},  the triple $(C',x,F')$ is better than $(C,x,F)$.
\end{proof}

Recall that for two vertices in $G$,  CON means ``a common neighbor outside of $C$."

\medskip
\begin{lem}\label{cross0} Suppose that $x_1,x_2\in X^+( \widetilde{T})$. Then
\begin{enumerate}
\item[(i)] $x_1$ and $x_2$ have no CON;
\item[(ii)] neither of $x_1$ and $x_2$ has a CON with $x$.
\end{enumerate}
\end{lem}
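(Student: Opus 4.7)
The plan is to dispatch part (ii) immediately via Lemma~\ref{neighborF0}, and to attack part (i) by constructing a modified cycle $C'$ that either strictly beats $|C|$ or coincides with $|C|$ but yields a triple strictly better than $(C,x,F)$ under Definition~\ref{better}. For (ii): if some $x_i\in\{x_1,x_2\}$ shared a common neighbor $y\notin V(C)$ with $x$, then because $y$ is adjacent to $x\in D=D(C,x)$ and lies outside $C$, we would have $y\in D$, making $x_i$ adjacent to $D$, i.e.\ $x_i\in\widetilde{T}$; this contradicts $x_i\in X^+(\widetilde{T})$ together with Lemma~\ref{neighborF0}. For part (i), let $y\notin V(C)$ be a CON of $x_1,x_2$; the same appeal to Lemma~\ref{neighborF0} handles the case $y\in D$, so henceforth $y$ lies in a component $D'\ne D$ of $G-C$.

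Write $x_i=x^+(u_i)$ with $u_i\in\widetilde{T}$; since $x_1\ne x_2$ we have $u_1\ne u_2$. Pick $z_i\in N(u_i)\cap D$ and let $P=P_D[z_1,z_2]$. WLOG the clockwise cyclic order on $C$ is $u_1,x_1,u_2,x_2$, and denote the four resulting arcs by $A_1,\dots,A_4$, so $|A_j|\in\{1,2\}$ with $|A_j|=2$ iff the corresponding $u_i\in X$. Consider
\[
   C' := x_1\, y\, x_2\, C[x_2, u_1]\, u_1\, z_1\, P\, z_2\, u_2\, C^-[u_2, x_1]\, x_1.
\]
All vertices listed are distinct (the two $C$-arcs are disjoint, $y\in D'$, and $P$ is internal to $D$), and a direct count gives $|C'|=|A_2|+|A_4|+|P|+4$, so $|C'|-|C|=|P|+4-|A_1|-|A_3|$. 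Since $|A_1|+|A_3|\le 4$, we obtain $|C'|>|C|$ (contradicting maximality of $|C|$) unless $u_1,u_2\in X$ \emph{and} $z_1=z_2=:z$ (so $|P|=0$). This common-neighbor case is the only delicate one.

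In that equality case, I will produce a fan $F'$ witnessing that $(C',x,F')$ is better than $(C,x,F)$. The key observation is that $V(C)\setminus V(C')=\{y^+(u_1),y^+(u_2)\}\subseteq Y$, and by Lemma~\ref{T+}(i) both of these lie outside $\widetilde{T}$ and hence outside $T$, so every endpoint of $F$ already belongs to $V(C')$. Moreover $y\in D'\ne D$ forces $y\notin V(F)$, while internal vertices of $F$ lie in $D\setminus\{x\}$ and $V(C')\cap D=\{z\}$. Accordingly: if $z\notin V(F)$, take $F':=F$, so the $Y$-count of endpoints and $|V(F')|$ are unchanged, but $V(D(C',x))\subseteq V(D)\setminus\{z\}$ is strictly smaller, triggering criterion (e). If instead $z\in V(F)$ lies on the $F$-path $F[x,u_i^*]$, set $F':=F-E(F[z,u_i^*])$, making $z$ a new endpoint; criterion (c) then fires when $u_i^*\in X$ (the $Y$-count of endpoints grows by one) and criterion (d) fires when $u_i^*\in Y$ (same $Y$-count, but $|V(F')|<|V(F)|$ since $u_i^*$ was dropped). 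In every subcase, if $t(x,C')>t$ one simply invokes (b) with any larger fan. The main obstacle is this bookkeeping in the equality case: one must simultaneously control the $Y$-count of endpoints of $F'$, the total size $|V(F')|$, and the size of the $x$-component of $G-C'$, in order to land on the appropriate rung of the (a)--(e) hierarchy.
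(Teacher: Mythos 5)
Your proof is correct and follows essentially the same route as the paper: dispatch (ii) via Lemma~\ref{neighborF0}, then for (i) build the rerouted cycle through $y$ and $P_D[u_1,u_2]$, observe it is strictly longer unless $u_1,u_2\in X$ share a single neighbor $z$ in $D$, and in that equality case construct $F'$ (either $F$ itself or $F-E(F[z,u_{i^*}])$) and land on criterion (c), (d), or (e) of Definition~\ref{better}. Your bookkeeping in the equality case is even a bit more careful than the paper's, and you correctly cite Lemma~\ref{T+}(i) for $y^+(u_1),y^+(u_2)\notin\widetilde T$ where the paper's text misattributes this to Lemma~\ref{neighborF0}(i).
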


\begin{proof} Part (ii) follows from Lemma~\ref{neighborF0}. So, suppose $x_1$ and $x_2$ have a CON $y$, and
 $u_1,u_2\in  \widetilde{T}$ are such that $x_1=x^+(u_1)$ and $x_2=x^+(u_2)$.
 By Lemma~\ref{neighborF0}, $y\notin D$. Consider the cycle 
\[
	C' := x_1C[x_1,u_2]u_2P_D[u_2,u_1]u_1C^-[u_1,x_2]x_2yx_1.
\]
Cycle  $C'$ is longer than $C$, unless $u_1,u_2\in X$ and have a common neighbor $y'$ in $D$. In the last case, $|C'|=|C|$ and the only vertices in $V(C)-V(C')$ are $y^+(u_1)$ and $y^+(u_1)$
which by Lemma~\ref{neighborF0}(i) do not have neighbors in $D$. Define an $x,C'$-fan $F'$ as follows. If $y'\notin V(F)$, then let $F'=F$. If $y'\in V(F)$, say $y'\in F[x,u_i]$ for some $u_i\in T$, then let 
$F'=F-E(F[y',u_i])$. In both cases, since $y^+(u_1)$ and $y^+(u_2)$  do not have neighbors in $D(C',x)\subset D$,
 the triple $(C',x,F')$ is better than $(C,x,F)$: if $y\notin V(F)$, then by (e), otherwise either by (c) or by (d).
\end{proof}

\begin{lem}\label{cross1} Suppose $u_1,u_2\in \widetilde{T}$ are such that the path $P_D[u_1,u_2]$ contains an internal vertex in~$X$.
If $x_1=x^+(u_1)$ and $x_2=x^+(u_2)$ cross at $x_3\in X\cap V(C)$, then
\begin{enumerate}
\item[(i)] $x_3\notin \widetilde{T}$ and if  $x_3=x^+(u)$ where $u\in \widetilde{T}$, then $u\in Y$;
\item[(ii)] $G$ has a cycle $C'$ containing $(X\cap V(C)-x_3)\cup (X\cap P_D[u_1,u_2])$ such that  $|C'|\geq |C|$;
\item[(iii)] $x_3$ has no CON with any vertex in the set $\{x\}\cup X^+(T)$;
\item[(iv)] $x_3$ has at most $t$ neighbors on $C$.
\end{enumerate}
\end{lem}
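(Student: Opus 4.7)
The plan is to handle the four claims in turn by constructing the cycle $C'$ of part~(ii) and leveraging the optimality of the triple $(C,x,F)$ via Definition~\ref{better}.

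For part~(i), the assertion $x_3\notin\widetilde{T}$ is immediate from Lemma~\ref{neighborF1} applied to $x_1,x_2\in X^+(\widetilde{T})$ crossing at $x_3$. For the second assertion, suppose $u\in\widetilde{T}\cap X$ satisfies $x^+(u)=x_3$; then $y^+(u)=y^-(x_3)$, so Lemma~\ref{T+}(ii) forbids $y^+(u)$ from having any neighbor in $X^+(\widetilde{T})-\{x_3\}$. This contradicts the crossing edge $x_2y^-(x_3)\in E(G)$ with $x_2\in X^+(\widetilde{T})-\{x_3\}$, forcing $u\in Y$. For part~(ii), I would take
\[
C':=x_1\,y^+(x_3)\,C[y^+(x_3),u_2]\,u_2\,P_D[u_2,u_1]\,u_1\,C^-[u_1,x_2]\,x_2\,y^-(x_3)\,C^-[y^-(x_3),x_1]\,x_1.
\]
The vertices of $V(C)$ dropped by $C'$ are $x_3$ together with $y^+(u_i)$ for each $i\in\{1,2\}$ with $u_i\in X$; the new vertices are the internal vertices of $P_D[u_1,u_2]$. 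A short case check on the parities of $u_1,u_2$, using bipartite alternation along $P_D$ and the hypothesis that $P_D$ has an internal $X$-vertex, shows that the new interior contributes at least as many vertices as the omissions, so $|C'|\geq|C|$; all $X$-vertices of $C$ other than $x_3$ and all $X$-vertices of $P_D[u_1,u_2]$ survive by construction.

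For part~(iii), suppose $x_3$ and some $v\in\{x\}\cup X^+(T)$ share a CON $y^*\notin V(C)$. If $v=x$, then $y^*$ is adjacent to $x\in D$, so $y^*\in D$, giving $x_3$ a neighbor in $D$ and contradicting part~(i). Otherwise $v=x^+(u)$ for some $u\in T$. When $y^-(x_3)\in\widetilde{T}$, i.e.\ $x_3\in X^+(\widetilde{T})$ by part~(i), Lemma~\ref{cross0}(i) applied to $x_3,v\in X^+(\widetilde{T})$ already rules out such a CON. When $x_3\notin X^+(\widetilde{T})$, the idea is to insert the length-$2$ path $v$--$y^*$--$x_3$ into $C'$: for $v\in\{x_1,x_2\}$, replace the crossing chord $x_1y^+(x_3)$ (resp.\ $x_2y^-(x_3)$) by $x_1\,y^*\,x_3\,y^+(x_3)$ (resp.\ $x_2\,y^*\,x_3\,y^-(x_3)$), producing a cycle of length $|C'|+2\geq|C|+2$ and contradicting the optimality of $C$. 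For $v\notin\{x_1,x_2\}$, one similarly splices the CON path $v$--$y^*$--$x_3$ with a $C$-edge at $x_3$, a crossing chord, and appropriate arcs of $C$, with a further case analysis on the cyclic position of $v$ relative to $x_3$, $u_1$, and $u_2$.

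For part~(iv), I would first establish that $x_3y^+(u_i)\notin E(G)$ whenever $u_i\in X$: if $u_1\in X$ and $x_3y^+(u_1)\in E(G)$, replace the chord $x_1y^+(x_3)$ in $C'$ with the path $x_1\,y^+(u_1)\,x_3\,y^+(x_3)$ (using the $C$-edge $x_1y^+(u_1)$ and the hypothetical edge $x_3y^+(u_1)$) to obtain a cycle of length $|C'|+2\geq|C|+2$, contradicting optimality; the case $u_2\in X$ is symmetric via the other chord. Consequently $N(x_3)\cap V(C)=N(x_3)\cap V(C')$, and the trivial $x_3,V(C')$-fan consisting of single edges gives $t(x_3,C')\geq|N(x_3)\cap V(C')|$. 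Since $|C'|\geq|C|$, the optimality of $(C,x,F)$ via Definition~\ref{better}(a) forces $|C'|=|C|$, and then (b) forces $t(x_3,C')\leq t$, yielding $|N(x_3)\cap V(C)|\leq t$. The hardest step will be part~(iii) in the subcase $v\notin\{x_1,x_2\}$ with $x_3\notin X^+(\widetilde{T})$, where care is needed to choose which arcs of $C$ and which portion of the fan to splice with the CON pair so that the resulting walk is a simple cycle of length exceeding $|C|$.
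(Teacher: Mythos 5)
Your parts (i), (ii), and (iv) are essentially correct and follow the paper's approach. For (i) you invoke Lemmas~\ref{neighborF1} and~\ref{T+}(ii) exactly as the paper does, and for (ii) your cycle $C'$ is the paper's $C_1$; the parity bookkeeping you sketch is the right thing to check. For (iv), your method (excluding the edges $x_3 y^+(u_i)$ by splicing a length-$3$ path into a chord of $C_1$, then invoking Definition~\ref{better}(a),(b) against the best triple) is a slight variant of the paper's, and is clean: because $|C_1|\ge|C|$ and the splice adds exactly two new vertices, the resulting cycle is always strictly longer, so you never need a degenerate-case fallback.

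The gap is in part (iii), precisely where you flag ``the hardest step.'' Your splice of the CON path $v\,y^*\,x_3$ into $C_1$ works only when $v\in\{x_1,x_2\}$, because $x_1$ and $x_2$ are the endpoints of the two crossing chords of $C_1$; there, replacing a chord by a length-$3$ path never removes any vertex of $C_1$, so the length jumps by $2$. When $v=x^+(u_j)$ for $u_j\in T\setminus\{u_1,u_2\}$, the vertex $v$ lies in the \emph{interior} of one of the arcs of $C_1$, and since the only links from $x_3$ back to $V(C_1)\cup\{y^*\}$ are $y^+(x_3)$ and $y^-(x_3)$, any attempt to route through $v\,y^*\,x_3$ must delete an arc of $C_1$ whose length is not under control. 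The paper instead builds a \emph{different} cycle from scratch, e.g.\ $x_1C[x_1,x_3]x_3yx_jC[x_j,u_1]u_1P_D[u_1,u_j]u_jC^-[u_j,y^+(x_3)]y^+(x_3)x_1$, which reuses only the detour $P_D[u_1,u_j]$ (or $P_D[u_j,u_2]$), not $P_D[u_1,u_2]$. Crucially, that construction can produce a cycle of exactly length $|C|$ (when $P_D[u_1,u_j]=u_1y'u_j$), and then the proof has to fall back on the better-triple comparison from Definition~\ref{better} just as in Lemmas~\ref{T+} and~\ref{neighborF0}. Your proposal does not construct the alternative cycle, and more importantly does not anticipate this degenerate case, so even a fully spelled-out version of your ``splice with appropriate arcs'' plan would need the additional tie-breaking argument that your write-up lacks.
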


\begin{proof}
Part (i) follows from Lemmas~\ref{T+} and~\ref{neighborF1}. The cycle 
\[
	C_1 := x_1y^+(x_3)C[y^+(x_3),u_2]u_2P_D[u_2,u_1]u_1C^-[u_1,x_2]x_2y^-(x_3)C^-[y^-(x_3),x_1]x_1
\]
proves (ii).

To prove (iii), assume that $y$ is a CON of $x_3$ with a vertex in $\{x\}\cup X^+(T)$, and
consider all cases. First note that by Lemma~\ref{neighborF1},  $y\notin D$; in particular,  $x_3$ has no CON with $x$. If $u_j\in \widetilde{T}$, $x_j=x^+(u_j)$, $yx_j\in E(G)$, and   $x_j\in C[y^+(x_3),u_1]$, then the cycle 
\[
	C' := x_1C[x_1,x_3]x_3yx_jC[x_j,u_1]u_1P_D[u_1,u_j]u_jC^-[u_j,y^+(x_3)]y^+(x_3)x_1
\]
is longer than $C$, unless
 $P_D[u_1,u_j]=u_1y'u_j$ for some $y'\in D$. If $P_D[u_1,u_j]=u_1y'u_j$,  then $|C'|=|C|$ and the only vertices in $V(C)-V(C')$ are $y^+(u_1)$ and $y^+(u_j)$
which by Lemma~\ref{neighborF0}(i) do not have neighbors in $D$. Define an $x,C'$-fan $F'$ as at the end of the proof of
Lemma~\ref{neighborF0}, and see that $(C',x,F')$ is better than $(C,x,F)$ exactly as there. Similarly, if $x_j\in C[u_1,y^-(x_3)]$, then the cycle
\[
	C' := x_2C[x_2,u_j]u_jP_D[u_j,u_2]u_2C^-[u_2,x_3]x_3yx_jC[x_j,y^-(x_3)]y^-(x_3)x_2
\]
is longer than $C$,  unless $P_D[u_j,u_2]=u_jy'u_2$ for some $y'\in D$. Again, defining $F'$ as above, we get a triple $(C',x,F')$ better than $(C,x,F)$,
 a contradiction. This proves (iii).

By the choice of $(C,x,F)$ and (ii), $x_3$ has at most $t$ neighbors on $C_1$. The only vertices in $Y\cap V(C)-V(C_1)$ are $y^-(x_1)$ and $y^-(x_2)$. If $x_3y^-(x_1)\in E(G)$, then the cycle 
\[
	y^-(x_1)C[y^-(x_1),y^-(x_3)]y^-(x_3)x_2C[x_2,u_1]u_1P_D[u_1,u_2]u_2C^-[u_2,x_3]x_3y^-(x_1)
\]
is longer than $C$. If $x_3y^-(x_2)\in E(G)$, then the cycle 
\[
	x_1C[x_1,x_3]x_3y^-(x_2)C[y^-(x_2),u_1]u_1P_D[u_1,u_2]u_2C^-[u_2,y^+(x_3)]y^+(x_3)x_1
\]
is longer than $C$. 
This proves (iv).
\end{proof}

\begin{lem}\label{cros11} Suppose $u_1,u_2\in \widetilde{T}$ are such that the path $P_D[u_1,u_2]$ contains an internal vertex in~$X$,
 $x_1=x^+(u_1)$, and $x_2=x^+(u_2)$. Then  at most one vertex in $C$ is crossed by $x_1$ and $x_2$.
\end{lem}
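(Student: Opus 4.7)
Assume for contradiction that $x_1$ and $x_2$ cross at two distinct vertices $x_3, x_3'\in X\cap V(C)$. After possibly reversing the orientation of $C$ (which swaps the two types of crossing) and swapping $x_3\leftrightarrow x_3'$, we may assume the clockwise cyclic order on $C$ is $u_1, x_1, x_3, x_3', x_2, u_2$ and both crossings are of the first type, so the four chord edges $x_1y^+(x_3),\, x_1y^+(x_3'),\, x_2y^-(x_3),\, x_2y^-(x_3')$ all belong to $E(G)$.

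First I apply Lemma~\ref{cross1}(ii) at the crossing $x_3$: this yields a cycle $C_1$ with $|C_1|\geq|C|$ and $V(C_1)\supseteq (X\cap V(C)\setminus\{x_3\})\cup (X\cap P_D[u_1,u_2])$. If $|C_1|>|C|$, then equipping $C_1$ with a fan and suitable sub-fan witnesses a triple beating $(C,x,F)$ by Definition~\ref{better}(a), a contradiction. Hence $|C_1|=|C|$, which forces $P_D[u_1,u_2]$ to have exactly one interior vertex, and bipartite alternation forces this vertex to lie in $X$---call it $v$---and $u_1,u_2\in Y$.

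Next I combine both crossings with the now-identified $D$-path $u_1vu_2$ to build the explicit cycle
\[
C^*:=x_1\,y^+(x_3')\,x_3'\,y^-(x_3')\,C^-[y^-(x_3'),y^+(x_3)]\,y^+(x_3)\,x_3\,y^-(x_3)\,x_2\,u_2\,v\,u_1\,x_1,
\]
which uses the two outer chord edges $x_1y^+(x_3')$ and $x_2y^-(x_3)$, the short $C$-arcs around $x_3$ and $x_3'$, the middle $C$-arc from $y^-(x_3')$ counter-clockwise to $y^+(x_3)$, the $C$-edges $u_1x_1$ and $u_2x_2$, and the $D$-path $u_1vu_2$. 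Let $P$, $Q$, $O$ denote the three arcs of $C$ omitted by $C^*$: strictly between $x_1$ and $y^-(x_3)$, between $y^+(x_3')$ and $u_2$, and between $x_2$ and $u_1$ respectively. A direct count gives $|C^*|=|C|+1-|P|-|Q|-|O|$.

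The main obstacle is length accounting: a bipartite parity check (using $u_1,u_2\in Y$ and $x_1,x_2\in X$) shows $|P|,|O|$ are even and $|Q|$ is odd, so $|P|+|Q|+|O|$ is odd and at least~$1$, whence $|C^*|\leq|C|$. To finish I split into two subcases. In the tight subcase $|P|=|O|=0$ and $|Q|=1$, we have $|C^*|=|C|$; here I argue that $(C^*,x,F^*)$, for a fan $F^*$ obtained from $F$ by truncating at $v$ the at most one leg of $F$ passing through $v$ (the legs being internally vertex-disjoint in $D$), is strictly better than $(C,x,F)$ via Definition~\ref{better}(e): the quantities $t$, $|V(F)\cap V(C)\cap Y|$ and $|V(F)|$ are preserved while $D(C^*,x)\subseteq D\setminus\{v\}$ is strictly smaller than $D(C,x)$. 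In the remaining subcase $|P|+|Q|+|O|\geq 3$, the cycle $C^*$ is strictly shorter than $C$, and I instead apply a second Lemma~\ref{cross1}(ii)-style argument inside $C_1$ at the still-present crossing of $x_1,x_2$ at $x_3'$, taking as the new ``$u_1,u_2$'' the $C_1$-predecessors of $x_1$ and $x_2$; the extra $X$-vertices furnished by $|P|+|O|\geq 2$ serve as internal $X$-vertices of the new $P_{D_1}$-path, yielding a cycle $C_2$ with $|C_2|>|C_1|=|C|$, again a contradiction. The hardest step is verifying this final application, particularly that the required $C_1$-predecessors of $x_1,x_2$ lie in $\widetilde{T}(C_1,x)$ and that an $X$-interior path through $D_1=D\setminus\{v\}$ exists---arguments that use the chord edges and the 3-connectivity of $G$ to route through the non-empty gap arcs.
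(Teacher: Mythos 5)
Your approach is genuinely different from the paper's, but it has gaps that I don't believe can be closed without essentially abandoning the plan.

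The paper's proof of Lemma~\ref{cros11} first shows that if $x_3, x_4$ are both crossed by $x_1, x_2$, then $x_3$ and $x_4$ cannot have a CON --- this is done via two explicit longer cycles through the hypothetical CON $y$, one for each possible arrangement of $x_3, x_4$ around $C$. It then observes that $x_3, x_4$ have at most $t$ neighbors on $C$ (Lemma~\ref{cross1}(iv)), and closes with a degree-sum estimate on $A = X^+(T)\cup\{x,x_3,x_4\}$ that yields $|Y|\geq 4\delta-9+t_X$, a contradiction. You take no counting step at all: you try to produce a longer cycle (or a better triple) directly from the two crossing chord-pairs, without any CON.

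Two concrete problems. First, your reduction ``after possibly reversing orientation and swapping $x_3\leftrightarrow x_3'$ we may assume both $x_3,x_3'$ lie between $x_1$ and $x_2$'' does not hold: reversing the orientation redefines $x^+(u_1)$ and $x^+(u_2)$ to entirely different vertices (namely $x^-(u_1)$ and $x^-(u_2)$ in the original orientation), so the lemma's $x_1,x_2$ are no longer the vertices under discussion. The paper's proof splits explicitly into the case where $x_3,x_4$ are on the same $x_1,x_2$-arc and the case where they are on opposite arcs, and constructs a different longer cycle for each; your argument silently discards the mixed case, which is a genuine omission. Second, the cycle $C^*$ you build, because it uses no CON, satisfies $|C^*|\leq|C|$ by your own parity computation, and the tight case $|C^*|=|C|$ does not cleanly give a better triple via Definition~\ref{better}(e): if the interior vertex $v$ of $P_D[u_1,u_2]$ lies on the leg of $F$ ending at a $Y$-vertex of $T$, the truncation replaces that $Y$-endpoint by $v\in X$, which \emph{decreases} $|V(F^*)\cap V(C^*)\cap Y|$ and hence makes the triple \emph{worse}, not better, by (c); and if $v=x$ the fan $F^*$ is not even defined. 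Also, at this point of the paper nothing bounds $t$ or $\widetilde t$, so you cannot assume $T=\{u_1,u_2,u_3\}$ or that $D(C^*,x)$ shrinks rather than absorbing vertices from the omitted arcs $P,Q,O$. Finally, your ``remaining subcase'' paragraph is explicitly deferred and would require showing that predecessors of $x_1,x_2$ on $C_1$ lie in $\widetilde T(C_1,x)$ and that a suitable $X$-interior path exists; as written this is a sketch, not a proof. In short, the missing ingredients are precisely the CON-elimination step and the global degree-sum estimate on $A$, which are the heart of the paper's argument.
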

\begin{proof}

Suppose vertices $x_3, x_4 \in V(C) \cap X$ are crossed by $x_1$ and $x_2$. We will show first that $x_3$ and $x_4$ have no CON. Suppose there is some $y \in (N(x_3) \cap N(x_4)) - V(C)$. By Lemma~\ref{cross1}, $y \notin V(D)$.

We consider two cases. If $x_3$ and $x_4$  both are on $C[x_1,x_2]$ or both are on $C[x_2, x_1]$, then we may assume that their  cyclic order is $x_1, x_3, x_4, x_2$. In this case, the cycle 
\[
	x_1 C[x_1, x_3] x_3 y x_4 C[x_4, u_2] u_2 P_D[u_2, u_1] u_1C^-[u_1, x_2] x_2y^-(x_4) C^-[y^-(x_4), y^+(x_3)] y^+(x_3)x_1
\]
(see Figure~\ref{fig:crossCON}, left) is longer than $C$.

If one of $x_3$ and $x_4$ is  on $C[x_1, x_2]$ and the other is on $C[x_2, x_1]$, then we may assume that their cyclic order is $x_1, x_3, x_2, x_4$. 
In this case, the cycle
\[
	x_1 C[x_1, x_3] x_3 y x_4 C^-[x_4, x_2] x_2 y^+(x_4) C[y^+(x_4), u_1] u_1 P_D[u_1, u_2] u_2C^-[u_2, y^+(x_3)] y^+(x_3) x_1
\]
(see Figure~\ref{fig:crossCON}, right) is longer than $C$.
This proves that $x_3$ and $x_4$ have no CON. 

\begin{figure}
\centering
\includegraphics[scale=.5]{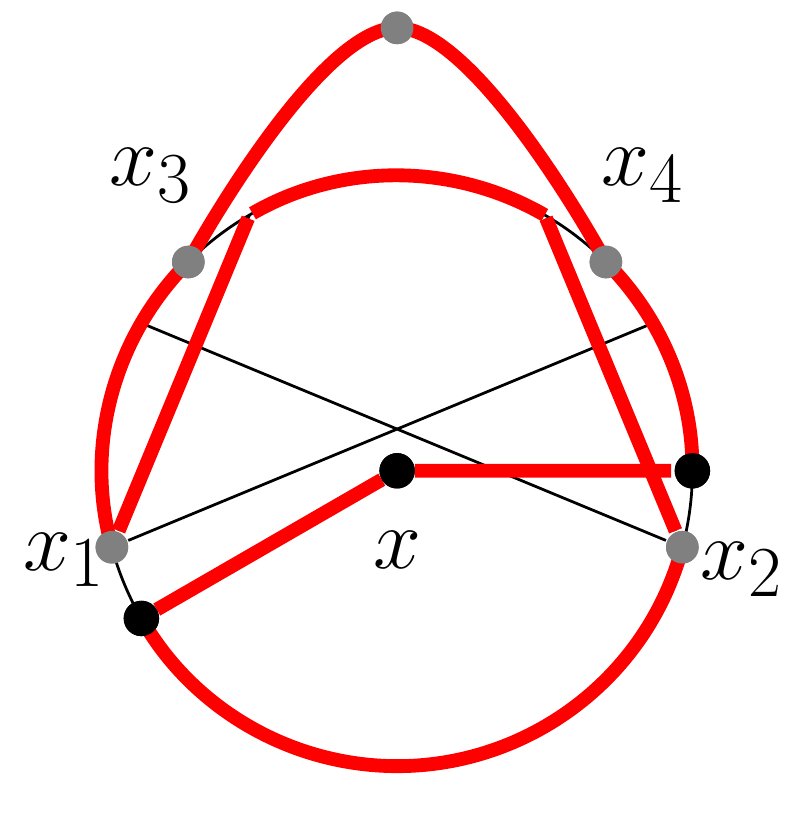}
\includegraphics[scale=.5]{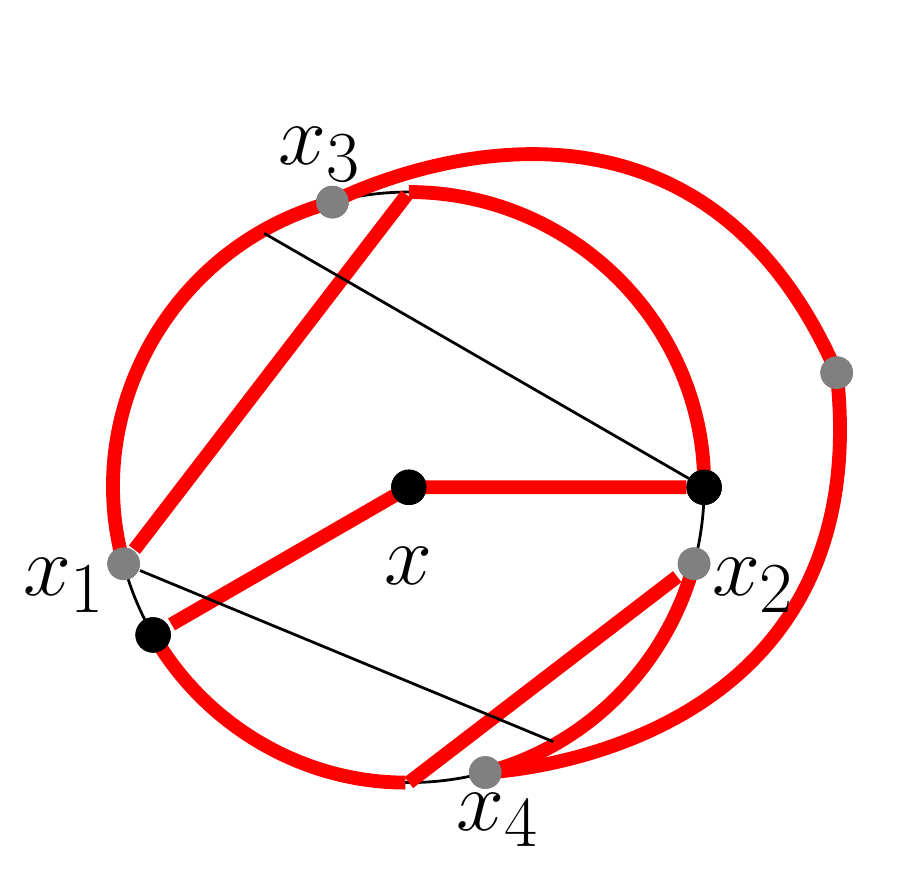}
\caption{Longer cycles when $x_1$ and $x_2$ in Lemma~\ref{cros11} have multiple crossings.}
\label{fig:crossCON}
\end{figure}

 Let $A=X^+(T)\cup\{x,x_3,x_4\}$
(possibly, $X^+(T)\cap\{x_3,x_4\}\neq \emptyset$), and $A'=A-\{x,x_3,x_4\}$. Note that $|A'|\geq t-2$. 

By definition, $|N(x)-C|\geq \delta-t_Y$. By Lemma~\ref{cross1}(iv),  $|N(x_3)-C|\geq \delta-t$ and $|N(x_4)-C|\geq \delta-t$. By Lemma~\ref{T+},
\begin{equation}\label{sum}
\sum_{u\in A'} |N(u)\cap V(C)|\leq \ell|A'|-t_X|A'|+\min\{t_X,|A'|\}.
\end{equation}
 By Lemmas~\ref{cross0} and~\ref{cross1}(iii), no two distinct vertices in $A$ have a CON.
Thus, using~\eqref{sum} and remembering about the $\ell$ vertices in $Y\cap V(C)$, we get \allowdisplaybreaks
\begin{align*}
|Y|	&\geq \ell+\sum_{u\in A}|N(u)-V(C)|\\
	&= \ell+|N(x)-V(C)|+|N(x_3)-V(C)|+|N(x_4)-V(C)|+\sum_{u\in A'}|N(u)-V(C)|\\
	&\geq \ell+(\delta-t_Y)+(\delta-t)+(\delta-t)+(\delta|A'|-\sum_{u\in A'} |N(u)\cap V(C)|)\\
	&\geq \ell+(|A'|+3)\delta-2t-t_Y-(\ell-t_X)|A'|-\min\{t_X,|A'|\}\\
	&\geq \ell+(t-2+3)\delta-2t-(t-t_X)-(\ell-t_X)(t-2)-\min\{t_X,t-2\}\\
	&= \ell+(t+1)\delta-(3t-t_X)-(\ell-t_X)(t-2)-\min\{t_X,t-2\} \\
	&\geq \ell+(t+1)\delta-3t-(\ell-t_X)(t-3+1) \\
	&=(t+1)\delta-3t-(\ell-t_X)(t-3)+t_X.
\end{align*}
Since by Lemma~\ref{tX}, $\delta\geq \ell-t_X+3$, this yields
\begin{align*}
|Y|	&\geq (t+1)\delta-3t-\delta(t-3)+3(t-3)+t_X =4\delta-9+t_X.
\end{align*}
This contradiction proves the lemma.
\end{proof}

The following lemma holds for any bipartite graph $G$ (no restrictions on minimum degree or connectivity).

\begin{lem}\label{cros12}
Let $C$ be a cycle of $G$, and let $u, v\in V(C) \cap X$. If $u$ and $v$ have at most $a$ crossings, then $d_C(u) + d_C(v) \leq |V(C)|/2 + 2 + a$.\end{lem}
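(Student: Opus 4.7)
The plan is to reduce the inequality to a one-dimensional combinatorial lemma, applied separately to each of the two arcs of $C$ from $u$ to $v$. Orient $C$ clockwise and list its $Y$-vertices in cyclic order. For each $Y$-vertex $y$ of $C$, assign it an $A$-mark if $uy \in E(G)$ and a $B$-mark if $vy \in E(G)$; then $d_C(u) + d_C(v)$ equals the total number of $A$- and $B$-marks across the $\ell := |V(C)|/2$ $Y$-vertices of $C$. The vertices $u$ and $v$ split these $Y$-vertices into two arcs, with $n_1$ and $n_2$ vertices respectively, where $n_1 + n_2 = \ell$. Next unwind the crossing definition: a crossing at $x_3$ with cyclic order $u, x_3, v$ places $x_3$ on one arc and requires $uy^+(x_3), vy^-(x_3) \in E(G)$, while the other cyclic order places $x_3$ on the other arc and requires $uy^-(x_3), vy^+(x_3) \in E(G)$. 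In each case, orienting the corresponding arc from $u$ toward $v$, the crossing is precisely a pair of consecutive $Y$-vertices on that arc (with $x_3$ as the $X$-vertex between them) whose first is $B$-marked and whose second is $A$-marked --- call this a $BA$-transition. Thus if $s_1, s_2$ denote the numbers of $BA$-transitions on the two arcs (each oriented from $u$ to $v$), then $s_1 + s_2$ equals the total number of crossings of $u,v$, which is at most $a$.

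The core is a sequence lemma: for any sequence of $n$ positions, each labeled with a subset of $\{A,B\}$, the total number of marks is at most $n + 1 + s$, where $s$ is the number of $BA$-transitions (indices $i$ with position $i$ bearing a $B$-mark and position $i+1$ bearing an $A$-mark). To prove it, let $k$ be the number of positions labeled with both $A$ and $B$ and $z$ the number of positions with no mark; the total mark count equals $n + k - z$, and it suffices to show $k \leq z + s + 1$. Enumerate the both-labeled positions as $j_1 < j_2 < \cdots < j_k$. For each consecutive pair $(j_\alpha, j_{\alpha+1})$, either the open interval $(j_\alpha, j_{\alpha+1})$ contains an unlabeled position (and we charge the pair to it), or every position in this interval carries at least one label; in the latter case, starting at $j_\alpha$ (which is $B$-marked) and walking forward, the first $A$-labeled successor exists no later than $j_{\alpha+1}$ and yields a $BA$-transition at some index in $[j_\alpha, j_{\alpha+1}-1]$. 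For different $\alpha$, the intervals $(j_\alpha, j_{\alpha+1})$ are pairwise disjoint and the intervals $[j_\alpha, j_{\alpha+1}-1]$ are pairwise disjoint, so each unlabeled position and each $BA$-transition is charged at most once, giving $k - 1 \leq z + s$.

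Applying the lemma to each arc and summing,
\[
d_C(u) + d_C(v) \leq (n_1 + 1 + s_1) + (n_2 + 1 + s_2) = \ell + 2 + (s_1 + s_2) \leq \tfrac{|V(C)|}{2} + 2 + a,
\]
which is the desired inequality. The main technical obstacle is the first step: the two cases of the crossing definition are phrased asymmetrically, but once each arc is oriented from $u$ toward $v$ they collapse into the same $BA$-transition pattern on both sides. After that identification the sequence lemma is a tight but elementary charging argument, and adding the two per-arc bounds yields the conclusion.
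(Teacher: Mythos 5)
Your proof is correct. Both you and the paper begin with the same decomposition of $C$ into two $u,v$-arcs and count contributions to $d_C(u)+d_C(v)$ arc by arc, but the way the crossing count $a$ is absorbed differs. The paper proves the $a=0$ case directly (for each interior $X$-vertex $w$ of an arc, not both $uy^+(w)$ and $vy^-(w)$ can be edges, since that would be a crossing; this gives $d_{P_i}(u)+d_{P_i}(v)\leq |V(P_i)\cap X|$ per arc) and then handles $a\geq 1$ by a one-line induction: delete an edge of $u$ participating in a crossing, which lowers both the degree sum and the crossing count by at least one. Your argument dispenses with the induction and instead proves a self-contained ``sequence lemma'': on a linear sequence of $n$ $Y$-positions marked by subsets of $\{A,B\}$, the total mark count is at most $n+1+s$, where $s$ is the number of $BA$-transitions, established by a charging argument matching consecutive doubly-marked positions either to an unmarked position between them or to a forced $BA$-transition. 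Your translation of the crossing definition into $BA$-transitions (after orienting each arc from $u$ toward $v$) is carried out correctly in both cyclic orderings, and the resulting injection from transitions to crossings gives $s_1+s_2\leq a$. The tradeoff: the paper's induction is shorter and leans on the already-proved $a=0$ bound, while your charging argument is longer to set up but gives a single uniform argument that makes explicit how each crossing buys exactly one extra unit of degree, with no need to argue that deleting an edge strictly decreases the crossing count. Both proofs are valid; at the $s=0$ specialization your sequence lemma reproduces exactly the paper's base-case bound $d_{P_i}(u)+d_{P_i}(v)\leq |V(P_i)\cap X|$.
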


\begin{proof}
We induct on $a$. Suppose $a = 0$. Consider the two paths $P_1=C[u,v]$ and $P_2=C^-[u,v]$. In $P_1=v_1\ldots v_k$ ($v_1 = u, v_k = v)$, each $v_i \in X$ satisfies at most one of the following: $v_{i+1}u \in E(G)$ or $v_{i-1}v \in E(G)$. So $d_{P_1}(u) + d_{P_1}(v) \leq |V(P_1) \cap X|$. Similarly, $d_{P_2}(u) + d_{P_2}(v) \leq |V(P_2) \cap X|$. Since $(X \cap V(P_1)) \cap (X \cap V(P_2)) = \{u,v\}$ and $V(P_1) \cup V(P_2) = V(C)$, we get $d_C(u) + d_C(v) \leq |V(C)|/2 + 2$. 

For $a \geq 1$, delete an edge incident to $u$ that is used in a crossing, and apply induction.
\end{proof}

%
%

\section{Bounds on $t$ and $\widetilde{t}$ in best triples}\label{st3}

Recall that  $(C,x,F)$ is a best triple, $D=D(C,x)$ is the component of $G-V(C)$ containing $x$,  $T = V(F) \cap V(C)$, and $\widetilde{T}= N_C(D)$.

A set of vertices $W=\{x_1, \ldots, x_k\} \subseteq X \cap V(C)$ is {\bf good} if 
\begin{enumerate}
\item[(i)] $d_C(x) \leq k$,
\item[(ii)] the vertices of $\{x\} \cup W$ pairwise have no CON, and
\item[(iii)] we can partition $W$ into sets $W_1,\ldots,W_s$ such that for each $j\in [s]$, $|W_j|\geq 2$ and any two distinct
vertices in $W_j$ cross at no more than one vertex in $C$. 
\end{enumerate}

\begin{lem}\label{good} If $W$ is a good set, then $|W| < \max\{4,t\}$. \end{lem}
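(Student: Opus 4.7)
The plan is to assume for contradiction that $|W|=k\ge\max\{4,t\}$ and derive $|Y|\ge 4\delta-9$, which contradicts the standing bound $m=|Y|\le 4\delta-10$ (equivalent to the hypothesis $\delta\ge(m+10)/4$).

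By condition (ii), the sets $N(v)-V(C)$ for $v\in\{x\}\cup W$ are pairwise disjoint; since $x$ and every $w\in W$ lie in $X$, they are all subsets of $Y-V(C)$. Together with $|Y\cap V(C)|=\ell$ this gives
\[
m\ \ge\ \ell+|N(x)-V(C)|+\sum_{w\in W}|N(w)-V(C)|\ \ge\ \ell+(\delta-k)+k\delta-\sum_{w\in W}d_C(w),
\]
where condition (i) supplies $|N(x)-V(C)|\ge\delta-d_C(x)\ge\delta-k$ and each term $|N(w)-V(C)|\ge\delta-d_C(w)$.

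The key step is to upper-bound $\sum_{w\in W}d_C(w)$ using the partition in (iii). For each part $W_j$ with $p_j=|W_j|\ge 2$, any two vertices of $W_j$ cross at most once, so Lemma~\ref{cros12} gives $d_C(u)+d_C(v)\le\ell+3$ for every pair in $W_j$. Summing the $\binom{p_j}{2}$ pair-inequalities (each $d_C(w)$ appears $p_j-1$ times) yields $\sum_{w\in W_j}d_C(w)\le p_j(\ell+3)/2$, and summing over $j$ gives $\sum_{w\in W}d_C(w)\le k(\ell+3)/2$. Substituting back and rearranging,
\[
m\ \ge\ \Big(1-\tfrac{k}{2}\Big)\ell+(k+1)\delta-\tfrac{5k}{2}.
\]

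Finally, $x\in X-V(C)$ forces $|X|\ge\ell+1$, so the hypothesis $\delta\ge n=|X|$ yields $\ell\le\delta-1$. For $k\ge 4$ the coefficient $1-k/2$ is nonpositive, so applying $\ell\le\delta-1$ gives $m\ge(k/2+2)\delta-2k-1$. This equals $4\delta-9$ exactly when $k=4$; for $k\ge 5$ the excess over $4\delta-9$ equals $(k-4)(\delta-4)/2$, which is nonnegative since $\delta\ge|X|\ge t+3\ge 6$ by Lemma~\ref{tX}. Either way $m\ge 4\delta-9$, contradicting $m\le 4\delta-10$. The only non-mechanical step is the LP-style aggregation of the pairwise crossing bounds into $\sum d_C(w)\le k(\ell+3)/2$; the rest is arithmetic, and the threshold $k\ge 4$ is chosen precisely so that the coefficient of $\ell$ becomes nonpositive and the single bound $\ell\le\delta-1$ suffices.
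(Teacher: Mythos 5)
Your proof is correct and follows essentially the same approach as the paper: disjointness of out-of-$C$ neighborhoods from the no-CON condition, the aggregated pair bound $\sum_{w\in W}d_C(w)\le k(\ell+3)/2$ from Lemma~\ref{cros12}, the substitution $\ell\le\delta-1$, and minimization over $k\ge 4$. Your explicit LP-style justification of the aggregation and your handling of $k\ge 5$ are slightly more detailed than the paper's, but the argument is the same.
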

\begin{proof} Suppose $k\geq \max\{4,t\}$ and $W=\{x_1, \ldots, x_k\}$
 is a good set. Note that $\delta \geq |X| \geq |W| \geq 4$. Let $(W_1,\ldots,W_s)$ be a partition of $W$ satisfying (iii)
 in the definition of a good set.
 By Lemma~\ref{cros12}, if $x_i$ and $x_j$ have at most one crossing, then $d_C(x_i) + d_C(x_j) \leq \ell + 3$. Hence 
\[
	\sum_{i=1}^k d_C(x_i) =\sum_{j=1}^s\sum_{w\in W_j}d_C(w)\leq k(\ell + 3)/2.
\]
Since $|Y\cap V(C)|=\ell$, $\delta(G) \geq \delta$ and $k\geq t$, we get
\[
	|Y|\geq \ell+(k+1)\delta-t-k\frac{\ell+3}{2}\geq \ell\left(1-\frac{k}{2}\right)+k\left(\delta-\frac{5}{2}\right)+\delta.
\]
Since the net coefficient at $\ell$ is negative and $\ell\leq |X|-1\leq \delta-1$, this is at least
$k\left(\frac{\delta}{2}-2\right)+2\delta-1$. Now the net coefficient at $k$ is nonnegative, so the minimum is attained at
$k=4$. Hence $|Y|\geq 
4\delta-9$, a contradiction.
\end{proof}

Next, we show that both $t$ and $\widetilde{t}$ are small.

\begin{lem}\label{t3}  $t = 3$.
\end{lem}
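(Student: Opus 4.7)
The plan is to argue by contradiction: suppose $t \geq 4$, and show that $W := X^+(T)$ is a good subset of $X \cap V(C)$ with $|W| = t$, contradicting Lemma~\ref{good}.

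First I would verify $|W| = t$, i.e.\ that the map $u \mapsto x^+(u)$ is injective on $T$. The only distinct pairs of vertices of $C$ sharing the same $x^+$-image are consecutive pairs $\{X_i, Y_i\}$ with $Y_i = y^+(X_i)$, and the proof of Lemma~\ref{tX} explicitly forbids $T$ from containing any such pair.

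Next I would check the three defining conditions of a good set. Condition (i), $d_C(x) \leq t = |W|$, is immediate, since the length-$1$ paths from $x$ to its neighbors on $C$ form an $x,C$-fan, so $t \geq d_C(x)$. Condition (ii), that $\{x\} \cup W$ is pairwise CON-free, is precisely Lemma~\ref{cross0} applied to $X^+(T) \subseteq X^+(\widetilde{T})$. For condition (iii) I would use the trivial one-block partition $W_1 := W$ and invoke Lemma~\ref{cros11}, whose hypothesis requires that for every pair $u_i, u_j \in T$ the path $P_D[u_i,u_j]$ contain an internal vertex in $X$. The fan path $F[u_i,u_j]$ is a legitimate candidate for $P_D[u_i,u_j]$ (its interior lies in $D$ since $F$ meets $C$ only at its leaves) and passes through $x \in X$, so $|P_D[u_i,u_j]| \geq |F[u_i,u_j]|$; this forces $|P_D[u_i,u_j]| \geq 2$, $\geq 3$, or $\geq 4$ according as $u_i, u_j$ both lie in $Y$, lie in different parts, or both lie in $X$. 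In each case a brief parity/bipartiteness check shows that any $u_i, u_j$-path of at least that length must have an internal vertex in $X$.

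The main obstacle is verifying condition (iii): although each individual subcase is short, one must carefully combine the lower bound on $|P_D[u_i,u_j]|$ inherited from the fan path with the bipartite alternation to rule out the only obstruction, namely a length-$2$ path between two $X$-vertices. Once condition (iii) is established, $W$ is a good set of size $t \geq 4$, and Lemma~\ref{good} gives $t = |W| < \max\{4,t\} = t$, a contradiction. Hence $t \leq 3$, and since $t \geq 3$ by $3$-connectivity of $G$, we conclude $t = 3$.
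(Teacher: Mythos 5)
Your proposal follows the paper's proof exactly: take $W = X^+(T)$ with $|W| = t \geq 4$, use Lemma~\ref{cross0} for the CON condition, Lemma~\ref{cros11} for the crossings condition (with the trivial one-block partition), and contradict Lemma~\ref{good}. The only difference is one of detail: the paper cites Lemma~\ref{cros11} without spelling out why its hypothesis holds and does not explicitly note the injectivity of $u \mapsto x^+(u)$ on $T$, whereas you verify both. Your verifications are correct --- injectivity follows from the forbidden pair $\{w, y^+(w)\}$ established in the proof of Lemma~\ref{tX}, and the parity argument is sound: the fan path $F[u_i,u_j]$ through $x$ lower-bounds $|P_D[u_i,u_j]|$ so that bipartite alternation forces an internal $X$-vertex in every case, including the potentially problematic $X,X$ case where the fan path already has length at least $4$. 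This is a valid proof of the statement by the same method as the paper.
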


\begin{proof}Since $G$ is 3-connected, $t= |T| \geq 3$. Suppose $t\geq 4$. We claim that  $X^+(T)$ is a good set. 

Since 
$F$ is a largest $x, C$-fan,
 $x$ has at most $t$ neighbors in $C$. By Lemma~\ref{cros11}, for any $x_i, x_j \in X^+(T)$, $x_i$ and $x_j$ have at most one crossing in $C$.  By  Lemma~\ref{cross0},  no two distinct vertices in $X^+(T) \cup \{x\}$ have a CON.  This certifies that  $X^+(T)$  is good, a contradiction to Lemma~\ref{good}.
\end{proof}

\begin{lem}\label{td3} $| \widetilde{T}|=3$.
\end{lem}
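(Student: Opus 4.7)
My plan is to mirror the proof of Lemma~\ref{t3}, working with the (a priori larger) set $\widetilde{T}$ in place of $T$. Assume for contradiction that $\widetilde{t}\ge 4$, pick any four distinct vertices $u_1,u_2,u_3,u_4\in\widetilde{T}$, and let $W=\{x^+(u_1),x^+(u_2),x^+(u_3),x^+(u_4)\}$. The goal is to show that $W$ is a good set of size~$4$, which will contradict Lemma~\ref{good} since $\max\{4,t\}=4$.

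First I would verify $|W|=4$: if $x^+(u)=x^+(u')$ for distinct $u,u'\in V(C)$, then up to swapping $u\in X$ and $u'=y^+(u)\in Y$, which by Lemma~\ref{T+}(i) forces $u'\notin\widetilde{T}$, a contradiction. Then I would check the three requirements of a good set. Condition~(i), $d_C(x)\le 4$, holds because the maximality of $F$ makes every $V(C)$-neighbor of $x$ an endpoint of some $F$-path (else the length-$1$ path $xy$ extends $F$), so $d_C(x)\le t=3$. Condition~(ii), that no two vertices of $\{x\}\cup W$ share a CON, is immediate from Lemma~\ref{cross0}. For condition~(iii) I would use the trivial partition $W_1=W$, which reduces to showing that every pair $x^+(u_i),x^+(u_j)$ cross at most once in $C$.

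The main obstacle is this last point, since Lemma~\ref{cros11} supplies the crossing bound only when $P_D[u_i,u_j]$ contains an internal $X$-vertex. In Lemma~\ref{t3} this was automatic because $u_i,u_j\in T$ met at $x\in X\cap D$ through $F$; here I must re-establish it for arbitrary $u_i,u_j\in\widetilde{T}$. Each $u_k\in\widetilde{T}$ has a neighbor in the component $D$ of $G-V(C)$ containing $x$, so concatenating a $u_i,x$-path and an $x,u_j$-path through $D$ produces a $u_i,u_j$-path in which $x\in X$ is an internal vertex. Hence $P_D[u_i,u_j]$ has length at least~$2$, and a short case analysis on whether each of $u_i,u_j$ lies in $X$ or $Y$ (using bipartite parity) confirms that the longest such path must contain an internal $X$-vertex. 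With every pair in $W$ crossing at most once, $W$ is good and of size~$4$, contradicting Lemma~\ref{good}; combined with $\widetilde{t}\ge t=3$ this forces $\widetilde{t}=3$.
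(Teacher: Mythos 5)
The gap is in your claim that $P_D[u_i,u_j]$ has an internal $X$-vertex for \emph{every} pair $u_i,u_j\in\widetilde{T}$. The ``concatenation'' of a $u_i,x$-path and an $x,u_j$-path through $D$ need not be a simple path: the two legs can share internal vertices of $D$ other than $x$, giving only a walk. Concretely, if $D$ consists of $x$ together with a single $Y$-vertex $y$ adjacent to $x$, and $u_i,u_j\in X\cap\widetilde{T}$ both have $y$ as their only neighbor in $D$, then the unique $u_i,u_j$-path through $D$ is $u_iyu_j$, which has no internal $X$-vertex, and yet nothing in the setup rules this out. For two $X$-vertices of $\widetilde{T}$ whose only ``access'' to $D$ is a common $Y$-neighbor, the hypothesis of Lemma~\ref{cros11} simply fails, so your trivial partition $W_1=W$ cannot be certified good. (Note the bipartite-parity case analysis you invoke is exactly what exposes this: it is precisely the $u_i,u_j\in X$ case that can produce a length-$2$ path $u_iy'u_j$ with no internal $X$-vertex.)

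The paper's proof is designed around this obstruction. Instead of arbitrary $u_1,\dots,u_4$, it takes $U=T\cup\{u_4\}$ with $u_4\in\widetilde{T}-T$, and uses a shortest $u_4,(F-C)$-path $P$ landing on the $x,u_j$-leg of the spider $F$ to identify which pairings are safe: $\{u_j,u_{j''}\}$ is connected through $F$ via $x$, and $\{u_4,u_{j'}\}$ is connected via $F[u_{j'},p]\cup P$, which passes through $x$ because $u_{j'}$ and $p$ lie on different legs. Partitioning $U$ into exactly these two pairs avoids ever invoking Lemma~\ref{cros11} on a pair like $\{u_4,u_j\}$, where the connecting path stays on one leg and may miss $x$. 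Condition (iii) of a good set is deliberately phrased to allow a nontrivial partition precisely so that this choice is available; your argument would need to either justify the trivial partition (which, as above, it cannot in general) or replicate the paper's pairing. The rest of your outline — $|W|=4$ via Lemma~\ref{T+}(i), $d_C(x)\le t$, and absence of CONs via Lemma~\ref{cross0} — is fine.
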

\begin{proof}
We have $T \subseteq \widetilde{T}$. 
Suppose $| \widetilde{T}|\geq 4$. Choose a set $U=\{u_1,\ldots,u_4\}\subseteq  \widetilde{T}$ so that
 $T=\{u_1,u_2,u_3\}$, and $u_4\in  \widetilde{T}-T$.
  Let $P$ be a shortest path from $u_4$ to $F-C$ in $G[D+u_4]$.
Let $j\in [3]$ be such  that the end, $p$, of $P$ distinct from $u_4$ belongs to the $x,u_j$-path in $F$. Assume 
$[3]=\{j,j',j''\}$.
The path $u_{j'}F[u_{j'}
, p] pPu_4$ contains an internal vertex in $X$ (namely, $x$). Partition $U$ into $U'=\{u_4,u_{j'}\}$ and $U''=\{u_j,u_{j''}\}$.

By  Lemma~\ref{cros11}, each of the pairs $U'$ and $U''$ has at most one crossing in $C$. 
Since 
$F$ is a largest $x, C$-fan,
 $x$ has at most $t$ neighbors in $C$.
 By  Lemma~\ref{cross0},  no two distinct vertices in $X^+(U) \cup \{x\}$ have a CON. This certifies that $X^+(U)$ is good, a contradiction to Lemma~\ref{good}.
\end{proof}


\begin{remark}Lemma~\ref{td3} implies that $T = \widetilde{T}$, i.e.,  the only vertices in $C$ with neighbors in $D$ are the vertices of $T$. In particular, no vertex in $V(C) - T$ has a CON with $x$. 

\end{remark}
\subsection{More structure and fewer crossings}\label{rc}
One of the results of this section is that for any best triple $(C, x, F)$, no vertices in $X^+(T)$ cross in $C$. 
Recall that by Lemma~\ref{t3}, $|T| = |V(F) \cap V(C)| =3$. 

A component $D$ of $ V(G)-C$ is {\bf 2-rich} if there is a set $U=\{u_1,u_2,u_3\}= V(C)\cap N(D)$ such that for all distinct $i,j$,
$D$ contains a $u_i,u_j$-path with at least two internal vertices in $X$.


\begin{lem}\label{t3r} If $|T\cap X|\leq 1$,
 then $D$ is $2$-rich.
\end{lem}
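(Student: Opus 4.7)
By Lemma~\ref{td3}, the set $U$ in the definition of 2-rich must be $T=V(C)\cap N(D)$, so I must exhibit, for each pair $i\ne j$, a $u_i,u_j$-path in $D\cup\{u_i,u_j\}$ with at least two internal $X$-vertices. The fan $F$ supplies natural candidates: if $L_i$ denotes the $x,u_i$-path of $F$, then the concatenation $L_i^{\mathrm{rev}}+L_j$ is a $u_i,u_j$-path in $D\cup\{u_i,u_j\}$ with exactly $a_i+a_j-1$ internal $X$-vertices, where $a_i:=|X\cap V(L_i)\setminus\{u_i\}|\ge 1$. The lemma is immediate for the pair $(u_i,u_j)$ unless $a_i+a_j\le 2$, i.e.\ $a_i=a_j=1$---call such a pair \emph{bad}. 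By bipartiteness, $a_i=1$ forces $L_i=xu_i$ when $u_i\in Y$ and $L_i=xy_iu_i$ (for some $y_i\in Y\cap D$) when $u_i\in X$.

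A useful preliminary is that $|X\cap D|\ge 2$: since $x$ has at most $|T|=3$ neighbors on $C$, it has $\ge\delta-3\ge 3$ neighbors in $Y\cap D$, and any such $y$ has at most $|T\cap X|\le 1$ neighbors in $V(C)$ yet degree $\ge 3$ in $G$ by 3-connectivity, giving $\ge 2$ neighbors in $X\cap D$. With this in hand, I first handle the main case, in which both endpoints of a bad pair lie in $Y$ (covering all pairs when $|T\cap X|=0$ and the pair $(u_2,u_3)$ when $|T\cap X|=1$). Here $xu_i,xu_j\in E(G)$, so $x\in N(u_i)\cap N(u_j)\cap D$. If there exist distinct $a\in N(u_i)\cap D\cap X$ and $a'\in N(u_j)\cap D\cap X$, then appending $u_i$ and $u_j$ to any $a,a'$-path in $D$ (existing since $D$ is connected) gives a $u_i,u_j$-path containing $a,a'$ as two internal $X$-vertices. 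Otherwise $N(u_i)\cap D=N(u_j)\cap D=\{x\}$; letting $u_k$ be the remaining vertex of $T$, the nonempty set $D\setminus\{x\}$ then has no neighbor in $V(C)\setminus\{u_k\}$ inside $G\setminus\{x,u_k\}$, making $\{x,u_k\}$ a 2-cut of $G$, contradicting 3-connectivity.

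The remaining case arises only when $|T\cap X|=1$, for a bad pair $(u_1,u_j)$ with $u_1\in X$ and $u_j\in Y$ (so $L_1=xy_1u_1$ and $L_j=xu_j$), where I need a $u_1,u_j$-path of length $\ge 5$. Using $|X\cap D|\ge 2$, I pick $x_a\in X\cap D\setminus\{x\}$ adjacent to $y_1$; such $x_a$ exists because $y_1$ has at least two neighbors in $X\cap D$, one of them being $x$. If $x$ and $x_a$ share a common neighbor $y_b\ne y_1$ in $Y\cap D$, then $u_1,y_1,x_a,y_b,x,u_j$ is the desired length-five path, using $x_a$ and $x$ as internal $X$-vertices. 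If $y_1$ is the unique common $Y\cap D$-neighbor of $x$ and $x_a$, then $y_1$ separates $x$ from $x_a$ in $D$, and a careful analysis of $D\setminus\{y_1\}$ combined with 3-connectivity (producing a 2-cut of the form $\{y_1,u_k\}$ or $\{x,u_k\}$, where $u_k\in\{u_2,u_3\}\setminus\{u_j\}$, exactly as in the main case) yields a contradiction. The principal technical obstacle is this last step: enumerating what $V(C)$-attachments the components of $D\setminus\{y_1\}$ can carry under the bad-pair hypothesis so that the desired 2-cut is forced---this is where the constraints $N(u_j)\cap D$ small (relative to the minimum degree condition) and the minimality of $|V(F)|$ come into play.
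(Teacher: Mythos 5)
Your main case (both $u_i,u_j\in Y$) and the preliminary $|X\cap D|\ge 2$ are correct, but the remaining case (bad pair $(u_1,u_j)$ with $u_1\in X$, $u_j\in Y$) has a genuine gap. The claim that ``if $y_1$ is the unique common $Y\cap D$-neighbour of $x$ and $x_a$, then $y_1$ separates $x$ from $x_a$ in $D$'' is false: $x$ and $x_a$ may be joined by a longer path in $D-y_1$ even when $y_1$ is their only common neighbour. That longer path would in fact finish this case (it yields $u_1,y_1,x_a,\ldots,x,u_j$ with at least two internal $X$-vertices), so the correct dichotomy is ``either $D-y_1$ contains an $x,x_a$-path, or $y_1$ separates them.'' More importantly, even with that correction the promised $2$-cut contradiction is not carried out---you explicitly flag it as ``the principal technical obstacle''---and the analogy with your main case does not transfer: there, $N(u_i)\cap D=N(u_j)\cap D=\{x\}$ immediately isolates $D-x$ behind $\{x,u_k\}$, whereas here $N(u_1)\cap D$ may contain several $Y$-vertices lying on either side of $y_1$, and one must track their placement and also rule out short $u_1,u_j$-paths through them. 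Until that enumeration is done, the proof is incomplete.

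The paper sidesteps this case entirely. It first observes that if some $y\in D\cap Y$ has no neighbour on $C$ (which, since $y\in Y$ and $\widetilde{T}=T$, means $y$ is not adjacent to the lone $X$-vertex of $T$), then every $u_i,u_j$-path routed through $y$ picks up an internal $X$-vertex on each side of $y$, so $D$ is $2$-rich; this reduces to the situation $T\cap X=\{u_3\}$ with every $y\in D\cap Y$ adjacent to $u_3$. It then picks a fresh $y'\in N(x)\setminus V(F)$ (available since $\delta>t=d_F(x)$), builds a $y',C$-fan $F'$ containing the edge $y'u_3$, and either reroutes using the universal $u_3$-adjacency of $D\cap Y$ to get $2$-richness, or shows that the second vertex $v_1$ of one branch has all its neighbours confined to $V(C)\cup V(P_2)$, forcing $P_2$ long enough that a cycle of length at least $2\delta>|C|$ appears. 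That reduction to ``every $y\in D\cap Y$ is adjacent to $u_3$'' is the idea your argument is missing.
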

\begin{proof}Suppose  $T=\{u_1,u_2,u_3\}$ where $u_1,u_2\in Y$. 
If some $y\in D\cap Y$ is not adjacent to $u_3$, then all $y,C$-paths contain internal vertices in $X$, and hence $D$ is $2$-rich. Thus we may assume that each $y\in D\cap Y$ is adjacent to $u_3$. In particular, $u_3\in X$.

By Rule (d) of  Definition~\ref{better}, $d_F(x)=t=3$, so because $\delta\geq |X|+1\geq t+3+1\geq 7$, there is
 $y'\in N(x)$ with $y'x\notin E(F)$. Since $G$ is $3$-connected, it contains a
$y',C$-fan  $F'$ with $3$ paths. Recall that $y'u_3$ is one of such paths.
For $i=1,2$, let $P_i$ be the $y',u_i$-path in $F'$ and $v_iy'\in E(P_i)$. Suppose that for $i=1,2$, there is $y_i\in N(v_i)-C-y'-P_{3-i}$ (possibly, $y_2=y_1$).
Then $D$ is $2$-rich: $P_1\cup P_2$ connects $u_1$ with $u_2$, and for $i\in \{1,2\}$, path $u_3y_iv_iy'P_{3-i}$ connects $u_3$ with $u_{3-i}$; and each of these three paths contains $\{v_1,v_2\}\subset X$. Hence by symmetry we may assume that every neighbor of $v_1$ is in $V(C)\cup P_2$. Note $N(v_1)\cap V(C)\subseteq\{u_1,u_2,u_3\}$, since $| \widetilde{T}|=3$. Then the cycle
$v_1y'P_2C[u_2,u_1]v_1$ has at least $2\delta$ vertices, a contradiction.
\end{proof}

\begin{lem}\label{cros001} Suppose $D$ is not 2-rich. For any $x' \in X \cap V(C)$, $G-x'$ has no cycle $C'$ such that
\begin{enumerate}
\item[(i)] $X\cap V(C')\supseteq X\cap V(C)-x'+x$, and
\item[(ii)] $C'$ contains the neighbors $y^+(x')$ and $y^-(x')$ of $x'$ on $C$.
\end{enumerate}
\end{lem}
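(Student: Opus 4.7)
My plan is to argue by contradiction: assume such a $C'$ exists, and produce a triple that strictly beats $(C,x,F)$ under Definition~\ref{better}. The first step is a counting observation. Because $C'$ lies in the bipartite graph $G-x'$, its $X$- and $Y$-parts have equal size; and by (i), $V(C')\cap X\supseteq (V(C)\cap X\setminus\{x'\})\cup\{x\}$, which has $\ell$ elements since $x\notin V(C)$. Hence $|C'|\ge 2\ell=|C|$. If the inequality is strict, then either $|C'|=2n$, which would be a $2n$-cycle in $G$ contradicting our standing counterexample assumption, or $|C'|<2n$ and some $x^\ast\in X\setminus V(C')$ exists; taking any $x^\ast,C'$-fan $F^\ast$ (guaranteed by 3-connectivity), the triple $(C',x^\ast,F^\ast)$ beats $(C,x,F)$ by rule (a). So I may assume $|C'|=2\ell$.

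Under $|C'|=2\ell$, the vertex $x'$ lies in $X\setminus V(C')$, so $(C',x',F')$ is a valid candidate triple for any $x', C'$-fan $F'$. 3-connectivity gives $t(x',C')\ge 3$, and a fan of size at least $4$ would make $(C',x',F')$ better than $(C,x,F)$ by rule (b); so I may assume $t(x',C')=3$.

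Next I would exhibit a size-$3$ fan $F'$ having $y^+(x')$ and $y^-(x')$ as endpoints of two of its paths. Starting from any size-$3$ fan $F^\ast=\{P_1,P_2,P_3\}$, by (ii) we have $y^+(x')\in V(C')$, and $x'y^+(x')\in E(G)$; if $y^+(x')\notin V(F^\ast)$ we could extend $F^\ast$ by the single edge $x'y^+(x')$ to a fan of size $4$, contradicting $t(x',C')=3$. So $y^+(x')\in V(F^\ast)$, and truncating the path containing it at $y^+(x')$ makes it a path endpoint in a new size-$3$ fan. Repeating the argument for $y^-(x')$ requires one subtlety: the newly truncated path might contain $y^-(x')$ internally, but truncating further there would expel $y^+(x')$ from the fan and again permit adding the edge $x'y^+(x')$ for a size-$4$ fan. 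Hence $y^-(x')$ must lie on a different path, which can be truncated to put $y^-(x')$ at its end. Both endpoints $y^\pm(x')$ lie in $Y$, so $|V(F')\cap V(C')\cap Y|\ge 2$.

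Finally I invoke the hypothesis that $D$ is not 2-rich. The contrapositive of Lemma~\ref{t3r} forces $|T\cap X|\ge 2$, hence $|V(F)\cap V(C)\cap Y|=|T\cap Y|\le 1$. Together with $|C'|=|C|$ and $t(x',C')=t(x,C)=3$, rule (c) of Definition~\ref{better} makes $(C',x',F')$ better than $(C,x,F)$, the desired contradiction. The main obstacle I expect is the two-step fan-truncation construction: verifying that $y^-(x')$ can be placed on a path distinct from the one carrying $y^+(x')$, without losing either vertex from the fan, is the one step that needs real care; everything else is bookkeeping against Definition~\ref{better}.
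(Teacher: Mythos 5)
Your proof is correct and follows essentially the same strategy as the paper's: reduce to $|C'|=|C|$ by maximality of $|C|$, exhibit a size-$3$ $x',C'$-fan $F'$ having both $y^+(x')$ and $y^-(x')$ as endpoints, and conclude via Lemma~\ref{t3r} and Definition~\ref{better}(c). The only real divergence is cosmetic: the paper \emph{constructs} $F'$ directly, taking the two edges $x'y^+(x')$, $x'y^-(x')$ together with a third $x',C'$-path avoiding $\{y^+(x'),y^-(x')\}$ (which exists by $3$-connectivity), whereas you argue indirectly from $t(x',C')=3$ that any maximum fan already has this form. A small clean-up: the ``truncation'' subtlety you spend care on is vacuous, because fan paths are by convention internally disjoint from $C'$ --- the paper's bookkeeping, e.g.\ $T=V(F)\cap V(C)$ with $|T|=t$ and the cycle-lengthening moves in Lemma~\ref{tX}, only makes sense under that convention --- so $y^+(x')$ and $y^-(x')$, being on $C'$, can occur in $F^\ast$ only as endpoints, and of distinct paths since they are distinct vertices; your argument reaches the same conclusion either way.
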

\begin{proof}
Suppose we have  $C'$ satisfying~(i) and~(ii). If we have strict containment in~(i), then $|C'|>|C|$, contradicting (a) in the choice of $(C, x, F)$.
Thus  $X\cap V(C')= X\cap V(C)-x'+x$. 

Let $D'$ be the component of $G - V(C')$ containing $x'$. Let $M$ be the set of neighbors of $D'$ on $C'$.
By (ii), $\{y^+(x'),y^-(x')\}\subset M$. Since $G$ is $3$-connected, $D'-\{y^+(x'),y^-(x')\}$
contains an $x',C'$-path $P$. Then $P$ together with the edges $x' y^+(x')$ and $x'y^-(x')$
forms an $x',C'$-fan $F'$ with $|V(F') \cap V(C') \cap Y| \geq 2$. Moreover since $D$ was not 2-rich, by Lemma~\ref{t3r}, $|V(F) \cap V(C) \cap Y| \leq 1$. So $(C', x', F')$ is a better triple than $(C, x, F)$, a contradiction.
\end{proof}

\begin{lem}\label{nocrossings}No two vertices in $X^+(T)$ cross in $C$.\end{lem}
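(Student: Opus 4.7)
The plan is to argue by contradiction using Lemma~\ref{good}: since $t = 3$ by Lemma~\ref{t3}, any good set of size $4$ contradicts Lemma~\ref{good} (which forbids $|W| \geq \max\{4,t\} = 4$). Assume two vertices $x_1 = x^+(u_1)$ and $x_2 = x^+(u_2)$ in $X^+(T)$ cross at some $x_3 \in V(C) \cap X$, and write $x_0 = x^+(u_3)$ for the third member of $X^+(T)$. The candidate good set is $W = \{x_0, x_1, x_2, x_3\}$.

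First I would make two reductions. To invoke Lemma~\ref{cross1} I need $P_D[u_1, u_2]$ to contain an internal $X$-vertex; otherwise this path has length $2$ and $u_1, u_2 \in X$, in which case I would convert the crossing at $x_3$ into a better triple $(C', x, F')$ by a rerouting argument modeled on the end of the proof of Lemma~\ref{neighborF0}. Similarly, the only way the four listed vertices fail to be distinct is $x_3 = x_0$, which by Lemma~\ref{cross1}(i) forces $u_3 \in Y$, and that case should again yield a better triple by a splicing argument using $P_D[u_3, u_j]$. With these reductions, good-set conditions (i) and (ii) are immediate: $d_C(x) \leq t = 3 < 4 = |W|$, and the pairwise absence of CONs in $\{x\} \cup W$ follows from Lemma~\ref{cross0}(i)--(ii) for $\{x\} \cup X^+(T)$, Lemma~\ref{cross1}(iii) for pairs involving $x_3$ and a member of $X^+(T)$, and the remark after Lemma~\ref{td3} (together with $x_3 \notin \widetilde T$ from Lemma~\ref{cross1}(i)) for the pair $(x, x_3)$.

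Condition (iii) is the main obstacle. The natural partition $\{x_1, x_2\} \cup \{x_0, x_3\}$ handles $\{x_1, x_2\}$ via Lemma~\ref{cros11}, but $\{x_0, x_3\}$ is not covered by prior lemmas since $x_3 \notin X^+(\widetilde T)$. To control it I plan to prove an ad-hoc analog of Lemma~\ref{cros11}: if $x_0$ and $x_3$ had two distinct crossings, then splicing $P_D[u_3, u_i]$ for the appropriate $i \in \{1,2\}$ into $C$ through these crossings — in the spirit of the cycle construction in Lemma~\ref{cross1}(ii) — should produce a cycle strictly longer than $C$, contradicting the choice of $(C, x, F)$. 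The degree bound $d_C(x_3) \leq t = 3$ from Lemma~\ref{cross1}(iv) keeps the case analysis tractable. Once this bound is in place, $W$ is a good set of size $4$, contradicting Lemma~\ref{good}.
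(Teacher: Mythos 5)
Your route is genuinely different from the paper's, which does not use a good-set argument for this lemma at all. The paper builds a single explicit cycle $C'$ by rerouting $C$ around the crossing through $P_D[u_i,u_j]$ (so that $x_3$ is skipped), and then finishes by a dichotomy: if $D$ is 2-rich then $C'$ is strictly longer than $C$, and otherwise Lemma~\ref{cros001} --- set up immediately before this lemma for exactly this purpose --- gives a contradiction. You never invoke Lemma~\ref{cros001}, Lemma~\ref{t3r}, or the 2-rich structure.

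The crucial step of your plan is missing. Condition~(iii) of a good set requires partitioning $W=\{x_0,x_1,x_2,x_3\}$ into parts of size at least $2$ with at most one internal crossing per pair, and every such partition places $x_3$ in a part with some $x_k\in X^+(T)$; but $x_3\notin X^+(\widetilde T)$ (if $x_3=x^+(u)$ with $u\in\widetilde T$, Lemma~\ref{cross1}(i) forces $u\in Y$), so Lemma~\ref{cros11} does not apply to any pair containing $x_3$, and no lemma proved so far controls those crossings. Your proposed ``ad-hoc analog of Lemma~\ref{cros11}'' is not carried out, and it is not clear it can be: the proof of Lemma~\ref{cros11} reroutes both endpoints through $D$ via their corresponding $u$'s, whereas $x_3$ has no such access to $D$. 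The degenerate case $x_3=x_0$, which collapses $|W|$ to three elements, is likewise only gestured at. (Your first reduction is vacuous: for $u_1,u_2\in T$, the path $F[u_1,u_2]$ always has $x$ as an internal $X$-vertex, and $P_D[u_1,u_2]$ is at least as long, so by parity it always has an internal $X$-vertex.) The missing tool is Lemma~\ref{cros001}, which the paper proved precisely to dispatch this lemma in one step.
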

\begin{proof}Suppose $x_i = x^+(u_i)$ and $x_j=x^+(u_j)$ cross at some vertex $x_0 \in V(C) \cap X$. By symmetry, we may assume that their  cyclic order is $x_i, x_0, x_j$. Let 
\[
	C' := x_i C[x_i y^-(x_0)] y^-(x_0) x_j C[x_j, u_i] u_i P_D[u_i, u_j] u_j C^-[u_j, y^+(x_0)] y^+(x_0) x_i.
\] 
If $D$ is 2-rich, then $P_D[u_i, u_j]$ has at least 2 internal vertices in $X$, and so $C'$ is longer than $C$. If $D$ is not 2-rich, then $C'$ satisfies conditions (i) and (ii) of Lemma~\ref{cros001}, a contradiction.
\end{proof}

Let $e$ be an edge of $C$, let $u,v \in V(C)$, and let $P$ be any $u,v$-path containing $e$, which we orient from $u$ to $v$. We say that $P$ and $C$ \emph{agree on} the edge $e$ if the orientation of $e$ (oriented from $u$ to $v$) in the $u,v$-segment of $C$ containing $e$ is the same as the orientation of $e$ in $P$. 

\begin{lem}\label{path-to-crossings}
Let $u,v \in X \cap V(C)$. Suppose that there is a $u,v$-path $P$ with $(X \cap V(C)) \cup \{x\}\subseteq V(P)$ and there exists some $z, z' \in V(P)$ such that $V(P) \cap V(D) = V(P[z, z'])$, i.e., $P$ enters and leaves $D$ exactly once. Then
\begin{enumerate}
\item[(i)] $u$ and $v$ have no common neighbor outside of $P$, and 
\item[(ii)] if $P$ and $C$ agree on an edge $e$, then $u$ and $v$ cannot have a crossing at an endpoint of $e$.
\end{enumerate}

\end{lem}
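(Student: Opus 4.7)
The overall strategy for both parts is to contradict rule~(a) of Definition~\ref{better} (the maximality of $|V(C)|$) by exhibiting a cycle in $G$ strictly longer than $C$.

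\emph{Part (i).} Suppose for contradiction that $y\in (N(u)\cap N(v))\setminus V(P)$. Since $G$ is bipartite and $u,v\in X$, we have $y\in Y$. Close $P$ via the edges $vy$ and $yu$ to form
\[
C' := u\,P\,v\,y\,u.
\]
Since $y\notin V(P)$, this is a simple cycle with $V(C')=V(P)\cup\{y\}$. Because $P\supseteq (X\cap V(C))\cup\{x\}$ with $x\notin V(C)$, we have $|X\cap V(C')|\geq \ell+1$, so $|V(C')|\geq 2\ell+2>|V(C)|$, the desired contradiction.

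\emph{Part (ii).} Suppose for contradiction that $u,v$ cross at a vertex $x_0$ which is an endpoint of $e$. Crossings occur only at $X$-vertices, so $x_0\in X$. By the symmetry that swaps $u\leftrightarrow v$ together with the orientation of $C$, I may assume the cyclic order on $C$ is $u,x_0,v$ and $uy^+(x_0),vy^-(x_0)\in E(G)$. I treat the case $e=x_0y^+(x_0)$; the case $e=x_0y^-(x_0)$ is analogous upon reversing $P$. The segment $C[u,v]$ contains both edges of $C$ incident to $x_0$, and its $u$-to-$v$ orientation traverses $e$ in the direction $x_0\to y^+(x_0)$. The agreement hypothesis then forces the same orientation on $e$ in $P$, so $y^+(x_0)$ is the immediate successor of $x_0$ in $P$.

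I form the closed walk
\[
C^* := u\,y^+(x_0)\,P[y^+(x_0),v]\,v\,y^-(x_0)\,x_0\,P[u,x_0]^{-1}\,u,
\]
combining the two crossing edges, the edge $y^-(x_0)x_0\in E(C)$, and the two pieces of $P$ flanking $e$. If $y^-(x_0)\notin V(P)$, then $C^*$ is a simple cycle with $V(C^*)=V(P)\cup\{y^-(x_0)\}$, hence $|V(C^*)|=2\ell+2>|V(C)|$, the desired contradiction.

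The main obstacle is the case $y^-(x_0)\in V(P)$, where $C^*$ fails to be simple (it either repeats the vertex $y^-(x_0)$ or, in the special subcase $y^-(x_0)x_0\in E(P)$, repeats that edge). In this case I shortcut $C^*$ at $y^-(x_0)$ to obtain a simple cycle $C^\dagger$ of length $2\ell$ with $X\cap V(C^\dagger)=(X\cap V(C))-w+x$ for some $w\in\{x_0,v\}$ (namely, $w$ is the $X$-endpoint on whichever side of $e$ in $P$ contains $y^-(x_0)$). Using the agreement hypothesis to pin down the location of $y^-(x_0)$ in $P$, I verify that both $y^+(w)$ and $y^-(w)$ lie in $V(C^\dagger)$, so Lemma~\ref{cros001}, applied with $x':=w$, forces $D$ to be 2-rich. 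Having 2-richness, I conclude by exploiting a $t_z,t_{z'}$-path $Q$ in $D$ with at least two internal $X$-vertices to augment the $D$-portion $P[z,z']$ of $P$, gaining an extra $X$-vertex, and reassembling this richer path with the crossing edges into a simple cycle of length at least $2\ell+2$, the final contradiction. The main technical difficulty is precisely this augmentation step: verifying that the 2-rich path $Q$ can be chosen vertex-disjoint from $V(P)\setminus V(P[t_z,t_{z'}])$ and inserted so as to preserve simplicity of the resulting cycle.
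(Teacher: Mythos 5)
Your Part~(i) matches the paper's one-line argument. For Part~(ii), your overall template (close up $P$ via the two crossing edges, then win outright or invoke Lemma~\ref{cros001} to force $D$ to be $2$-rich and augment) is the same as the paper's, but your case split leaves a hole. You split on whether $y^-(x_0) \in V(P)$, with the sub-case $y^-(x_0)x_0 \in E(P)$ singled out; Case~A and that sub-case are fine. But in the residual case --- $y^-(x_0) \in V(P)$ while $y^-(x_0)x_0 \notin E(P)$ --- the shortcut at the repeated vertex $y^-(x_0)$ does \emph{not} in general give a cycle $C^\dagger$ with $X \cap V(C^\dagger) = (X \cap V(C)) - w + x$. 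If $y^-(x_0)$ lies on $P[u,x_0]$ but is not the $P$-predecessor of $x_0$, the shortcut discards the whole $P$-segment from $y^-(x_0)$ to $x_0$, which contains $x_0$ and at least one further $X$-vertex (since $y^-(x_0)$ and $x_0$ are not $P$-adjacent and $P$ alternates), so $|X \cap V(C^\dagger)| < \ell$ and hypothesis~(i) of Lemma~\ref{cros001} fails. If instead $y^-(x_0)$ lies on $P[y^+(x_0),v]$, the discarded segment $P[y^-(x_0),v]$ need not be a single edge, and moreover you never verify that $y^+(v)$ and $y^-(v)$ remain on $C^\dagger$, which Lemma~\ref{cros001}(ii) demands. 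Your remark that the agreement hypothesis ``pins down the location of $y^-(x_0)$ in $P$'' is also wrong: agreement fixes only $y^+(x_0)$ as the $P$-successor of $x_0$ and says nothing about where $y^-(x_0)$ sits.

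The paper's split --- on whether \emph{both} $C$-edges at the crossing vertex $w$ lie in $E(P)$ --- sidesteps this cleanly. When both do, closing up via the two crossing edges removes exactly $w$, and both $y^-(w)$ and $y^+(w)$ remain on the new cycle precisely because they are $w$'s two $P$-neighbors, so Lemma~\ref{cros001}(ii) holds automatically. When only one does, the paper adjoins the other $C$-neighbor of $w$ to $V(P)$, obtaining a cycle with at least $\ell+1$ vertices of $X$ and winning outright; the simplicity of that cycle (i.e.\ that the adjoined $Y$-vertex is not already in $V(P)$) is left tacit, and that tacit scenario is exactly the one your last case is attempting, unsuccessfully, to cover. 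As written, your proof of~(ii) does not close.
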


\begin{proof}
Note that $x \in P[z,z']$.  If $u$ and $v$ had a common neighbor outside $P$, then we could extend $P$ to a cycle longer than $C$, so (i) holds.

To prove (ii), suppose that $P$ and $C$ agree on an edge $e$ which lies on $C[u,v]$, $w \in X \cap V(C)$ is an endpoint of $e$, and $u$ and $v$ cross at $w$. Suppose that the edges of $C[u,v]$ incident to $w$ are $y'w$ and $wy''$, so that $uy''$ and $vy'$ are the two edges forming $u$ and $v$'s crossing on $w$. Without loss of generality, $e = y'w$. The condition that $P$ and $C$ agree on $e$ guarantees that $P[u,w]$ contains $y'$.

There are two cases to consider: either both $y'w$ and $wy''$ are edges of $P$, or just $y'w$. 


In the first case, let $C' := uP[u, y'] y'v P[v, y''] y'' u$. Then $V(C') \supseteq V(C)  - \{w\}+\{x\}$. If we have strict containment, then $|C'| > |C|$, a contradiction. So we may assume $V(C') \cap X = (V(C) \cap X) -\{w\} + \{x\}$. Observe that $C'$ satisfies Lemma~\ref{cros001} for $x' = w$. So $D$ is 2-rich. Let $a$ be the vertex in $P$ preceding $z$ and $a'$ the vertex in $P$ succeeding $z'$ (so $a, a' \in V(C)$). Let $P'$ be a $a,a'$-path internally disjoint from $C$ that contains at least 2 internal vertices in $X$. Let $C''$ be obtained by replacing in $C'$ the segment $P[a, a']$ with $P'$. We have  $|V(C'') \cap X| > |X + \{x\} - \{w\}|$. Therefore $|C''| > |C|$, a contradiction.

In the second case, the cycle $uP[u,y']y'vP[v,w]wy''u$ is longer than $C$, since it contains all of $X \cap V(C)$ as well as $x$,  a contradiction.
\end{proof}

\medskip

\section{Handling the case $\widetilde{t}=3$}\label{sec4}

\subsection{Short, medium, and long-type configurations}\label{types}

We continue to study properties of a best triple $(C,x, F)$. 
Recall that 
 by Lemma~\ref{td3},  $\widetilde{t}=|\widetilde{T}| = 3$, so  we will assume that 
%
  $N(D)\cap V(C)=\{u_{1},u_{2},u_{3}\}$. Partition $V(C)- \{u_1,u_2,u_3\}$ into $U_1$, $U_2$ and $U_3$, where for $i\in [3]$, 
$U_i=V(C[u_i,u_{i+1}])- \{u_i, u_{i+1}\}$, i.e.\ $U_i$ is the set of vertices on $C$ from $u_{i}$ to $u_{i+1}$ not including either endpoint. Here and in the remainder of the paper, we let the indices on $D$'s neighbors wrap around modulo $3$, so that, for example, $u_{0} = u_{3}$ and $u_{4} = u_{1}$.

Let $X_i = U_i \cap X$ and $Y_i = U_i \cap Y$. For $j > 0$, let $x_{i,j}$ be the $j$\textsuperscript{th} vertex in $X_i$ clockwise; let $x_{i,-j}$ be the $j$\textsuperscript{th} vertex in $X_{i-1}$ counterclockwise. For example, $x_{i,1} = x^+(u_i)$ and $x_{i,-1} = x^-(u_i)$. Define $y_{i,j}$ similarly.

One of the lines of attack in this section is trying to find a $4$-element good subset of $X^+(T) \cup X^-(T)$, which will contradict  Lemma~\ref{good}. 
This will not work if several of these vertices have many CONs. We will classify the obstacles to this approach into three types. For each $i \in [3]$, we say that:
\begin{itemize}
\item $i$ has \emph{short type} if $x_{i,-1}$ and $x_{i,1}$ have a CON.
\item $i$ has \emph{medium type} if $x_{i,1}$ and $x_{i+1,-1}$ have a CON.
\item $i$ has \emph{long type} if $x_{i,-1}$ and $x_{i+1,1}$ have a CON.
\end{itemize}
These three configurations are shown in Figure~\ref{fig:sml}.

\begin{figure}
    \centering
    \includegraphics[scale=.55]{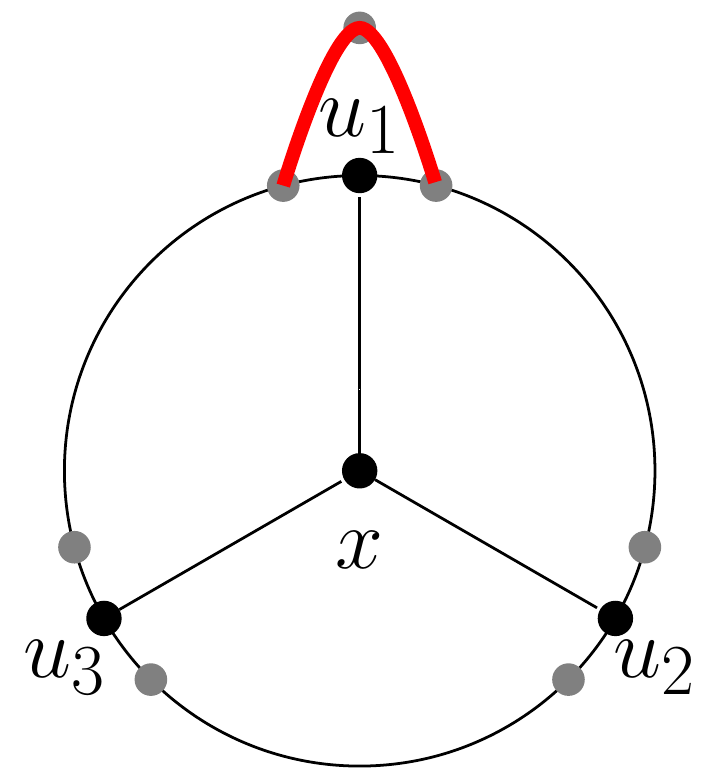}
    \includegraphics[scale=.55]{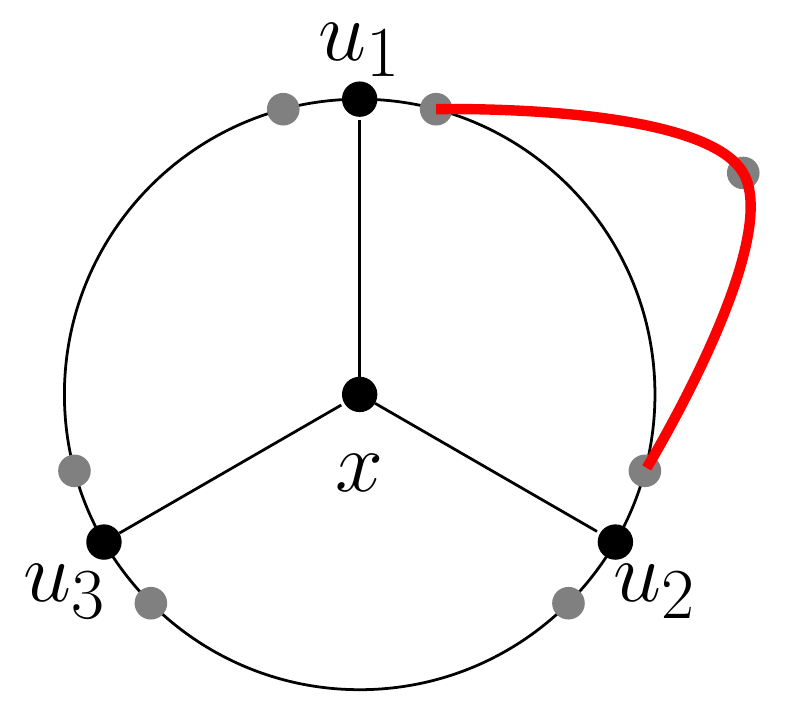}
    \includegraphics[scale=.55]{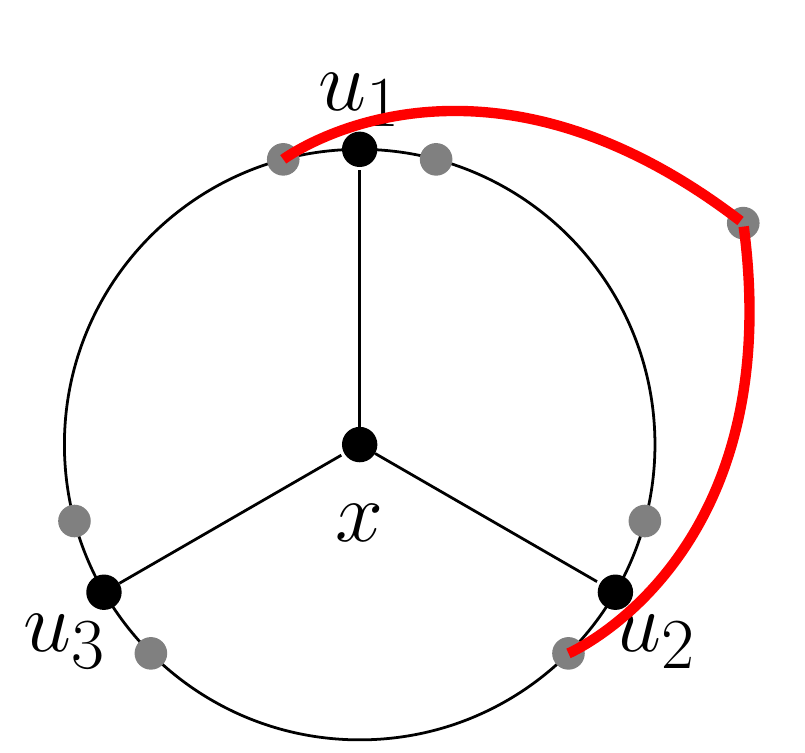}
    \caption{Short-type, medium-type, and long-type configurations.}
    \label{fig:sml}
\end{figure}

We first prove that each segment $U_i$ contains at least two vertices in $X$.

\begin{lem}\label{sum8}  For any $x' \in X^+(T)$, $d_C(x) + d_C(x') \geq 8$. In particular, $d_C(x') \geq 5$.
\end{lem}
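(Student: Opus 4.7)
The plan is to apply a disjoint-neighborhood count to the four-vertex set $S = \{x, x_{1,1}, x_{2,1}, x_{3,1}\}$, where $x_{i,1} = x^+(u_i)$ and, without loss of generality, $x' = x_{1,1}$. To verify $|S|=4$: $x \notin V(C)$ is immediate, and the three $x_{i,1}$'s lie in the pairwise disjoint arcs $U_i$, each of which must contain an $X$-vertex. Otherwise, by the bipartite parity of $C$, such an arc $U_i$ is empty or a single $Y$-vertex, which forces $u_{i+1} \in \{x^+(u_i), y^+(u_i)\}$; combined with $u_{i+1} \in T = \widetilde{T}$ this contradicts Lemma~\ref{neighborF0} or Lemma~\ref{T+}(i).

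By Lemma~\ref{cross0}, no two members of $S$ share a common neighbor outside $V(C)$. Since each $u \in S$ lies in $X$ and has at least $\delta$ neighbors, all in $Y$, the sets $N(u) \setminus V(C)$ are pairwise disjoint subsets of $Y \setminus V(C)$. Hence
\[
m - \ell \;=\; |Y \setminus V(C)| \;\geq\; \sum_{u \in S}\bigl(\delta - d_C(u)\bigr) \;=\; 4\delta - d_C(x) - \sum_{i=1}^{3} d_C(x_{i,1}).
\]
Using $m \leq 4\delta - 10$ and rearranging gives
\[
d_C(x) + \sum_{i=1}^{3} d_C(x_{i,1}) \;\geq\; \ell + 10.
\]

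By Lemma~\ref{nocrossings}, no two vertices of $X^+(T)$ cross in $C$, so Lemma~\ref{cros12} applied (with $a=0$) to the pair $x_{2,1}, x_{3,1}$ yields $d_C(x_{2,1}) + d_C(x_{3,1}) \leq \ell + 2$. Subtracting,
\[
d_C(x) + d_C(x') \;=\; d_C(x) + d_C(x_{1,1}) \;\geq\; (\ell + 10) - (\ell + 2) \;=\; 8.
\]
For the ``in particular'' half, every edge from $x$ to $Y \cap V(C)$ is a length-$1$ $x, V(C)$-path, so $t \geq d_C(x)$; since $t = 3$ by Lemma~\ref{t3}, we conclude $d_C(x') \geq 8 - 3 = 5$.

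The argument is essentially bookkeeping once the correct four-vertex set is identified; the only mildly delicate step is establishing $|X^+(T)| = 3$, which, as sketched, falls out of a brief bipartite-parity argument combined with Lemmas~\ref{T+} and~\ref{neighborF0}.
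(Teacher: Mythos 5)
Your proof is correct and follows essentially the same approach as the paper: bound $d_C(x_{2,1})+d_C(x_{3,1})\leq \ell+2$ via Lemmas~\ref{nocrossings} and~\ref{cros12}, then count the pairwise-disjoint (by Lemma~\ref{cross0}) neighborhoods of $\{x,x_{1,1},x_{2,1},x_{3,1}\}$ outside $C$ against $m\leq 4\delta-10$. The only difference is cosmetic — you run the inequality forwards rather than by contradiction, and you explicitly verify $|X^+(T)|=3$ via Lemmas~\ref{T+}(i) and~\ref{neighborF0} (a fact the paper's proof uses implicitly); both are sound.
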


\begin{proof} 
Suppose $x'=x_{1,1}$ and $d_C(x) + d_C(x') \leq 7$. No two vertices in
the set $X^+(T) \cup \{x\}$  have a CON or cross in $C$. By Lemmas~\ref{nocrossings} and~\ref{cros12}, $d_C(x_{2,1}) + d_C(x_{3,1}) \leq \ell + 2$. Therefore 
\[
	|Y| \geq 4 \delta - (d_C(x) + d_C(x_{1,1})) - (d_C(x_{2,1}) + d_C(x_{3,1})) + \ell
		\geq 4\delta - 7 - (\ell + 2) + \ell = 4\delta - 9.
\]
This  contradiction proves $d_C(x) + d_C(x') \geq 8$. Since $d_C(x) \le t = 3$, $d_C(x') \geq 5$.
\end{proof}

\begin{lem}\label{shortsegment}
For each $i \in [3]$, $x_{i,1} \neq x_{i+1, -1}$.\end{lem}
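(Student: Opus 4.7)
I would work by contradiction. Suppose $x_{i,1} = x_{i+1,-1}$ for some $i \in [3]$; by cyclic symmetry take $i = 1$ and write $x_1$ for the common vertex. Then $X_1 = \{x_1\}$, so (by bipartite parity) $|U_1| \in \{1, 2, 3\}$ depending on whether $u_1, u_2$ lie in $X$ or $Y$. Lemma~\ref{sum8} gives $d_C(x_1) \geq 5$.

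The argument would split on $|T \cap X|$. If $|T \cap X| \leq 1$, then Lemma~\ref{t3r} gives that $D$ is $2$-rich, so there is a $u_1, u_2$-path $P$ through $D$ with at least two internal $X$-vertices. A short bipartite parity check shows $|V(P)| - 2 \geq |U_1| + 2$ in each of the three sub-cases for $(u_1, u_2) \in \{X, Y\}^2$, so replacing the segment $U_1$ of $C$ by $P$ yields a cycle $C'$ with $|C'| > |C|$, contradicting maximality of $|C|$ in the choice of a best triple.

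If $|T \cap X| \geq 2$, then $|T \cap Y| \leq 1$; and since $N(x) \cap V(C) \subseteq \widetilde{T} \cap Y = T \cap Y$, we get $d_C(x) \leq 1$. Lemma~\ref{sum8} then forces $d_C(x_{j,1}) \geq 7$ for each $j \in [3]$, and the pair bound from Lemmas~\ref{nocrossings} and~\ref{cros12} gives $d_C(x_{j,1}) + d_C(x_{k,1}) \leq \ell + 2$, forcing $\ell \geq 12$. The plan here is to combine Lemma~\ref{cross0} (no CON among $X^+(T) \cup \{x\}$) with its symmetric analog on $X^-(T) \cup \{x\}$: since $x_{1,1} = x_{2,-1}$ gives $x_1 \in X^+(T) \cap X^-(T)$, the set $X^+(T) \cup X^-(T) \cup \{x\}$ has exactly $6$ distinct vertices. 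A doubled degree count (using that each vertex has degree $\geq \delta$ and the pair bounds $S^\pm \leq 3(\ell+2)/2$ within each of $X^+(T), X^-(T)$) yields $|Y| \geq 6\delta - 2\ell - 6$; since $\delta \geq \ell + 1$, this strictly exceeds $4\delta - 9$, contradicting the hypothesis $|Y| \leq 4\delta - 10$.

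The hardest part will be controlling the common neighbors outside of $C$ on the $4$ ``cross'' pairs between $X^+(T)$ and $X^-(T)$: these are precisely the short, medium, and long type CONs defined just above the lemma. The degeneracy $x_{1,1} = x_{2,-1}$ automatically precludes the medium type $1$ CON, leaving long type $1$, medium type $2$, medium type $3$, and short type $3$; one must show that the overcounting they cause in the doubled degree count is small enough to preserve the strict inequality $|Y| > 4\delta - 9$.
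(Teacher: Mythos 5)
Your case split ($|T\cap X|\le 1$ vs.\ $|T\cap X|\ge 2$) maps via Lemma~\ref{t3r} onto the paper's split (``$D$ is $2$-rich'' vs.\ ``$D$ is not $2$-rich''), and your first case is essentially the paper's: replace the short segment $U_i$ by a $u_i,u_{i+1}$-path through $D$ with two internal $X$-vertices and obtain a longer cycle. So far so good.

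Your second case, however, takes a genuinely different route and, as written, has a real gap. You propose a degree count over $X^+(T)\cup X^-(T)\cup\{x\}$, but you yourself flag the crux: you must control CONs between $X^+(T)$ and $X^-(T)$. That is not a small overcount to be absorbed; these are exactly the short/medium/long-type configurations that the rest of Section~4 of the paper spends several subsections analyzing. There is no short argument that bounds them at this stage of the paper, and the degeneracy $x_{i,1}=x_{i+1,-1}$ does not preclude, say, a short-type CON at $i$ or long-type CONs involving the other segments. Moreover, the arithmetic does not obviously close even if all cross-CONs were absent: the claimed bound $|Y|\ge 6\delta-2\ell-6$ is asserted rather than derived, and when I attempt the doubled count with $|N(v)-C|\ge\delta-d_C(v)$ and the pairwise bound $d_C(u)+d_C(v)\le\ell+2$ from Lemmas~\ref{nocrossings} and~\ref{cros12}, I land around $|Y|\ge 4\delta+2-\ell$, which is not strictly greater than $4\delta-9$ once $\ell\ge 12$ (as you correctly deduced). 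So the numerics and the CON control are both unresolved.

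The paper closes case~2 much more economically, by exploiting the best-triple tie-break rather than a degree count. Take the same bypass cycle $C':=u_iF[u_i,u_{i+1}]u_{i+1}C[u_{i+1},u_i]u_i$. It still satisfies $|C'|\ge|C|$ (it keeps all of $X\cap V(C)$ except $x_{i,1}$ and gains $x$). Since $D$ is not $2$-rich, $|C'|=|C|$, and by Lemma~\ref{t3r}, $|T\cap Y|\le 1$, so $d_C(x)\le 1$. Now apply the optimality of $(C,x,F)$ to the triple $(C',x_{i,1},F')$: the tie-break on $|V(F')\cap V(C')\cap Y|$ forces $d_{C'}(x_{i,1})\le 1$. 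Since $V(C)-V(C')\subseteq\{x_{i,1},y^-(x_{i,1}),y^+(x_{i,1})\}$, this gives $d_C(x_{i,1})\le 1+2=3$, contradicting Lemma~\ref{sum8}. This is the idea you are missing: the bypass cycle is useful not only when it is strictly longer, but also when equal length, because then $x_{i,1}$ becomes an exterior vertex and the third tie-break in Definition~\ref{better} caps its $C'$-degree.
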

\begin{proof}
%
%
%
%

Let $C' = u_i F[u_i, u_{i+1}] u_{i+1}C[u_{i+1}, u_i] u_i$. Then $|C'| \geq |C|$. If the component $D$ of $G - C$ containing $x$ is 2-rich, then $|C'| > |C|$. So by Lemma~\ref{t3r}, $|T \cap Y| \leq 1$, and hence $d_C(x) \leq 1$. By the choice of $(C,x,F)$ as a best triple,  $d_{C'}(x_{i,1}) \leq 1$ as well.  Since $V(C) -\{y^-(x_{i,1}), y^+(x_{i,1}),x_{i,1}\} \subseteq V(C')$,  $N_C(x_{i,1}) \subseteq N_{C'}(x_{i,1}) \cup \{y^-(x_{i,1}), y^+(x_{i,1})\}$, and therefore $d_{C}(x_{i,1}) \leq 1 + 2$. This contradicts Lemma~\ref{sum8}.
\end{proof}

It is possible that some segments $U_i$ contain only two vertices of $X$, but in that case, we can deduce some additional structure we will use later.

\begin{lem}\label{shortsegment2}
For each $i \in [3]$, if $x_{i,2} = x_{i+1,-1}$, then $i+1$ does not have short or long type.
\end{lem}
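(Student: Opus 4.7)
The plan is to argue by contradiction: assume $x_{i,2}=x_{i+1,-1}$ and that $i+1$ has short or long type, furnishing a CON $y^*$ of $w:=x_{i,2}$ with some $z\in\{x_{i+1,1},x_{i+2,1}\}$. Since $x_{i,2}=x_{i+1,-1}$, the segment $U_i$ contains exactly the two $X$-vertices $x_{i,1}$ and $w$, so $|U_i|\in\{3,4,5\}$ depending on the parities of $u_i$ and $u_{i+1}$. I aim to produce a cycle $C'$ that is either strictly longer than $C$ (contradicting the maximality of $|C|$ in the best triple $(C,x,F)$) or violates Lemma~\ref{cros001}.

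For the short-type case ($z=x_{i+1,1}$) the first step is to form the cycle
\[
C' = w\,y^*\,x_{i+1,1}\,C[x_{i+1,1},u_i]\,u_i\,P_D[u_i,u_{i+1}]\,u_{i+1}\,C^-[u_{i+1},w]\,w,
\]
in which $y^*$ shortcuts the short clockwise arc from $w$ through $u_{i+1}$ to $x_{i+1,1}$ while the $P_D$-detour reinserts $u_{i+1}$. A routine vertex count should give $|C'|-|C|=p-|U_i|+c$, where $p$ is the number of internal vertices of $P_D[u_i,u_{i+1}]$ and $c\in\{2,3\}$ according as $u_{i+1}\in Y$ or $u_{i+1}\in X$. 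For the long-type case ($z=x_{i+2,1}$) the same template applies with $P_D[u_i,u_{i+2}]$ in place of $P_D[u_i,u_{i+1}]$ and $u_{i+1}$ reinserted via its $C$-edge to $w$.

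The easy step is the 2-rich subcase, which by Lemma~\ref{t3r} covers $|T\cap X|\leq 1$: 2-richness forces $P_D$ to carry at least two internal $X$-vertices together with interleaved $Y$-vertices, giving $p\geq|U_i|-c+1$ in every parity sub-case, hence $|C'|>|C|$, the desired contradiction.

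The main obstacle will be the remaining case $|T\cap X|\geq 2$, where $P_D$ may be too short for a direct length bound. Here $|T\cap Y|\leq 1$, so $d_C(x)\leq 1$, and Lemma~\ref{sum8} forces $d_C(z)\geq 7$. The plan is to exploit this abundance of neighbors of $z$ on $C$ to reroute $C'$ so that it omits $x_{i,1}$, passes through both $y^+(x_{i,1})$ and $y^-(x_{i,1})$, and incorporates $x$ via an $F$-detour; the resulting cycle then satisfies the hypotheses of Lemma~\ref{cros001} with $x'=x_{i,1}$, yielding the required contradiction. Verifying uniformly across the parity sub-cases for $u_i,u_{i+1},u_{i+2}\in\{X,Y\}$ that the rerouted cycle still covers $(X\cap V(C)\setminus\{x_{i,1}\})\cup\{x\}$ and genuinely avoids $x_{i,1}$ is the delicate step.
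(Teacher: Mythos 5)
Your cycle construction for both the short- and long-type cases is essentially the same as the paper's (the paper uses the fan-path $F[u_i,u_{i+1}]$ in place of your $P_D[u_i,u_{i+1}]$, which matters because the cycle should contain $x$ so that a best-triple comparison applies, but this is a minor point). Your handling of the 2-rich subcase also matches: a 2-rich $D$ provides enough internal vertices to make $|C'|>|C|$ outright.

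The gap is in your plan for the non-2-rich case. You propose to \emph{reroute} $C'$ so that it contains both $y^+(x_{i,1})$ and $y^-(x_{i,1})$ while omitting $x_{i,1}$ and including $x$, and then invoke Lemma~\ref{cros001}. But there is no reason such a rerouting should exist: $y^+(x_{i,1})=y^-(x_{i,2})$ is a $Y$-vertex, has no minimum-degree guarantee, and may well be adjacent only to $x_{i,1}$ and $x_{i,2}$ on all of $G$, in which case no cycle avoiding $x_{i,1}$ can contain it. The ``abundance of neighbors of $z$'' you cite controls $N(z)$, not $N(y^+(x_{i,1}))$, so it does not produce the needed detour. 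You acknowledge this step is ``delicate,'' and indeed it is the missing idea.

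The paper sidesteps rerouting entirely. After observing $|C'|\ge|C|$ and that $D$ is not 2-rich, it notes $|T\cap Y|\le 1$ (Lemma~\ref{t3r}), hence $d_C(x)\le 1$, and then uses the \emph{best-triple} comparison directly on $(C',x_{i,1},F')$: since $|C'|=|C|$ and $t(x_{i,1},C')=3$, criterion (c) of Definition~\ref{better} forces $d_{C'}(x_{i,1})\le 1$. Combined with the count that $C'$ misses at most three $Y$-vertices of $C$ (namely $y^+(x_{i,1})$ and, depending on parities, $y^-(x_{i,1})$ and one of $y^+(u_{i+1})$ or $y^+(u_{i-1})$), this gives $d_C(x_{i,1})\le 4$, contradicting Lemma~\ref{sum8}. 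No case analysis over the parities of $u_i,u_{i+1},u_{i+2}$ is needed beyond that simple count. This degree-comparison step is the argument your proposal is missing.
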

\begin{proof}
If $i+1$ has short or long type, we can find a cycle $C'$ such that $X \cap C'$ includes $x$ but leaves out $x_{i,1}$.

If $i+1$ has short type and $y$ is a CON of $x_{i+1,-1}$ and $x_{i+1,1}$, then
\[
	C' := u_{i+1} C^-[u_{i+1}, x_{i+1,-1}] x_{i+1,-1} y x_{i+1,1} C[x_{i+1,1}, u_i] u_i F[u_i, u_{i+1}] u_{i+1}.
\]
Note that $C$ includes at most three vertices of $Y$ which are not in $C'$: $y^+(x_{i,1})$, possibly $y^-(x_{i,1})$ (if $u_i \in X$), and possibly $y^+(u_{i+1})$ (if $u_{i+1} \in X$).

If $i+1$ has long type and $y$ is a CON of $x_{i+1,-1}$ and $x_{i-1,1}$, then
\[
	C' := x_{i+1,-1} C[x_{i+1,-1}, u_{i-1}] u_{i-1}F[u_{i-1}, u_i]u_i C^-[u_i, x_{i-1,1}]x_{i-1,1} y x_{i+1,-1}.
\]
Again, $C$ includes at most three vertices of $Y$ which are not in $C'$: $y^+(x_{i,1})$, possibly $y^-(x_{i,1})$ (if $u_i \in X$), and possibly $y^+(u_{i-1})$ (if $u_{i-1} \in X$).

In both cases, $|C'| \ge |C|$, with strict inequality if $D$ is 2-rich. So we may assume $D$ is not 2-rich. By Lemma~\ref{t3r}, $|T \cap Y| \leq 1$, and hence $d_C(x) \leq 1$. Therefore by the choice of $(C, x, F)$ as a best triple, $d_{C'}(x_{i,1}) \leq 1$. Then in either case $d_{C}(x_{i,1}) \leq 1 + 3$, contradicting Lemma~\ref{sum8}.
%
%
%
\end{proof}

\begin{lem}\label{type-lemma}
For each $i \in [3]$, one of the following  configurations must appear:
\begin{enumerate}
\item[(i)] $i$ has short type, or
\item[(ii)] one of $i-1$ or $i$ has medium type, or
\item[(iii)] $i+1$ has long type.
\end{enumerate}
\end{lem}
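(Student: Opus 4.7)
The plan is to assume for contradiction that for some fixed $i\in[3]$ none of (i), (ii), (iii) holds, and to produce a good set of size $4$, which contradicts Lemma~\ref{good} (since $t=3$ forces $|W|<\max\{4,t\}=4$). The natural candidate is
\[
W := \{x_{i-1,1},\ x_{i,-1},\ x_{i,1},\ x_{i+1,-1}\}.
\]

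First I would check that $|W|=4$. The vertices $x_{i-1,1}$ and $x_{i,-1}$ are the first and last $X$-vertex of $U_{i-1}$, while $x_{i,1}$ and $x_{i+1,-1}$ are the first and last $X$-vertex of $U_i$. These lie in distinct segments, and Lemma~\ref{shortsegment} guarantees $|X_j|\ge 2$ for each $j$, so $x_{i-1,1}\ne x_{i,-1}$ and $x_{i,1}\ne x_{i+1,-1}$.

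Next I would verify the three clauses of the definition of a good set. Clause (i) is immediate since $d_C(x)\le t=3<4=|W|$. For clause (ii), we need every pair in $\{x\}\cup W$ to have no CON. Of the six pairs inside $W$, the pair $\{x_{i-1,1},x_{i,1}\}$ sits in $X^+(T)$ and has no CON by Lemma~\ref{cross0}(i), while $\{x_{i,-1},x_{i+1,-1}\}$ sits in $X^-(T)$ and has no CON by the same lemma applied with the reversed orientation of $C$; the remaining four pairs
\[
\{x_{i,-1},x_{i,1}\},\ \{x_{i-1,1},x_{i,-1}\},\ \{x_{i,1},x_{i+1,-1}\},\ \{x_{i-1,1},x_{i+1,-1}\}
\]
correspond respectively to short type at $i$, medium type at $i-1$, medium type at $i$, and long type at $i+1$ (using the identification $x_{i-1,1}=x_{i+2,1}$ modulo $3$), each of which is forbidden by our hypothesis. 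No CON involving $x$ arises, by Lemma~\ref{cross0}(ii) applied to both orientations of $C$. For clause (iii), I take the partition $W_1=\{x_{i-1,1},x_{i,1}\}\subseteq X^+(T)$ and $W_2=\{x_{i,-1},x_{i+1,-1}\}\subseteq X^-(T)$; Lemma~\ref{nocrossings}, applied to both orientations, says the single pair in each block has no crossings at all, in particular at most one.

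The only conceptual subtlety I expect is repeated use of ``the reversed orientation of $C$'' to convert statements about $X^+(T)$ into analogous statements about $X^-(T)$; this is legitimate because the definition of a best triple and the proofs of Lemmas~\ref{cross0} and~\ref{nocrossings} do not distinguish the clockwise and counterclockwise directions on $C$. Once this is granted, $W$ is a good set of size $4$, contradicting Lemma~\ref{good} and completing the proof.
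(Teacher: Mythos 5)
Your proof is correct and essentially identical to the paper's: both take $W=\{x_{i-1,1},x_{i,-1},x_{i,1},x_{i+1,-1}\}$ with the partition $W_1\subseteq X^+(T)$, $W_2\subseteq X^-(T)$, invoke Lemma~\ref{nocrossings} for crossings and Lemma~\ref{cross0} to kill two of the six CON pairs, and observe that the remaining four pairs are exactly the forbidden short/medium/long configurations, so $W$ would be good, contradicting Lemma~\ref{good}. The one small thing you make explicit that the paper leaves implicit is the use of Lemma~\ref{shortsegment} to confirm $|W|=4$; this is a worthwhile clarification, since Lemma~\ref{good} says nothing about sets of size $3$.
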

\begin{proof} Suppose for some $i\in [3]$ none of (i)--(iii) holds.
Let $W = \{x_{i-1,1}, x_{i,-1}, x_{i,1}, x_{i+1,-1}\}$. By Lemma~\ref{nocrossings} (applied to $C$ and also to the backward orientation of $C$), the vertices inside the sets $W_1=\{x_{i-1, 1}, x_{i,1}\}$ and $W_2=\{x_{i,-1}, x_{i+1,-1}\}$ have no crossings. By Lemma~\ref{cross0}, no vertex in $W$ can have a CON with $x$. Since by Lemma~\ref{good},  $W$ is not a good set, some
 two vertices in $W$  have a CON. 
By Lemma~\ref{cross0} again, $x_{i-1,1}$ and $x_{i,1}$ have no CONs, and $x_{i,-1}$ and $x_{i+1,-1}$ have no CONs. This leaves the configurations in the statement of this lemma.
\end{proof}

The plan of the  remainder of this paper is as follows:
\begin{enumerate}

\item In the next subsection we define abundant indices and show that not all $i\in [3]$ are abundant. This will help to handle medium-type and short-type configurations.

\item In Subsection~\ref{two-long-section} we show  that at most one $i \in [3]$ has long type.

\item In Subsection~\ref{medium-section} we prove that no $i \in [3]$ has medium type.
An important part of this proof is Lemma~\ref{manycon} from Subsection~\ref{many-cn-section}.

\item In Subsection~\ref{one-long-section} we show  that none  of $i \in [3]$ has long type.
So, by Lemma~\ref{type-lemma}, every $i\in [3]$ has short type.

\item Subsection~\ref{short-section} finishes the proof of the main theorem by
handling the case that every $i \in [3]$ has short type.
\end{enumerate}

\subsection{On abundant indices}
\label{many-cn-section}

Call an  $i \in [3]$ {\em abundant} if each of the vertices $x_{i,2}, x_{i,3}, \dots, x_{i+1, -2}$  has a CON with $x_{i,1}$ and a CON with $x_{i+1,-1}$. 

\begin{lem}\label{manycon}
At least one $i \in [3]$ is not abundant.
\end{lem}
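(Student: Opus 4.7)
I would prove this by contradiction: suppose every $i \in [3]$ is abundant, and derive a contradiction by producing a good set of size $4$, which by Lemma~\ref{good} yields $|Y| \ge 4\delta - 9$, contradicting $\delta \ge (m+10)/4$.

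The structural tools I would invoke first are these. By Lemma~\ref{td3} and the remark following it, $\widetilde{T} = T = \{u_1, u_2, u_3\}$, so every CON of any pair of vertices in $X \cap V(C)$ lies in $V(G) - V(C) - V(D)$, and no vertex of $V(C) - T$ has a CON with $x$. By Lemmas~\ref{cross0} and~\ref{nocrossings}, no two vertices of $X^+(T)$ have a CON, and no two of them cross in $C$; the symmetric statements hold for $X^-(T)$ upon reversing the orientation of $C$. By Lemma~\ref{shortsegment}, each $|X_i| \ge 2$.

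My candidate good set is $W_{ij} = \{x_{i,1}, x_{j,1}, x_{i,-1}, x_{j,-1}\}$ for some distinct $i, j \in [3]$, partitioned as $W_1 = \{x_{i,1}, x_{j,1}\}$ and $W_2 = \{x_{i,-1}, x_{j,-1}\}$. Conditions (i) and (iii) of the goodness definition are immediate from the preliminaries above: $|W|=4>t$ and no pair within $W_1$ (or within $W_2$) crosses. Condition (ii) fails only if one of the four pairs $\{x_{i,1}, x_{i,-1}\}$, $\{x_{j,1}, x_{j,-1}\}$, $\{x_{i,1}, x_{j,-1}\}$, $\{x_{j,1}, x_{i,-1}\}$ has a CON, i.e., short-type $i$, short-type $j$, or a medium/long-type CON joining segments $i$ and $j$. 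Abundance enters here to rule out such CONs by cycle-splicing: if $i$ is abundant with $|X_i| \ge 3$ and, say, $x_{i,1}$ and $x_{j,-1}$ share a CON $y^\ast$, I would combine $y^\ast$ with the abundance CON of $x_{i,2}$ with $x_{i,1}$ (or with $x_{i+1,-1}$) and with a $D$-path $P_D[u_i,u_j]$ to construct a cycle $C'$ with $|C'|>|C|$, contradicting the maximality of $|C|$ in our best triple. A symmetric argument handles short-type CONs involving an abundant index. When some $|X_i|=2$ (so abundance of $i$ is vacuous), Lemma~\ref{shortsegment2} prohibits short- and long-type at $i+1$, and combining this with Lemma~\ref{type-lemma} pins down the remaining types to a pair $\{j,k\}$ for which $W_{jk}$ is good.

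The main obstacle I expect is verifying that each cycle-splicing construction strictly increases $|C|$ (rather than merely ties), so as to truly contradict the maximality of the best triple. This requires careful bookkeeping of how many vertices are added from $P_D$ and from the inserted CONs versus how many are removed from $C$, with separate cases depending on whether each of $u_i, u_j$ belongs to $X$ or $Y$. In the tied-length subcases I would fall back on Lemma~\ref{cros001}, which gives a contradiction whenever a cycle of the same length as $C$ covers $(X\cap V(C))-x'+x$ together with both $Y$-neighbors of the excluded vertex $x'$. Once these obstructions are systematically eliminated under the assumption that all three indices are abundant, the pigeonhole over the three index-pairs forces some $W_{ij}$ to be good, completing the contradiction.
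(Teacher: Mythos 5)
Your approach has a fatal flaw at the crucial step where you claim that abundance rules out short-type (and medium-type) CONs by cycle-splicing. In fact, exactly the opposite is true: these configurations are \emph{compatible} with abundance, and abundance is even \emph{implied} by them in the relevant cases. Lemma~\ref{lots} shows that if $i$ has medium type then $i$ is abundant, and Lemma~\ref{last-lemma} shows that when every index has short type (and no medium or long), every index is abundant. So if all three indices have short type and all three are abundant, there is no geometric obstruction of the cycle-splicing kind: the CON witnessing short type at $u_1$ sits near the junction $u_1$, and routing through it gains the two CON-vertices but necessarily drops $u_1$ (and, if $u_1\in X$, its two $Y$-neighbors), so every splice of this shape produces a cycle of the same length or shorter. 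The final contradiction in the paper does \emph{not} come from the impossibility of ``abundant plus short-type''; it comes from combining Lemma~\ref{manycon} (the statement you are trying to prove) with Lemmas~\ref{nomed},~\ref{nolong}, and~\ref{last-lemma}, and Lemma~\ref{manycon} itself must therefore be established by a different means.

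The paper's actual proof of Lemma~\ref{manycon} is a global counting argument, not a good-set argument. Assuming all three indices abundant, it first shows that $N_C(x_{i,1}) \subseteq Y_i \cup W$ for a fixed $6$-element set $W$ of $Y$-vertices near $u_1,u_2,u_3$: if $x_{i,1}$ had a neighbor $y_{j,k}$ deep inside another segment $U_j$, then the abundance of $j$ supplies a CON of $x^+(y_{j,k})$ with $x_{j,1}$, and splicing that CON together with a $D$-path $P_D[u_i,u_j]$ yields a cycle strictly longer than $C$. This step is the only place cycle-splicing enters. The bound is then sharpened (via two auxiliary claims handling adjacency to the vertices of $W$) to $\sum_{i\in[3]} |N_C(x_{i,1})| \le \ell + 6$, and combined with the fact that no two of $\{x, x_{1,1}, x_{2,1}, x_{3,1}\}$ share a CON (Lemma~\ref{cross0}) to conclude $|Y| \ge \ell + 4\delta - 3 - (\ell - 6) = 4\delta - 9$, a contradiction. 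Your plan of exhibiting a four-element good subset of $X^+(T)\cup X^-(T)$ is exactly the line of attack that the paper classifies into short/medium/long obstructions and then spends Sections~\ref{two-long-section}--\ref{short-section} eliminating; trying to do it inside the proof of Lemma~\ref{manycon} short-circuits that case analysis and cannot succeed without the counting input.
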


\begin{proof} Suppose all $i \in [3]$ are abundant.
For $i\in [3]$, let 
 $w_i=y^+(x^-(u_i))$. In other words,
$w_i= u_i$ if $u_i \in Y$, and $w_i = y_{i,-1}$ if $u_i \in X$. Define $W= \{w_1,y^-(w_1), w_2, y^-(w_2),w_3,y^-(w_3)\}$.
 We claim that for all $i \in [3]$,
\begin{equation}\label{fewneighbors}
N_C(x_{i,1}) \subseteq Y_i \cup W.
\end{equation}

Suppose that $x_{i,1}$ has a neighbor $y_{j,k}$ where $j \neq i$ and $y_{j,k} \in Y_j-\{ w_{j+1},y^-(w_{j+1})\}$. 
By Lemma~\ref{T+},  if $u_{j} \in X$, then $y_{j,k} \neq y_{j,1}$. So $y_{j,k}$ lies strictly between $x_{j,1}$ and $x_{j+1, -2}$. 
Since $j$ is abundant,   $x^+(y_{j,k})$ and $x_{j,1}$ have a CON, say  $y$.
Then the cycle
\[
	C' := x_{i,1} C[x_{i,1}, u_j] u_jF[u_j, u_i]u_i C^-[u_i, x^+(y_{j,k})]x^+(y_{j,k})  y x_{j,1}C[x_{j,1}, y_{j,k}] y_{j,k} x_{i_1}
\]
(see Figure~\ref{fig:manyCON}) is longer than $C$,
a contradiction. This proves~\eqref{fewneighbors}.

\begin{figure}
\centering
\includegraphics[scale=.5]{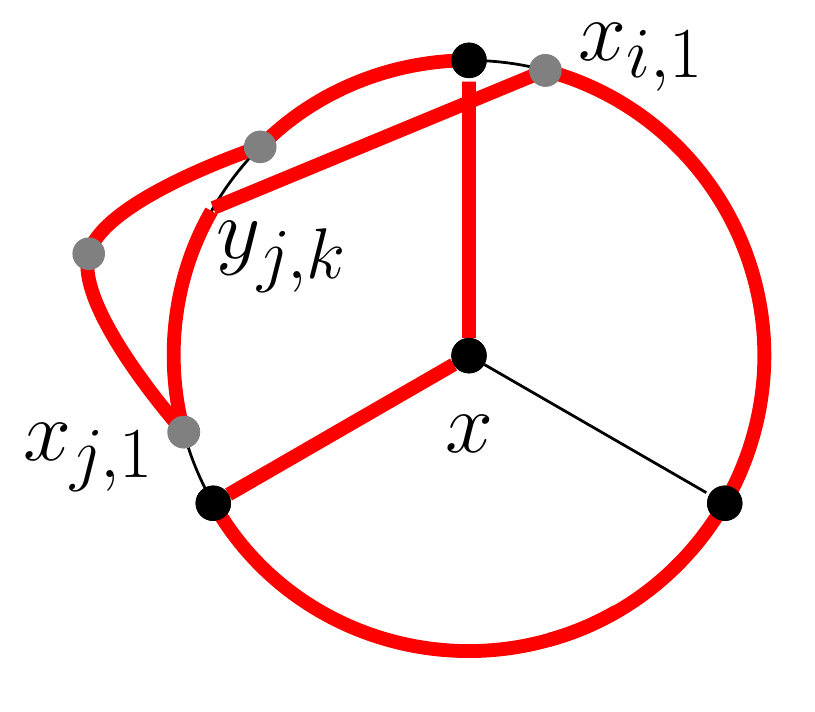}
\caption{A longer cycle when $x_{i,1}$ has a neighbor $y_{j,k}$.}

\label{fig:manyCON}
\end{figure}
Next we show that 
\begin{equation}\label{ij2}
\mbox{\em if $|X_j|=2$ and $x_{i,1}y^+(x_{j,1})\in E(G)$, then
 $N(x_{j,1})\cap  W
=\{y^+(x_{j,1})\}$.
}\end{equation}
Indeed, let $P_1$ be a longest $u_j,u_i$-path all internal vertices of which are in $D=D(C,x)$. Consider the cycle
\[
	C'' := x_{i,1} C[x_{i,1}, u_j]u_j P_1u_i C^-[u_i, y^+(x_{j,1})]y^+(x_{j,1})
 x_{i_1}.
\]
 If $D$ is $2$-rich, then $C''$ is longer than $C$, a contradiction. Thus $D$ is not $2$-rich, and hence by
 Lemma~\ref{t3r}, $|Y\cap T|\leq 1$. In this case, $|C''|\geq |C|$. Let $F''$ be a best $x_{j,1},C''$-fan. Since the triple 
 $(C'',x_{j,1},F'')$ is
 not better than $(C,x,F)$, $|C''|=|C|$ and $|N(x_{j,1})\cap V(C'')|\leq 1$. Since $y^+(x_{j,1})\in N(x_{j,1})$ by definition,
 and $W\subseteq V(C'')$,~\eqref{ij2} follows.

Now we show that similarly to~\eqref{ij2},
\begin{equation}\label{ij3}
\mbox{\em if $|X_j|\geq 3$ and $x_{i,1}y^-(w_{j+1})\in E(G)$, then
 $|N(x_{j,1})\cap  W|\leq 1$.
}\end{equation}
Indeed, let $P_1$ be a longest $u_j,u_i$-path all internal vertices of which are in $D=D(C,x)$. Since 
$|X_j|\geq 3$ and $j$ is abundant,   $x_{j+1,-1}$ and $x_{j,2}$ have a CON, say  $y$.
Consider the cycle
\[
	C''' := x_{i,1} C[x_{i,1}, u_j]u_j P_1u_i C^-[u_i, x_{j+1,-1}]x_{j+1,-1}yx_{j,2}C[x_{j,2}, y^-(w_{j+1})]y^-(w_{j+1})
 x_{i_1}.
\]
 If $D$ is $2$-rich, then $C'''$ is longer than $C$, a contradiction. Thus $D$ is not $2$-rich, and  by
 Lemma~\ref{t3r}, $|Y\cap T|\leq 1$. In this case, $|C''|\geq |C|$. Let $F'''$ be a best $x_{j,1},C'''$-fan. Since the triple 
 $(C''',x_{j,1},F''')$ is
 not better than $(C,x,F)$, $|C'''|=|C|$ and $|N(x_{j,1})\cap V(C''')|\leq 1$. Since
  $W\subseteq V(C''')$,~\eqref{ij3} follows.

If there are no distinct $i,j\in [3]$ such that $x_{i,1}y^-(w_{j+1})\in E(G)$, then by~\eqref{fewneighbors},
$\sum_{i\in [3]} |N_C(x_{i,1})|\leq \sum_{i\in [3]} (|Y_i|+2)$, and hence
\begin{equation}\label{l219}
\sum_{i\in [3]} N_C(x_{i,1})\leq \ell+6.
\end{equation}
If there is only one $j\in [3]$ such that $y^-(w_{j+1})$ is adjacent to $x_{j-1,1}$ or  to $x_{j+1,1}$
(say, $x_{i,1}y^-(w_{j+1})\in E(G)$), then  by~\eqref{fewneighbors},
$|N_C(x_{i,1})|\leq |Y_i|+3$ for $i\neq j$, but by~\eqref{ij2} and~\eqref{ij3}, $|N_C(x_{j,1})|\leq |Y_j|$. 
So again~\eqref{l219} holds.

Finally, if there are distinct $j_1,j_2\in [3]$ such that $x_{i_s,1}y^-(w_{j_s+1})\in E(G)$ for  $s\in [2]$ and some $i_s$,
then by~\eqref{ij2} and~\eqref{ij3}, $|N_C(x_{j_s,1})|\leq |Y_{j_s}|$, and  by~\eqref{fewneighbors},
$|N_C(x_{i,1})|\leq |Y_i|+4$ for $i\in [3]-\{j_1,j_2\}$. Thus~\eqref{l219} holds in all cases.

By Lemma~\ref{cross0}, no two vertices in the set $A = \{x, x_{1,1}, x_{2,1}, x_{3,1}\}$ have a CON.  Therefore,
by~\eqref{l219},
$|Y| \geq \ell + 4 \delta - 3 - (\ell - 6) = 4\delta - 9,$
a contradiction.
\end{proof}

\subsection{Eliminating multiple long-type configurations}
\label{two-long-section}

\begin{lem}\label{onelong}
At most one $i\in[3]$ has long type.
\end{lem}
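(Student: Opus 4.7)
The plan is to argue by contradiction. Suppose $i=1$ and $i=2$ both have long type; let $y_1$ be a CON of $x_{1,-1}$ and $x_{2,1}$, and let $y_2$ be a CON of $x_{2,-1}$ and $x_{3,1}$. First I record two standing observations. Since $x_{2,1},x_{3,1}\in X^+(\widetilde{T})$, Lemma~\ref{cross0}(i) implies they share no CON; hence $y_1\neq y_2$. Also $y_1,y_2\notin V(D)$: a neighbor of $x_{2,1}$ or $x_{3,1}$ in $D$ would force these vertices into $\widetilde{T}=\{u_1,u_2,u_3\}$, whereas $x_{2,1}\in U_2$ and $x_{3,1}\in U_3$.

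The central construction is the cycle
\[
C' \;=\; u_1\,P_D[u_1,u_3]\,u_3\,C^-[u_3,x_{2,1}]\,y_1\,x_{1,-1}\,C^-[x_{1,-1},x_{3,1}]\,y_2\,x_{2,-1}\,C^-[x_{2,-1},u_1],
\]
which combines both chord-vertices $y_1,y_2$ with a longest $u_1,u_3$-path through $D$. Writing $c_i=\mathbf{1}[u_i\in X]$, careful bookkeeping shows that $V(C')$ misses $u_2$ and at most $c_1+2c_2+c_3$ further $Y$-vertices of $V(C)$, while gaining $y_1,y_2$ and the internal vertices of $P_D[u_1,u_3]$; hence
\[
|C'|-|C| \;=\; |P_D[u_1,u_3]|-3-c_1-2c_2-c_3.
\]

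The argument splits on $|T\cap X|$. When $|T\cap X|\leq 1$, Lemma~\ref{t3r} gives that $D$ is 2-rich, so $|P_D[u_1,u_3]|\geq 5$ (with strict inequality if either of $u_1,u_3$ lies in $X$); since $c_1+2c_2+c_3\leq 2$, we get $|C'|>|C|$, contradicting the maximality of $|C|$. The sole borderline case $c_1=c_3=0$, $c_2=1$, $|P_D[u_1,u_3]|=5$ yields $|C'|=|C|$; here $u_2\in X\setminus V(C')$, and I will verify via Definition~\ref{better} that $(C',u_2,F')$ is a better triple, using the three internally disjoint paths from $u_2$ to $V(C')$ (through $y^-(u_2)$ to $x_{2,-1}$, through $y^+(u_2)$ to $x_{2,1}$, and through $y_2'\in F[x,u_2]$ to $x\in V(C')$) together with a gain in one of the criteria (c)--(e).

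When $|T\cap X|\geq 2$, I instead apply Lemma~\ref{path-to-crossings} to the $x_{1,-1},x_{2,1}$-path
\[
P \;=\; x_{1,-1}\,C^-[x_{1,-1},x_{3,1}]\,y_2\,x_{2,-1}\,C^-[x_{2,-1},u_1]\,P_D[u_1,u_3]\,C^-[u_3,x_{2,1}],
\]
whose only $D$-incursion is the consecutive segment $P_D[u_1,u_3]$. If $u_2\in Y$, then $X\cap V(C)\cup\{x\}\subseteq V(P)$, and Lemma~\ref{path-to-crossings}(i) forbids a CON of $x_{1,-1}$ and $x_{2,1}$ outside $V(P)$; but $y_1\notin V(C)\cup V(D)\cup\{y_2\}$ and hence $y_1\notin V(P)$, so $y_1$ is such a CON, a contradiction. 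The main obstacle is the sub-case $u_2\in X$: here $P$ must be rerouted to cover $u_2$ via the detour $\cdots\,y^-(u_2)\,u_2\,y^+(u_2)\,\cdots$ while preserving a single $D$-excursion and keeping $x_{1,-1},x_{2,1}$ as endpoints; the flexibility in choosing among $P_D[u_i,u_j]$-paths and in inserting the detour adjacent to the $U_1$-segment lets us set this up so that Lemma~\ref{path-to-crossings}(i) again yields a contradiction via $y_1$.
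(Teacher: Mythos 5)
Your construction of $C'$ from the two CON-chords $y_1, y_2$ and a $D$-path, arranged so that the cycle skips exactly one of $u_1,u_2,u_3$, is exactly the engine of the paper's argument (the paper relabels so that $3$ and $1$ have long type and the cycle misses $u_1$). However, the proposal has a genuine gap precisely where the paper does the heavy lifting, and there are two smaller issues that propagate into it.

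First, the bookkeeping constant is off: with $|P_D[u_1,u_3]|$ denoting the number of vertices including endpoints, you remove $1 + c_1 + 2c_2 + c_3$ vertices of $C$ and add $2 + (|P_D[u_1,u_3]|-2)$ new ones, so $|C'|-|C| = |P_D[u_1,u_3]| - 1 - c_1 - 2c_2 - c_3$, not $-3$. With the correct constant, the ``borderline'' configuration you single out ($c_1=c_3=0$, $c_2=1$, $|P_D|=5$) actually gives $|C'|-|C|=2>0$, so that discussion is moot. What the correct formula \emph{does} show is that $|C'| > |C|$ whenever $u_2\in Y$, and $|C'|>|C|$ whenever $D$ is $2$-rich; the sole problematic configuration is $u_2\in X$ with $P_D[u_1,u_3]$ having only one internal $X$-vertex.

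Second, you use $P_D[u_1,u_3]$ both inside $C'$ and inside the path $P$ you feed to Lemma~\ref{path-to-crossings}, but $P_D[u_1,u_3]$ need not pass through $x$. Lemma~\ref{path-to-crossings} explicitly requires $(X\cap V(C))\cup\{x\}\subseteq V(P)$, and your later fan-argument at $u_2$ also presumes $x\in V(C')$. Both uses should be replaced by $F[u_1,u_3]$ (a single $D$-excursion containing $x$), which is what the paper does via $F[u_2,u_3]$.

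The real gap is the sub-case $u_2\in X$ (equivalently, the paper's $u_1\in X$). When $u_2\in X$, no $x_{1,-1},x_{2,1}$-path with a single $D$-excursion can cover $u_2$ without either entering $D$ from $u_2$ (breaking the single-excursion hypothesis) or traversing $C$ through $u_2$ (which then drops $u_3$ or part of $X_2$, as one can check by trying to route the path). So Lemma~\ref{path-to-crossings} is simply not applicable, and ``the flexibility in choosing among $P_D[u_i,u_j]$-paths'' does not save it. The paper resolves this case with a substantial argument: it builds $C'$ with $F[u_2,u_3]$, lets $yu_1$ be the last edge of the $x,u_1$-leg of $F$, uses $3$-connectivity to find a path $P$ from $y$ to $V(C)\cap V(C')$ avoiding $x$ and $u_1$, reroutes through $P$ to get $C''$, carefully analyzes when $|C''|=|C|$, builds a $u_1,C''$-fan $F''$ to invoke criterion (c) of Definition~\ref{better}, deduces $u_3\in Y$, and then applies $3$-connectivity a second time to a vertex $y'$ on the $x,u_2$-leg, finally obtaining a longer cycle in each of three sub-cases. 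Your proposal replaces all of this with a single sentence. That step is the crux of the lemma and needs to be written out.
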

\begin{proof} Suppose the lemma does not hold. By symmetry,
 we may assume that  $x_{3,-1}$ and $x_{1,1}$ have a CON $a$, and $x_{1,-1}$ and $x_{2,1}$ have a CON $b$. Since $x_{1,1}$ and $x_{2,1}$ cannot have a CON,  $a\neq b$. Consider the cycle \[
 	C' := u_3C[u_3,x_{1,-1}]x_{1,-1}bx_{2,1}C[x_{2,1},x_{3,-1}]x_{3,-1}ax_{1,1}C[x_{1,1},u_2]u_2F[u_2,u_3]u_3
\]
formed as shown in Figure~\ref{fig:2long}.

\begin{figure}[h!]
    \centering
    \includegraphics[scale=.5]{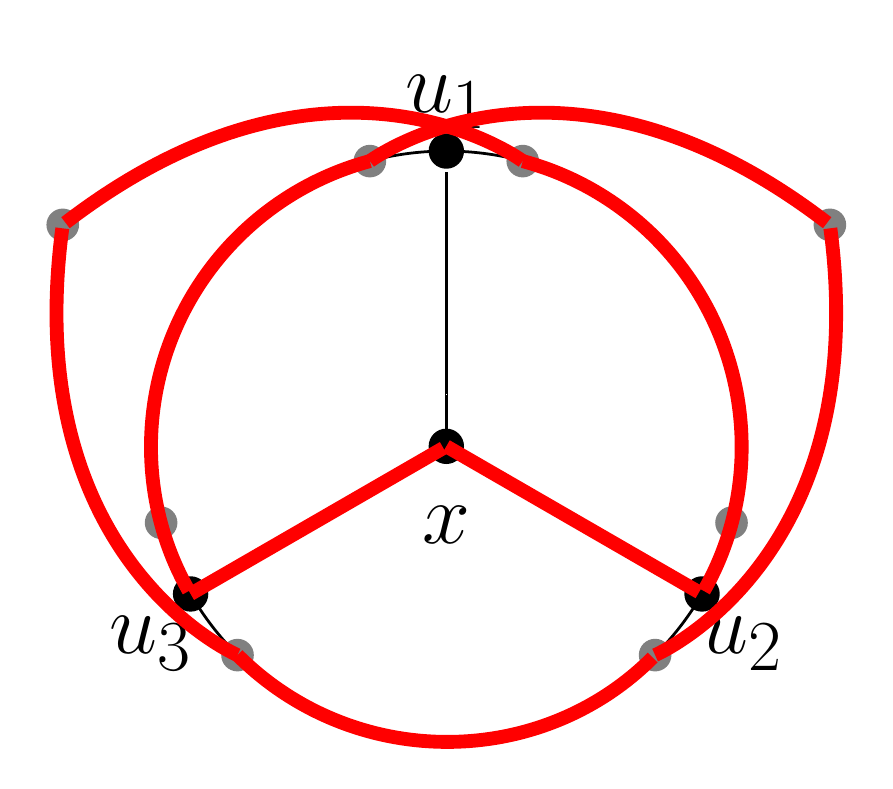}
    \caption{The cycle $C'$ formed by two long-type configurations.}
    \label{fig:2long}
\end{figure}

Cycle $C'$ includes $x$ and all vertices of $X \cap V(C)$, except possibly $u_1$, hence $|C'| \geq |C|$. If $u_1 \in Y$, $C'$ is longer than $C$, which is a contradiction. 
Moreover, if $F[u_2,u_3]$ contains at least 2 internal $X$ vertices, then $|C'| > |C|$. 

If $u_1 \in X$, let $yu_1$ be the last edge of the $x,u_1$-path of $F$. As $G$ is 3-connected, there is a path $P$ from $y$ to $V(C) \cap V(C')$ not containing $x$ or $u_1$. Since by definition, deleting $\{u_1, u_2, u_3\}$ disconnects $x$, and therefore $y$, from $C$, path $P$ must go from $y$ to some vertex $u'$ on either the $x, u_2$-path or the $x, u_3$-path in $F$. Without loss of generality, assume $u'$ is on the $x, u_2$-path.

Consider the cycle 
\[
	C'' := u_2C^-[u_2,x_{1,1}]x_{1,1}ax_{3,-1}C^-[x_{3,-1},x_{2,1}]x_{2,1}bx_{1,-1}C^-[x_{1,-1},u_3]u_3F[u_3,y]yPu'F[u',u_2]u_2
\]
shown in Figure~\ref{fig:2long-uy}, obtained from $C'$ by replacing the segment $C'[u', x]$ contained in $F$ by the union of $P$ and $F[x,y]$. This is longer than $C'$ (and therefore longer than $C$) except in one case: when each of $P$ and the  $F[x,y]$ is a single edge, and $u' =u_2$ (which must then be  in $X$).  In this  case,
\[
	C'' := u_2C^-[u_2,x_{1,1}]x_{1,1}ax_{3,-1}C^-[x_{3,-1},x_{2,1}]x_{2,1}bx_{1,-1}C^-[x_{1,-1},u_3]u_3F[u_3,x]xyu_2.
\]

\begin{figure}[h!]
    \centering
    \includegraphics[scale=.5]{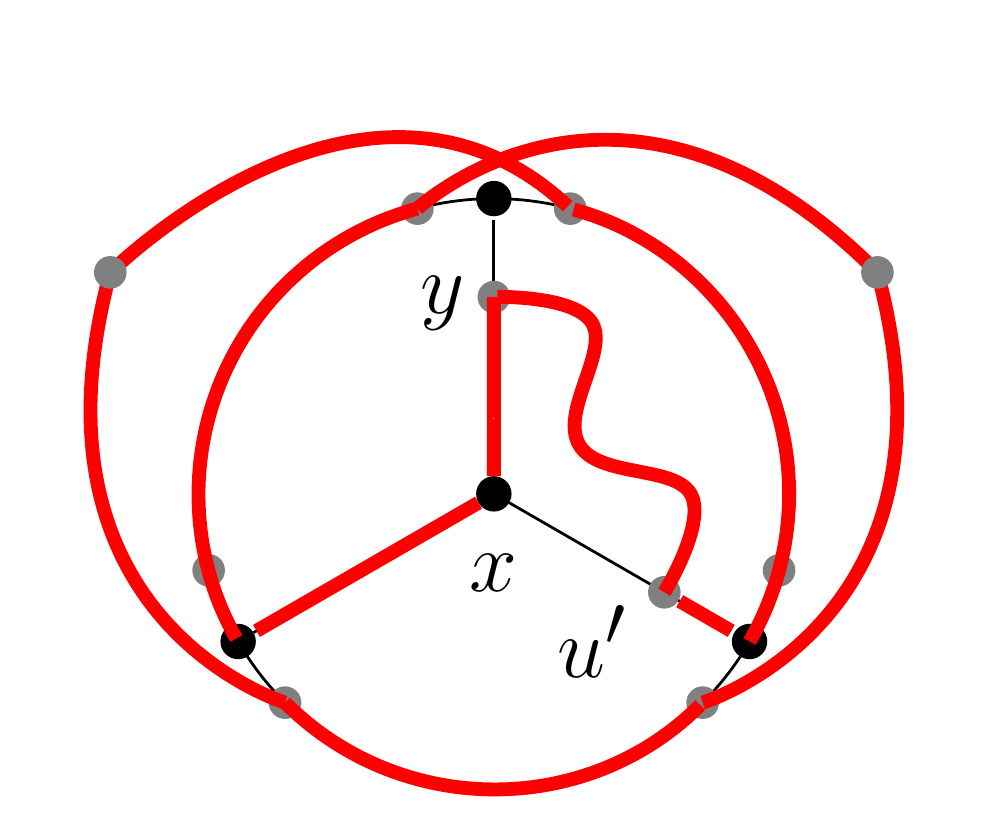}
    \caption{The cycle $C''$ formed using the path $P$.}
    \label{fig:2long-uy}
\end{figure}

Let $F''$ be the $u_1, C''$-fan formed by the paths $C[x_{1,-1}, u_1]$ and $C[u_1, x_{1,1}]$, and the edge $u_1y$. The triple $(C'', u_1, F'')$ has $|C''| = |C|$ and $t(u_1, C'') = t(x, C)$, so by our choice of the triple $(C, x, F)$, we must have 
$|V(F'') \cap V(C'') \cap Y| \le |V(F) \cap V(C) \cap Y|$. Since $V(F'') \cap V(C'') \cap Y = \{y\}$, $|V(F) \cap V(C) \cap Y| \ge 1$, which can only happen if $u_3 \in Y$. Therefore the $x,u_3$-path in $F$ consists of a single edge $xu_3$, and the only vertices of $V(C'')-V(C)$ are $x$, $y$, $a$, and $b$.

Let $y'$ be the vertex of $F$ between $x$ and $u_2$ on the $x,u_2$-path of $F$. Since $G$ is 3-connected, there is a path $P'$ from $y'$ to $V(C) \cup V(C'')$ not containing $x$ or $u_2$. However, we know that deleting $\{u_1, u_2, u_3\}$ disconnects $x$, and therefore $y'$, from $C$. Therefore either $P'$ goes from $y'$ to a vertex in $V(C'') - V(C)$, which can only be $y$, or else $P'$ goes from $y'$ to one of the vertices $u_1, u_3$.

\begin{figure}[h!] 
    \centering
    \includegraphics[scale=.5]{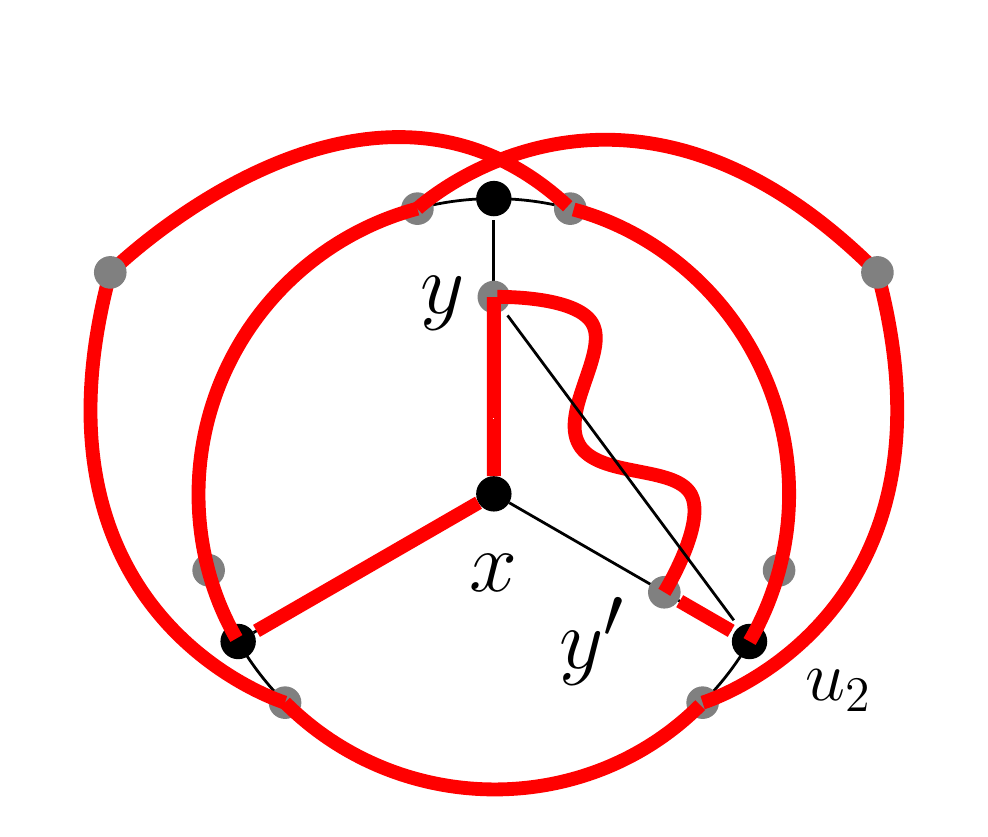}
    \includegraphics[scale=.5]{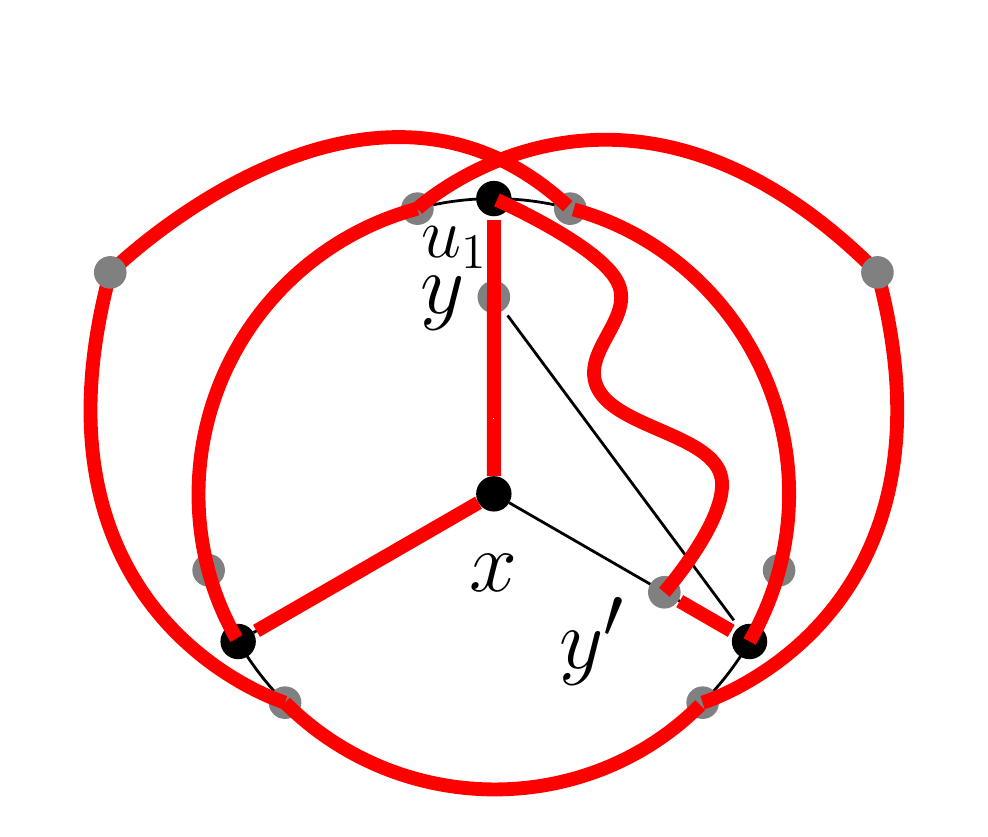}
    \includegraphics[scale=.5]{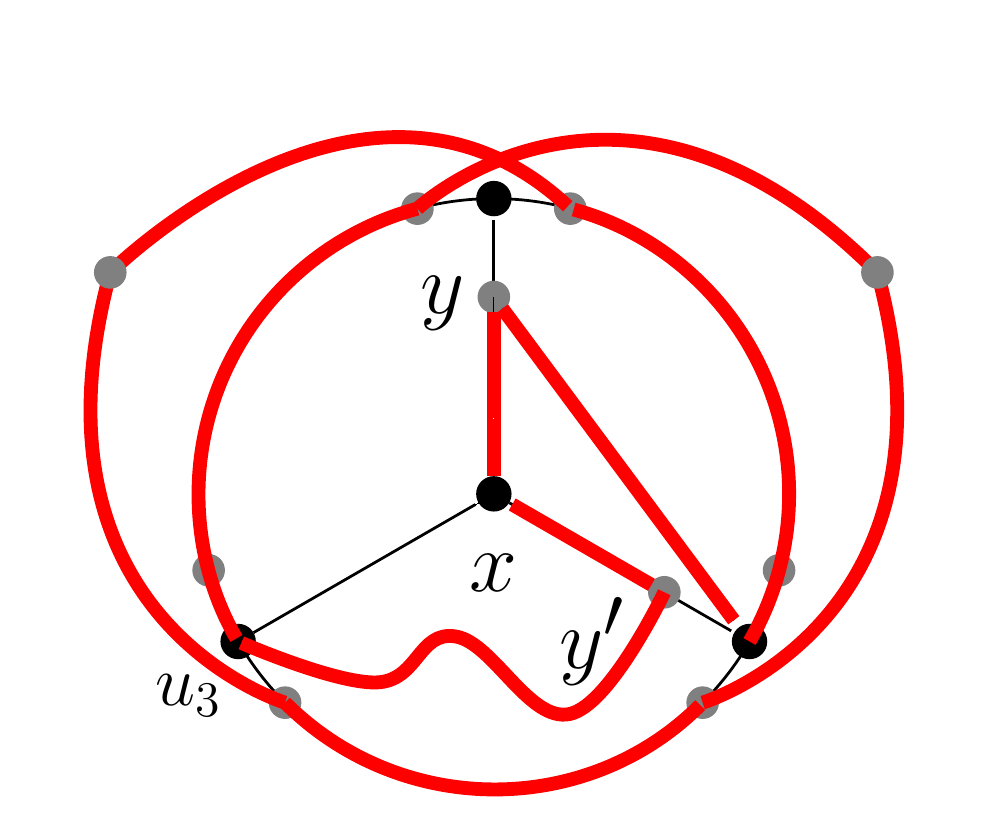}
    \caption{Three ways to extend $C''$ to a longer cycle}
    \label{fig:yprime}
\end{figure}

In each of these cases, we obtain a longer cycle. If $P'$ goes from $y'$ to $y$, we can extend $C''$ by replacing  edge $u_2y$ with $u_2y'$ followed by $P'$ to get the cycle 
\[
	u_2C^-[u_2,x_{1,1}]x_{1,1}ax_{3,-1}C^-[x_{3,-1},x_{2,1}]x_{2,1}bx_{1,-1}C^-[x_{1,-1},u_3]u_3F[u_3,x]xyP'y'u_2,
\]
as shown on the left in Figure~\ref{fig:yprime}. If $P'$ goes from $y'$ to $u_1$, we can extend $C''$ by replacing  edge $u_2y$ with $u_2y'$, $P'$, and $u_1y$ to get the cycle 
\[
	u_2C^-[u_2,x_{1,1}]x_{1,1}ax_{3,-1}C^-[x_{3,-1},x_{2,1}]x_{2,1}bx_{1,-1}C^-[x_{1,-1},u_3]u_3F[u_3,u_1]u_1P'y'u_2,
\]
as shown in the middle of Figure~\ref{fig:yprime}. Finally, if $P'$ goes from $y'$ to $u_3$, we can extend $C''$ by replacing edge $xu_3$ with $xy'$ followed by $P'$ to get the cycle
\[
	u_2C^-[u_2,x_{1,1}]x_{1,1}ax_{3,-1}C^-[x_{3,-1},x_{2,1}]x_{2,1}bx_{1,-1}C^-[x_{1,-1},u_3]u_3P'y'F[y',y]yu_2,
\]
as shown on the right in Figure~\ref{fig:yprime}.
\end{proof}

Thus, no more than one $i\in[3]$ can have long type.

\subsection{Eliminating medium-type configurations}
\label{medium-section}

In this subsection, our goal is to show that no $i \in [3]$ has medium type.

Recall that $i \in [3]$ is {\em abundant} if each of the vertices $x_{i,2}, x_{i,3}, \dots, x_{i+1, -2}$  has a CON with $x_{i,1}$ and a CON with $x_{i+1,-1}$. 

\begin{lem}\label{lots}
If $i \in [3]$ has medium type, then $i$ is abundant.
\end{lem}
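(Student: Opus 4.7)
The plan is to prove Lemma~\ref{lots} by contradiction. Suppose $i = 1$ has medium type with $y^* \in N(x_{1,1}) \cap N(x_{2,-1}) \setminus V(C)$, but some $x' := x_{1,k}$ with $2 \le k \le |X_1|-1$ has no CON with $x_{1,1}$; the symmetric failure (no CON with $x_{2,-1}$) follows by reversing the orientation of $C$ (under which the medium-type hypothesis is preserved as a symmetric condition on the unordered pair $\{x_{1,1}, x_{2,-1}\}$). The aim is to exhibit a triple $(C', x', F')$ that beats $(C, x, F)$ under Definition~\ref{better}, contradicting the extremality of $(C, x, F)$.

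The cycle $C'$ is produced from $C$ by two simultaneous modifications: (i) use the CON $y^*$ to shortcut the $x_{1,1} \to x_{2,-1}$ arc inside $U_1$ (which contains $x'$) to the length-two path $x_{1,1}\,y^*\,x_{2,-1}$, and (ii) splice the fan $F$ into the cycle at a suitably chosen $u_j$--$u_{j+1}$ segment, so that $x$ is incorporated into $C'$ and the overall length does not decrease. The choice of splice location depends on whether $D = D(C,x)$ is 2-rich and on the sizes $|X_j|$. When $D$ is 2-rich, a $u_j, u_{j+1}$-path in $D$ with at least two internal $X$-vertices is used in place of $F[u_j, u_{j+1}]$ to ensure $|C'| > |C|$. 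When $D$ is not 2-rich, Lemma~\ref{t3r} forces $|T \cap X| \ge 2$ and hence $d_C(x) \le 1$, so the splice via $F$ itself can be arranged to give $|C'| = |C|$; the extra slack in criteria (c)--(e) of Definition~\ref{better} will then supply the contradiction.

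Since $x' \notin V(C')$ by construction, the 3-connectivity of $G$ yields an $x', C'$-fan $F'$ with at least three paths. We then compare $(C', x', F')$ with $(C, x, F)$. In the 2-rich case, $|C'| > |C|$ supplies criterion (a). In the non-2-rich case, $|C'| = |C|$, and the new fan $F'$ can be made to include enough $Y$-vertices of $C'$ (in particular, by routing the short initial edges of $F'$ through the $Y$-neighbors of $x'$ available in the component of $G - V(C')$ containing $x'$) to strictly exceed $|V(F) \cap V(C) \cap Y| \le d_C(x) \le 1$, triggering criterion (c).

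The main obstacle is the length bookkeeping for $C'$ across the various subcases (depending on $|X_j|$, on whether each $u_j \in X$ or $Y$, and on the 2-rich status of $D$), together with ensuring that $C'$ is a simple cycle, that $x' \notin V(C')$, and that the fan $F'$ strictly improves over $F$ in Definition~\ref{better}. A secondary difficulty is using the standing hypothesis that $x_{1,1}$ and $x'$ share no CON to control which vertices $F'$ can reach from $x'$: this absence of a CON prevents certain unwanted cycles longer than $C$ from appearing (which would directly contradict extremality), and simultaneously guarantees that at least two independent $x', C'$-paths of $F'$ terminate at $Y$-vertices of $C'$.
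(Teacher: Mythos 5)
Your approach is genuinely different from the paper's, but it has a fatal gap in the cycle-length bookkeeping. Replacing the entire $x_{1,1}, x_{2,-1}$ arc through $U_1$ by the two-edge path $x_{1,1}\,y^*\,x_{2,-1}$ deletes \emph{all} of $X_1 - \{x_{1,1}, x_{2,-1}\}$ (and the interior $Y$-vertices of that arc) from the new cycle, not just $x'$. The segment $U_1$ can be arbitrarily long, while the fan $F$ is chosen \emph{minimal} in $|V(F)|$ by criterion (d) of Definition~\ref{better}, so splicing $F$ (or a $D$-path) in for one $C[u_j,u_{j+1}]$ segment cannot recover the loss. Even in the $2$-rich case, you only gain two extra $X$-vertices from $D$ while potentially losing many. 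Thus $|C'| \geq |C|$ is false in general, and criterion (a) of Definition~\ref{better} then says $(C,x,F)$ remains the better triple — no contradiction.

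Two more ingredients present in the paper's argument are missing here. First, the paper fixes $a$ to be the \emph{minimal} index such that $x_{1,a}$ shares no CON with $x_{2,-1}$; the minimality guarantees that $x_{1,a-1}$ \emph{does} have a CON $y'$ with $x_{2,-1}$, and that CON is the hinge of every cycle and path constructed. You never exploit an extremal choice of the failing vertex, so you have no such hinge available. Second, the paper's contradiction does not come from producing a better triple at all: it comes from exhibiting the good set $\{x_{1,-1}, x_{2,-1}, x_{3,-1}, x_{1,a}\}$ (pairwise CON-free, with a crossing-controlled partition), which contradicts Lemma~\ref{good} via the global degree-sum count $|Y| \geq 4\delta - 9$. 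Your plan relies entirely on local cycle surgery, and for the reasons above that surgery does not produce a cycle of length $\geq |C|$; the good-set route in the paper bypasses that obstacle entirely.
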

\begin{proof}
Without loss of generality, we will assume that $i=1$ has medium type. We will show that for all $j\geq 1$, $x_{2,-1}$ and $x_{2,-j}$ share a CON. This is the same as showing $x_{2,-1}$ and $x_{1,a}$ share a CON for all $a \ge 1$ such that $x_{1,a} \ne x_{2,-1}$. Showing that $x_{1,1}$ and $x_{1,j}$ have a CON is symmetric.

Suppose there is an $a$ such that $x_{1,a}$ shares no CON with $x_{2,-1}$, but $x_{1,a'}$ does for all $1\leq a'<a$. Our goal is to show $\{x_{1,-1},x_{2,-1},x_{3,-1},x_{1,a}\}$ is a good set. Let $y'$ be the common neighbor of $x_{2,-1}$ and $x_{1,a-1}$. Note that $x_{1,-1},x_{2,-1},x_{3,-1}$ can have no CON by Lemma~\ref{cross0}. Additionally, by Lemma~\ref{nocrossings}, $x_{2,-1},x_{3,-1}$ have no crossings. 

By our choice of $a$, vertices $x_{1,a}$ and $x_{2,-1}$ have no CON. By Lemma~\ref{path-to-crossings} via the path
\[
	P := x_{1,a}C[x_{1,a},x_{2,-1}]x_{2,-1}y'x_{1,a-1}C^-[x_{1,a-1},u_1]u_1F[u_1,u_2]u_2C[u_2,x_{1,-1}]x_{1,-1}
\]
shown in Figure~\ref{fig:5_1}, $x_{1,a}$ and $x_{1,-1}$ have no crossings and no CON outside $P$. However, $y'$ is the only possible CON of $x_{1,a}$ and $x_{1,-1}$ on $P$, and if $x_{1,-1} y' \in E(G)$, $x_{1,-1}$ and $x_{2,-1}$ would have a CON, which also is impossible.
\begin{figure}[h!]
    \centering
    \includegraphics[scale=.5]{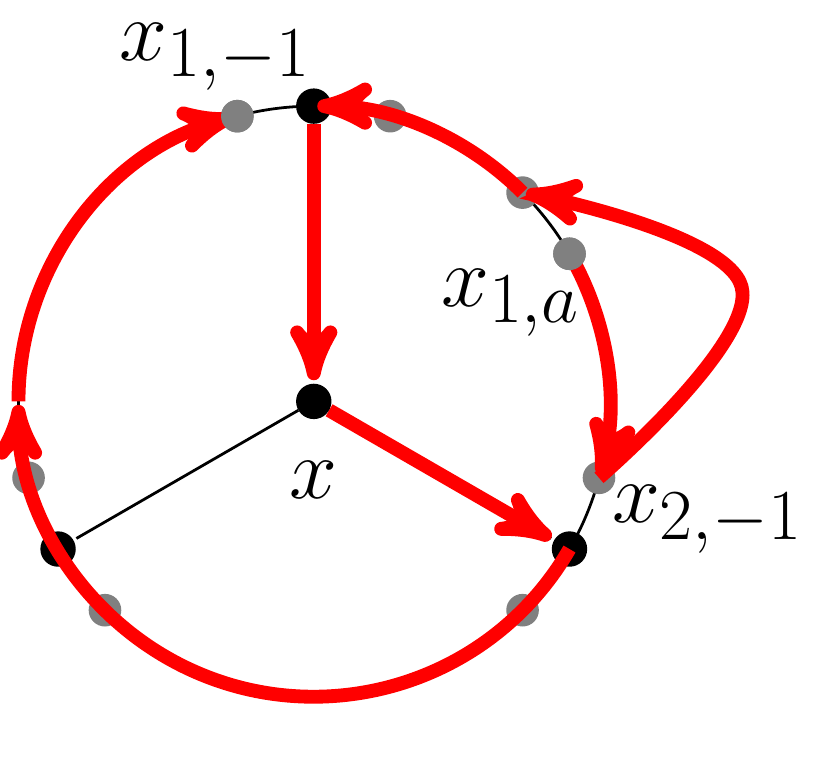}
    \caption{The path $P$ from $x_{1,a}$ to $x_{1,-1}$.}
    \label{fig:5_1}
\end{figure}

Finally, we argue $x_{1,a}$ and $x_{3,-1}$ have no CON. Suppose $y$ is such a CON; then the cycle
\[
	u_{3}C[u_{3},x_{1,a-1}]x_{1,a-1}y'x_{2,-1}C^-[x_{2,-1},x_{1,a}]x_{1,a}yx_{3,-1}C^-[x_{3,-1},u_{2}]u_2F[u_{2},u_{3}]u_3
\]
is a longer cycle than $C$. So $x_{1,a}$ has no CONs with any of $x_{1,-1},x_{2,-1},x_{3,-1}$; $x_{1,a}$ and $x_{1-1}$ have no crossings, and neither do $x_{2,-1}$ and $x_{3,-1}$. This certifies that $\{x_{1,-1},x_{2,-1},x_{3,-1},x_{1,a}\}$ is a good set, a contradiction to Lemma~\ref{good}.
\end{proof}

\begin{lem}\label{super}
If $i$ has medium type, then for  $x_{i,j} \in \{x_{i,1}, \ldots, x_{i+1, -2}\}$,
\begin{enumerate}
\item[(i)] $x_{i,j}$ and $x_{i+1,1}$ have no CONs and no crossings, and
\item[(ii)] $x_{i,j}$ and $x_{i-1,1}$ have no CONs. 
\end{enumerate}
Symmetrically, $x_{i,j} \in \{x_{i,2}, \ldots, x_{i+1, -1}\}$ and $x_{i,-1}$ have no CONs and no crossings, and $x_{i,j}$ and $x_{i-1,-1}$ have no CONs.
\end{lem}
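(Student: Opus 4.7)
The plan is to proceed by contradiction, combining the assumed CON with the abundance of CONs supplied by Lemma~\ref{lots} and a suitable routing through $F$, to produce either a cycle longer than $C$ or a better triple in the sense of Definition~\ref{better}. Without loss of generality take $i=1$; then $x_{1,1}$ and $x_{2,-1}$ share the medium CON and, by Lemma~\ref{lots}, every vertex in $\{x_{1,2},\ldots,x_{2,-2}\}$ has a CON with each of $x_{1,1}$ and $x_{2,-1}$. The case $j=1$ in part~(i) is immediate from Lemma~\ref{cross0}(i), since $x_{1,1},x_{2,1}\in X^+(T)$ and Lemma~\ref{nocrossings} rules out the crossing; so we may assume $j\geq 2$, and let $y'$ denote a CON of $x_{1,j-1}$ and $x_{2,-1}$ (the medium CON itself if $j=2$).

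For part~(i), suppose $y$ is a CON of $x_{1,j}$ and $x_{2,1}$. Consider the $x_{1,j},x_{2,1}$-path
\[
    P := x_{1,j}\, C[x_{1,j}, x_{2,-1}]\, x_{2,-1}\, y'\, x_{1,j-1}\, C^-[x_{1,j-1}, u_1]\, u_1\, F[u_1, u_3]\, u_3\, C^-[u_3, x_{2,1}],
\]
which contains all of $X_1\cup X_2\cup\{x\}$ and enters $D$ exactly once (through $F[u_1,u_3]$). When $X_3\subseteq V(P)$ (the degenerate case arising from Lemma~\ref{shortsegment2}), Lemma~\ref{path-to-crossings} applied to $P$ forbids CONs of $x_{1,j}$ and $x_{2,1}$ outside $V(P)$; any candidate $y\in V(P)$ is ruled out by arguments in the spirit of Lemma~\ref{lots}, since such a $y$ would create a CON between pairs already excluded by Lemma~\ref{cross0}. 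When $X_3\not\subseteq V(P)$, close $P$ into a cycle using the edge $x_{2,1}\,y\,x_{1,j}$ and show this cycle is strictly longer than $C$: the 2-richness of $D$ (Lemma~\ref{t3r}) supplies enough internal $X$-vertices on the $F$-segment to offset the missed vertices of $X_3$ and of $\{u_2\}\cap X$ when $|T\cap X|\leq 1$, and in the remaining case $|T\cap X|\geq 2$ the bounds from Lemma~\ref{shortsegment} reduce the missed count so that the tiebreaking criteria of Definition~\ref{better} (with Lemma~\ref{cros001} excluding the opposite direction) yield a better triple instead. The no-crossings portion of~(i) follows from Lemma~\ref{path-to-crossings}(ii) applied at each agreed edge of $P$, together with Lemma~\ref{nocrossings}.

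Part~(ii) is handled by the parallel construction targeting $x_{3,1}$, with the $F$-path chosen to connect $u_1$ and $u_2$ so that the closing arc $C[u_2,x_{3,1}]$ wraps clockwise through $u_3$ and covers $X_2\cup\{x_{3,1}\}$; the hypothesized CON $y$ of $x_{1,j}$ and $x_{3,1}$ then closes the construction analogously. The symmetric statement at the end of the lemma follows by applying the same argument after reversing the orientation of $C$. The main obstacle throughout is that the two available CONs plus a single $F$-jump can cleanly traverse only two of the three segments $U_1,U_2,U_3$, so the third segment must be absorbed either by the 2-richness of $D$ or by passing to the tiebreakers of Definition~\ref{better}; this case split is the bookkeeping that drives the bulk of the proof.
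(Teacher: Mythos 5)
Your path $P$ does not satisfy the hypotheses of Lemma~\ref{path-to-crossings}, and this gap is not repairable by the fallback you propose. Tracing your path
\[
    P := x_{1,j}\, C[x_{1,j}, x_{2,-1}]\, x_{2,-1}\, y'\, x_{1,j-1}\, C^-[x_{1,j-1}, u_1]\, u_1\, F[u_1, u_3]\, u_3\, C^-[u_3, x_{2,1}],
\]
the arc $C[x_{1,j},x_{2,-1}]$ together with $C^-[x_{1,j-1},u_1]$ covers $X_1$, the $F$-segment covers $x$, and $C^-[u_3,x_{2,1}]$ covers $X_2$; but no piece of $P$ ever touches $U_3$, so $X_3 \cap V(P)=\emptyset$. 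By Lemma~\ref{shortsegment}, $|X_3|\geq 2$, so the ``degenerate case $X_3\subseteq V(P)$'' you invoke never occurs, and the non-degenerate fallback — closing $P$ via the hypothesized CON $y$ and invoking $2$-richness of $D$ — cannot work either: $2$-richness guarantees only two extra $X$-vertices in $D$, whereas the number of missed vertices in $X_3$ is unbounded, so the closed cycle need not be longer than $C$. The same defect appears in your part~(ii) routing, which picks up $x_{3,1}$ but skips the rest of $X_3$.

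The paper avoids this by choosing the CON differently: by Lemma~\ref{lots}, $x_{1,1}$ and $x_{1,j+1}$ have a CON $y'$, and the path
\[
    P := x_{1,j}\,C^-[x_{1,j},x_{1,1}]\,x_{1,1}\,y'\,x_{1,j+1}\,C[x_{1,j+1},u_2]\,u_2\,F[u_2,u_1]\,u_1\,C^-[u_1,x_{2,1}]\,x_{2,1}
\]
uses the jump $x_{1,1}\to x_{1,j+1}$ to skip $x_{1,j}$ within $U_1$, routes $F$ from $u_2$ to $u_1$, and then the single closing arc $C^-[u_1,x_{2,1}]$ sweeps through all of $U_3$ and $U_2$ at once. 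This path genuinely contains $(X\cap V(C))\cup\{x\}$, so Lemma~\ref{path-to-crossings} applies cleanly. Part~(ii) in the paper is then a direct longer-cycle contradiction using the same $y'$, not a second application of Lemma~\ref{path-to-crossings}. The conceptual point you are missing is that the CON must be chosen to splice out $x_{1,j}$ \emph{inside} $U_1$, so that the long arc $C^-[u_1,x_{2,1}]$ remains free to absorb $U_3$; your choice of CON forces both arcs into $U_1\cup U_2$ and leaves no way to traverse $U_3$.
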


\begin{proof}
Without loss of generality, let $i=1$. Suppose $x_{1,j}$ and $x_{2,1}$ have a common neighbor $y$ (the $x_{1,-1}$ case is symmetric). By Lemma~\ref{lots}, $x_{1,1}$ and $x_{1,j+1}$ have a CON $y'$. By Lemma~\ref{path-to-crossings} and the path 
\[
	P := x_{1,j}C^-[x_{1,j},x_{1,1}]x_{1,1}y'x_{1,j+1}C[x_{1,j+1},u_2]u_2F[u_2,u_1]u_1C^-[u_1,x_{2,1}]x_{2,1},
\]
shown in Figure~\ref{fig:5_3}, $x_{1,j}$ and $x_{2,1}$ share no CONs (otherwise $x_{1,1}$ and $x_{2,1}$ share a CON) and no crossings.

\begin{figure}[h!]
    \centering
    \includegraphics[scale=.5]{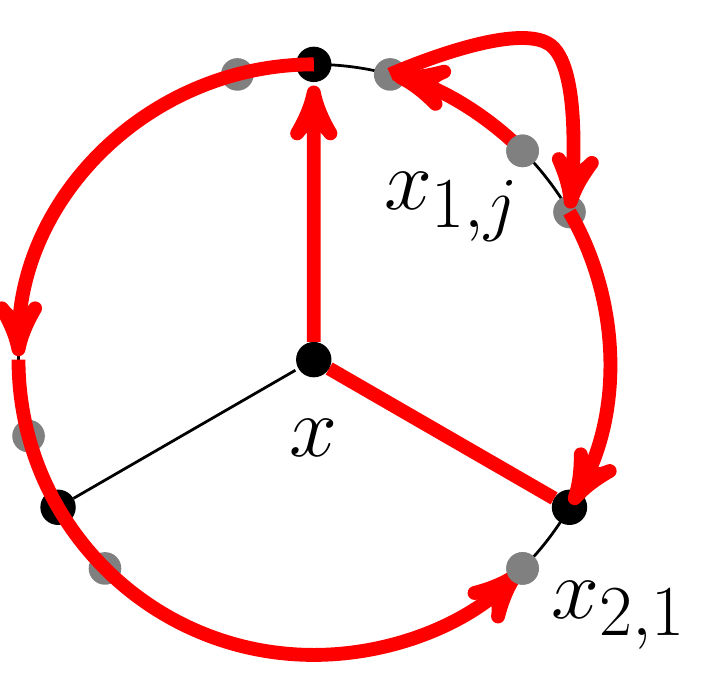}
    \caption{The path $P$ from $x_{1,j}$ to $x_{2,1}$.}
    \label{fig:5_3}
\end{figure}

Suppose that $x_{1,j}$ has a CON $y$ with $x_{3,1}$. By Lemma~\ref{lots}, $x_{1,1}$ and $x_{1,j+1}$ have a CON $y'$. Moreover, by Lemma~\ref{cross0}, $x_{1,1}$ and $x_{3,1}$ can have no CON, so $y \ne y'$. In this case, we obtain a longer cycle than $C$: the cycle
\[
	x_{1,1}C[x_{1,1}, x_{1,j}] x_{1,j} yx_{3,1} C[x_{3,1}, u_{1}] u_{1}F[u_{1} u_{3}]u_{3} C^-[u_{3}, x_{1,j+1}]x_{1,j+1}y' x_{1,1}.
\] 
This is a contradiction, so $x_{1,j}$ and $x_{3,1}$ have no CON. 
The $x_{3,-1}$ case is symmetric.
\end{proof}


\begin{lem}\label{zzl}
If $j\in [3]$ does not have medium type, then every $i\in [3]$ that has medium type also has long type. \end{lem}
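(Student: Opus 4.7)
I would prove this by contradiction, finding a $4$-element good set to contradict Lemma~\ref{good}. By cyclic symmetry, assume $i = 1$ has medium type with CON $b$ of $x_{1,1}$ and $x_{2,-1}$, and suppose for contradiction that $1$ does not have long type; let $j \in \{2, 3\}$ denote an index without medium type.

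The key idea exploits the internal $X$-vertex $x_{1,2}$ of $U_1$. When $|X_1| \geq 3$, Lemma~\ref{lots} makes $1$ abundant, so Lemma~\ref{super} implies that $x_{1,2}$ shares no CON with any of $x_{1,-1}, x_{2,1}, x_{3,-1}, x_{3,1}$ and no crossings with $x_{2,1}$ or with $x_{1,-1}$; moreover $\widetilde{T}=T$ by Lemma~\ref{td3}, so $x_{1,2}\notin\widetilde{T}$ and hence has no CON with $x$. I would then take
\[
W = \begin{cases}
\{x_{1,2}, x_{1,-1}, x_{2,1}, x_{3,-1}\} & \text{if } j = 2,\\
\{x_{1,2}, x_{1,-1}, x_{2,1}, x_{3,1}\} & \text{if } j = 3,
\end{cases}
\]
partitioned respectively as $\{x_{1,2}, x_{2,1}\} \cup \{x_{1,-1}, x_{3,-1}\}$ and $\{x_{1,2}, x_{1,-1}\} \cup \{x_{2,1}, x_{3,1}\}$. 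In either case the pairs are crossing-free by Lemma~\ref{super} combined with (the counterclockwise version of) Lemma~\ref{nocrossings}. The cross-pair CONs within $\{x\}\cup W$ are all ruled out: the pair $(x_{1,-1}, x_{2,1})$ by the no-long-type assumption; the pair $(x_{2,1}, x_{3,-1})$ for $j=2$ or $(x_{1,-1}, x_{3,1})$ for $j=3$ by the no-medium-$j$ assumption; and the remaining pairs by Lemmas~\ref{cross0} and~\ref{super}. Thus $W$ is a good set of size $4$, contradicting Lemma~\ref{good}.

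In the remaining case $|X_1| = 2$ (where $x_{1,2}=x_{2,-1}$), a cascade is required. By Lemma~\ref{shortsegment2}, $2$ has no short or long type. If $j=2$, the set $\{x_{1,-1}, x_{2,-1}, x_{2,1}, x_{3,1}\}$, partitioned by Lemma~\ref{nocrossings} into two crossing-free pairs, is good unless $3$ has medium type, so medium-$3$ must hold. Repeating the main construction with the roles of $1$ and $3$ swapped (using $x_{3,2}$ when $|X_3|\geq 3$, or Lemma~\ref{shortsegment2} for $i=3$ together with the set $\{x_{1,-1}, x_{3,-1}, x_{1,1}, x_{2,1}\}$ when $|X_3|=2$) forces long-$3$, and the set $\{x_{2,-1}, x_{3,-1}, x_{2,1}, x_{3,1}\}$ then forces short-$3$; so $3$ has all three types. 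The final contradiction in this tightly constrained configuration is obtained either by constructing a longer cycle than $C$ using the four CONs (medium-$1$'s $b$, medium-$3$'s $b_3$, short-$3$'s $c$, long-$3$'s $c_3$) together with the fan $F$, or by a refined good-set argument using an interior vertex of $U_2$ (existing since $|X_2|\geq 3$ by yet another application of Lemma~\ref{shortsegment2}) with crossing bounds from Lemma~\ref{cros12}. The case $j=3$ is symmetric. I anticipate that the main obstacle will be executing this final step cleanly without a circular invocation of Lemma~\ref{zzl}.
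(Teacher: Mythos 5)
Your argument for the case $j = i-1$ (your $j=3$) is exactly the paper's: same set $\{x_{1,2}, x_{1,-1}, x_{2,1}, x_{3,1}\}$, same partition, same justifications via Lemmas~\ref{super}, \ref{cross0}, \ref{nocrossings}. However, you restrict this to $|X_1| \geq 3$ and route the $|X_1| = 2$ case into the cascade unnecessarily: the paper shows that when $x_{1,2} = x_{2,-1}$, Lemma~\ref{shortsegment2} (``$2$ has no short or long type'') supplies precisely the missing facts that $x_{1,2}$ has no CON with $x_{2,1}$ (no short-$2$) and no CON with $x_{3,1}$ (no long-$2$), so the same four-element set is already good. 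You cite Lemma~\ref{shortsegment2} but do not draw this conclusion from it.

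The genuine gap is in the $j = i+1$ (your $j=2$) case. The paper dispatches it in one line by reorienting $C$, which literally turns the $j=i+1$ configuration into a $j'=i'-1$ configuration and lets one reuse the \emph{same} set and \emph{same} lemmas (now applied to the reversed cycle); in the original orientation this amounts to taking $A = \{x_{2,-2}, x_{1,-1}, x_{2,1}, x_{3,-1}\}$, whose partition $\{x_{2,-2},x_{2,1}\}\cup\{x_{1,-1},x_{3,-1}\}$ is crossing-free for \emph{every} $|X_1|\ge 2$ (the first pair by Lemma~\ref{super}(i), the second by Lemma~\ref{nocrossings}), and whose remaining non-CON requirements are supplied by the reversed Lemma~\ref{shortsegment2} when $|X_1|=2$. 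You instead chose $x_{1,2}$ in place of $x_{2,-2}$, which works when $|X_1|\ge 3$ but collapses to $x_{2,-1}\in X^-(T)$ when $|X_1|=2$, so your partition pair $\{x_{2,-1},x_{2,1}\}$ has no crossing-free guarantee from the available lemmas. The cascade you then propose (``$3$ has medium, hence long, hence short type, hence all three types, hence a contradiction somehow'') is not carried out; you explicitly flag uncertainty about how to close it, and closing it does require new cycle constructions that you do not supply. In short: the right repair is not a cascade but simply to observe, as the paper does, that the entire argument is invariant under reversing the orientation of $C$, which handles $j=i+1$ and the degenerate segment $|X_i|=2$ uniformly.
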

\begin{proof}
Without loss of generality, suppose $i$ has medium type but $j=i-1$ does not. The case where $j=i+1$ is symmetric, after reorienting $C$. It suffices to show that in such a case, $i$ has long type.

By Lemma~\ref{shortsegment}, we may assume $x_{i,1} \ne x_{i+1, -1}$. Let $A = \{x_{i,2},x_{i+1,1}, x_{i,-1}, x_{i-1,1}\}$.  By Lemma~\ref{super}, $x_{i,-1}$ and $x_{i,2}$ have no CONs or crossings; by Lemma~\ref{cross0} and Lemma~\ref{nocrossings}, $x_{i-1,1}$ and $x_{i+1,1}$ have no CONs or crossings.

If $x_{i,2} \ne x_{i+1,-1}$, then Lemma~\ref{super} further tells us that $x_{i,2}$ has no CONs with $x_{i-1,1}$ or $x_{i+1,1}$. If $x_{i,2} = x_{i+1,-1}$ then Lemma~\ref{shortsegment2} gives the same conclusion. 

By assumption, $i-1$ does not have medium type, so $x_{i-1,1}$ and $x_{i,-1}$ have no CONs. If $x_{i,-1}$ and $x_{i+1,1}$ also have no CONs, then $A$ is a good set, contradicting Lemma~\ref{good}. Therefore $x_{i,-1}$ and $x_{i+1,1}$ must have a CON; in other words, $i$ has long type.
\end{proof}

The three previous lemmas help us to prove the main result of this subsection:
\begin{lem}\label{nomed}
No $i \in [3]$ has medium type. \end{lem}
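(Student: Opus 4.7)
The plan is to assume for contradiction that some $i_0 \in [3]$ has medium type and to derive a contradiction either by producing a good set of size $4$ (contradicting Lemma~\ref{good}) or by constructing a cycle $C'$ to which Lemma~\ref{cros001} applies. The opening deduction combines Lemma~\ref{lots} (medium implies abundant), Lemma~\ref{manycon} (some index is not abundant), Lemma~\ref{zzl} (if some index lacks medium type, then every medium index also has long type), and Lemma~\ref{onelong} (at most one long-type index) to conclude that $i_0$ is simultaneously the unique medium-type and unique long-type index. WLOG $i_0 = 1$; write $a \in Y$ for the medium-$1$ CON of $x_{1,1}$ and $x_{2,-1}$ and $b \in Y$ for the long-$1$ CON of $x_{1,-1}$ and $x_{2,1}$, which are distinct by Lemma~\ref{cross0} applied to $\{x_{1,1}, x_{2,1}\} \subset X^+(T)$. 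Consequently, the only possible CONs among $\{x_{i,\pm 1} : i \in [3]\}$ are $a$, $b$, and the short-type CONs $c_1, c_2, c_3$.

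The primary strategy is to exhibit a good set of $4$ vertices. Consider the two candidates
\[
  W_{23} := \{x_{2,-1}, x_{2,1}, x_{3,-1}, x_{3,1}\}\quad\text{and}\quad W_{13} := \{x_{1,-1}, x_{1,1}, x_{3,-1}, x_{3,1}\},
\]
each partitioned into its $X^+(T)$ and $X^-(T)$ halves so that Lemma~\ref{nocrossings} handles the crossing condition. Using Lemma~\ref{cross0} and the absence of medium/long type at indices $2$ and $3$, the set $W_{23}$ is good unless both $2$ and $3$ have short type, and $W_{13}$ is good unless both $1$ and $3$ have short type. If either is good, Lemma~\ref{good} yields the contradiction; otherwise $3$ has short type together with at least one of $\{1, 2\}$.

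In that remaining case I would build the rerouted cycle
\[
  C' := u_2\, x_{2,1}\, b\, x_{1,-1}\, C^-[x_{1,-1}, u_3]\, u_3\, F[u_3, x, u_1]\, u_1\, x_{1,1}\, a\, x_{2,-1}\, u_2,
\]
which uses both chords $a, b$ and the fan through $x$ to swap $x$ into $V(C)$ in place of $x_{3,-1}$. If $|C'| > |C|$ we are done; otherwise $|C'| = |C|$ with $V(C')-V(C) = \{x, a, b\}$ and $V(C)-V(C') = \{x_{3,-1}, y^+(x_{1,1}), y^+(x_{2,1})\}$. When $|T \cap X| \le 1$, Lemma~\ref{t3r} makes $D$ $2$-rich, so $F[u_3, x, u_1]$ can be replaced by a $u_3, u_1$-path in $D$ with two internal $X$-vertices, producing a cycle strictly longer than $C$. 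When $|T \cap X| \ge 2$, modify the $U_2$-portion of $C'$ so that it also contains $y^-(x_{3,-1}) = y^+(x_{2,1})$---using the short-$2$ chord $c_2$ when available, or symmetrically taking the omitted vertex to be $x_{1,-1}$ or $x_{3,1}$---and then apply Lemma~\ref{cros001} with $x' = x_{3,-1}$ to obtain a triple strictly better than $(C, x, F)$.

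The main obstacle is this $|T \cap X| \ge 2$ sub-case: without the $2$-rich guarantee, the routing of $C'$ must be tailored to contain both of $x_{3,-1}$'s $C$-neighbors, which forces one to invoke the short-type chord $c_2$ or $c_3$ as an additional shortcut. The bookkeeping splits along whether each of $u_1, u_2, u_3$ lies in $X$ or $Y$ and whether $|X_i| = 2$ or $|X_i| \ge 3$, and the most delicate sub-cases further require Lemma~\ref{super} to exclude unwanted CONs with interior $X$-vertices such as $x_{1,2}$.
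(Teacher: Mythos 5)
Your opening is the same as the paper's: use Lemmas~\ref{lots}, \ref{manycon}, \ref{zzl}, and \ref{onelong} to reduce to the case where a single index, say $1$, has both medium and long type. After that, however, the argument has genuine gaps.

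First, the goodness analysis of $W_{23}$ and $W_{13}$ is off: both sets contain the pair $\{x_{3,-1},x_{3,1}\}$, whose CON is exactly a short-type configuration at $3$. So a short type at $3$ \emph{alone} already ruins both candidates; what you can actually conclude when neither is good is "$3$ has short type, or both $1$ and $2$ have short type," which is weaker than what you state and includes the awkward case where only $3$ has short type. This is precisely why the paper abandons such symmetric candidates and instead builds the asymmetric set $X'=\{x_{1,2},x_{2,1},x_{3,-1},x_{1,-b}\}$ with $b$ chosen to be the least index for which $x_{2,1}$ and $x_{1,-b}$ share no CON. That set deliberately avoids pairing $x_{3,-1}$ with $x_{3,1}$, sidestepping the short-at-$3$ obstruction entirely.

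Second, the rerouted cycle $C'$ is not traced correctly. As written, $C'$ uses one edge from $u_2$ to $x_{2,1}$, then the chord $b$ to jump directly to $x_{1,-1}$, then $C^-[x_{1,-1},u_3]$, then the fan, then one edge to $x_{1,1}$, then the chord $a$ to $x_{2,-1}$, then one edge back to $u_2$. This covers (most of) $U_3$ and only the four boundary vertices $x_{1,1}, x_{2,-1}, x_{2,1}, x_{3,-1}$ from $U_1\cup U_2$; the interiors of $U_1$ and $U_2$ are skipped entirely. So the claim $V(C)-V(C')=\{x_{3,-1},\,y^+(x_{1,1}),\,y^+(x_{2,1})\}$ is false unless $|X_1|=2$ and $|X_2|=2$, which you cannot assume (and $|X_2|\ge 2$ is guaranteed only by Lemma~\ref{shortsegment}, with no upper bound). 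In addition, the steps $u_2\to x_{2,1}$ and $u_1\to x_{1,1}$ are single edges only when $u_2,u_1\in Y$; if either lies in $X$ the cycle is not even well-formed without inserting the intermediate $Y$-vertex. Finally, you concede that the $|T\cap X|\ge 2$ branch is unresolved and requires a case analysis you have not carried out. Taken together, these are substantive gaps, not bookkeeping: the paper's proof of this lemma is driven by the choice of $b$ and the non-crossing/non-CON arguments via Lemma~\ref{path-to-crossings}, and none of that machinery is replaced by what you propose.
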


\begin{proof} Suppose the lemma does not hold. If all $i\in [3]$ have medium type, then by
Lemma~\ref{lots}, all of them are abundant, a contradiction to Lemma~\ref{manycon}. Thus there is a $j\in [3]$ that does 
not have medium type. Then by Lemma~\ref{zzl}, each $i\in [3]$ that has medium type also has long type. Now
Lemma~\ref{onelong} yields
 that only one $i$ can have medium type. Suppose by symmetry that this $i$ is $1$.


Let $b$ be the smallest integer such that $x_{2,1}$ and $x_{1,-b}$ have no CON, and consider instead the set $X'=\{x_{1,2},x_{2,1},x_{3,-1},x_{1,-b}\}$. Let $y$ be the CON of $x_{1,-b+1}$ and $x_{2,1}$. By Lemma~\ref{path-to-crossings} and the path
\[
	x_{1,-b}C^-[x_{1,-b},u_3]u_3F[u_3,u_2]u_2C^-[u_2,x_{1,-b+1}]x_{1,-b+1}yx_{2,1}C[x_{2,1},x_{3,-1}]x_{3,-1}
\]
shown in Figure~\ref{fig:5_7} (left), $x_{3,-1}$ and $x_{1,-b}$ have no CON (otherwise $x_{3,-1}$ and $x_{2,1}$ have a CON, making $2$ medium-type) and can only cross at a vertex $x_{1,j}$ for $j\geq 1$ or a vertex $x_{1,-a}$ where $a<b$. Note by Lemma~\ref{cross1} they cannot cross at $u_1$.

In the first case, if $j > 1$, let $y^-=y^-(x_{1,j})$. Note that $x_{2,-1}$ and $x^-(y^-)$ share a CON $y'$. We get a contradiction by the cycle
\[
	u_3C[u_3,x^-(y^-)]x^-(y^-)y'x_{2,-1}C^-[x_{2,-1},y^-]y^-x_{3,-1}C[x_{3,-1},u_2]u_2F[u_2,u_3]u_3.
\]
If $j=1$, then let $y'$ be a CON of $x_{1,1}$ and $x_{1,2}$, and let $y$ be a CON of $x_{2,1}$ and $x_{1,-b+1}$. Then we get the longer cycle 
\[
	x_{1,1} y' x_{1,2} C[x_{1,2}, u_2] u_2 F[u_2, u_1] u_1 C^-[u_1, x_{1,-b+1}] x_{1,-b+1} y x_{2,1} C^-[x_{2,1}, x_{1,-b}] x_{1,-b} y^+(x_{1,1}) x_{1,1}.
\]
In the second case, let $y'$ be a CON of $x_{2,1}$ and $x_{1,-a}$. Then we get a longer cycle 
\[
	u_3 C[u_3, y^-(x_{1,-a})] y^-(x_{1,-a}) x_{3,-1} C^-[x_{3,-1},x_{2,1}] x_{2,1}y' x_{1,-a} C[x_{1,-a}, u_2] u_2 F[u_2, u_3] u_3.
\]

\begin{figure}[h!]
    \centering
    \includegraphics[scale=.5]{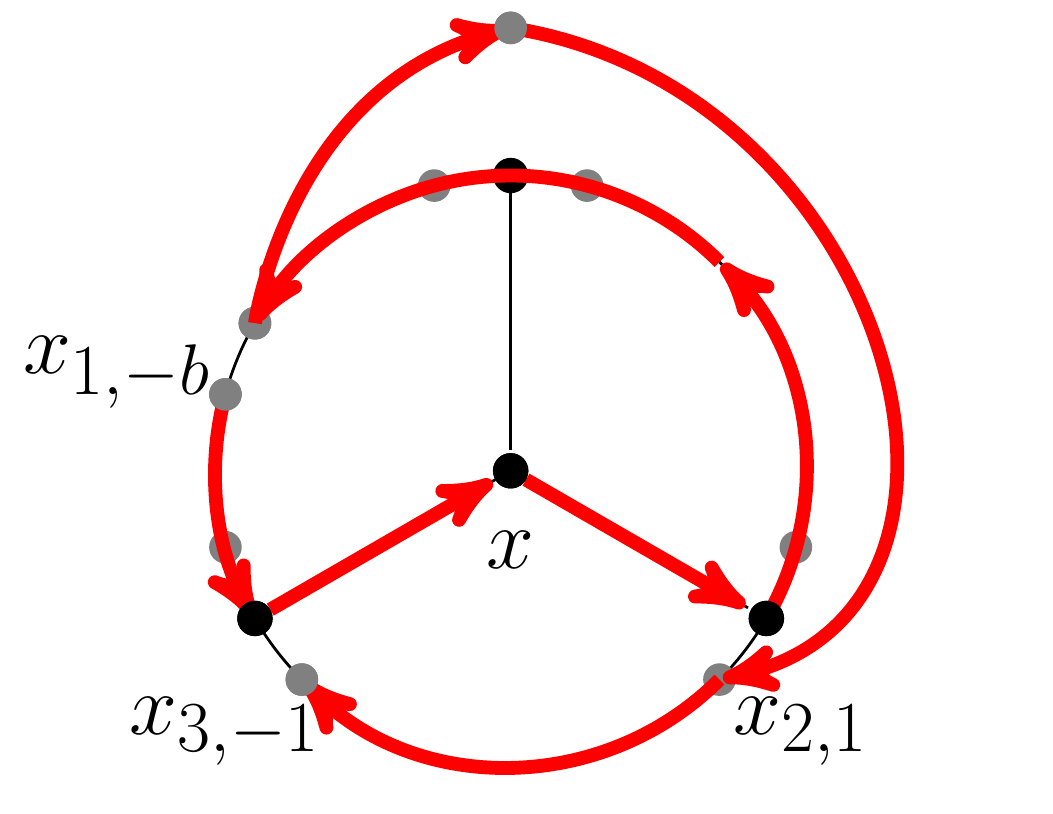}
    \includegraphics[scale=.5]{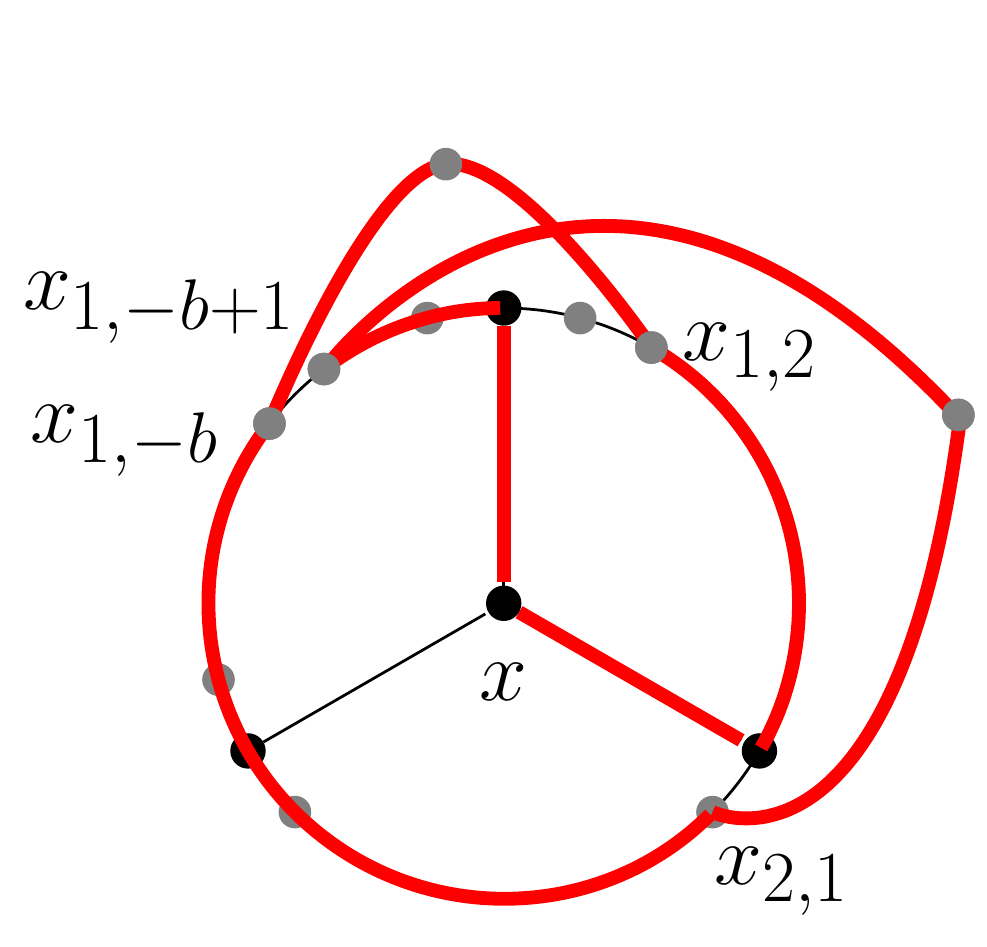}
    \caption{An $x_{1,-b}, x_{3,-1}$-path, and a longer cycle obtained when $x_{1,2}$ and $x_{1,-b}$ have a CON.}
    \label{fig:5_7}
\end{figure}

By Lemma~\ref{super}, $x_{1,2}$ and $x_{2,1}$ have no CONs and no crossings, and $x_{1,2}$ shares no CONs with $x_{3,-1}$. 

Suppose $y'$ is a CON of $x_{1,2}$ and $x_{1,-b}$. By the choice of $b$, $x_{2,1}$ and $x_{1,-b+1}$ have a CON $y$. The cycle
\[
	C' := x_{1,-b+1} y x_{2,1} C[x_{2,1}, x_{1,-b}] x_{1,-b} y' x_{1,2} C[x_{1,2}, u_2] u_2 F[u_2, u_1] u_1 C^-[u_1, x_{1,-b+1}]x_{1,-b+1}
\]
shown in Figure~\ref{fig:5_7}(right) excludes $x_{1,1}$ but contains the rest of $X \cap V(C) - \{x_{1,1}\}$. Moreover, $C'$ contains all but at most four vertices in $Y \cap C$: $y^+(x_{1,-b}), y^+(x_{1,1}),$ and possibly $y^-(x_{1,1})$ or $y^-(x_{2,1})$, if $u_1\in X$ or $u_2 \in X$ respectively. If $D$ is 2-rich, then $|C'| > |C|$, so we may assume that $D$ is not 2-rich, and $d_C(x) \leq 1$ by Lemma~\ref{t3r}. By the choice of $(C, x, F)$ as a best triple, $d_{C'}(x_{1,1}) \leq 1$ as well. Then $d_C(x) + d_{C}(x_{1,1}) \leq 1 + 1 + 4$. This contradicts Lemma~\ref{sum8}, which shows that $x_{1,2}$ and $x_{1,-b}$ share no CONs.


Since $2$ does not have medium type, $x_{2,1}$ and $x_{3,-1}$ share no CONs. By the definition of $b$, $x_{1,-b}$ and $x_{2,1}$ share no CONs. Thus, $X'$ is good, a  contradiction to Lemma~\ref{good}.
\end{proof}

%
%
%

\subsection{Eliminating long-type configurations}
\label{one-long-section}
\begin{lem}\label{nolong}
No $i \in [3]$ has long type. \end{lem}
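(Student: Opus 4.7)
The plan is to suppose for contradiction that some $i\in[3]$ has long type. By Lemma~\ref{onelong} and symmetry, I assume $i=1$ is the unique long-type index, so $x_{1,-1}$ and $x_{2,1}$ share a CON $y^*$. Since Lemma~\ref{nomed} forbids medium types, Lemma~\ref{type-lemma} applied to $i=1$ and $i=2$ forces both to have short type (option~(iii) would demand $i+1\in\{2,3\}$ to be long, which fails), producing CONs $y_1\in N(x_{1,-1})\cap N(x_{1,1})$ and $y_2\in N(x_{2,-1})\cap N(x_{2,1})$. The three CONs $y^*,y_1,y_2$ are pairwise distinct, since any coincidence would yield a CON between two vertices of $X^+(T)$ (or of $X^-(T)$), contradicting Lemma~\ref{cross0}. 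Additionally, $|X_1|\geq 3$: otherwise $x_{1,2}=x_{2,-1}$ and Lemma~\ref{shortsegment2} would strip index $2$ of short type.

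The argument then splits on whether $i=3$ also has short type. If it does not, the plan is to verify that $W=\{x_{1,1},x_{2,-1},x_{3,-1},x_{3,1}\}$ is a good set of size~$4$, contradicting Lemma~\ref{good}. The pair $(x_{1,1},x_{3,1})$ lies in $X^+(T)$ and the pair $(x_{2,-1},x_{3,-1})$ in $X^-(T)$; these have no CON and no crossing by Lemma~\ref{cross0}, Lemma~\ref{nocrossings}, and their mirror statements (obtained by reversing the orientation of $C$). The four mixed pairs $(x_{1,1},x_{2,-1})$, $(x_{1,1},x_{3,-1})$, $(x_{2,-1},x_{3,1})$, $(x_{3,-1},x_{3,1})$ would correspond respectively to medium-1, long-3, long-2, and short-3, each ruled out by Lemma~\ref{nomed}, Lemma~\ref{onelong}, or the case hypothesis. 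The partition $W=\{x_{1,1},x_{3,1}\}\cup\{x_{2,-1},x_{3,-1}\}$ supplies property~(iii), and $d_C(x)\leq t=3<4$ gives property~(i).

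If instead $i=3$ has short type, a fourth CON $y_3\in N(x_{3,-1})\cap N(x_{3,1})$ appears, and no $4$-subset of $X^+(T)\cup X^-(T)$ is good, because every choice hits one of $y_1,y_2,y_3,y^*$. Here the plan is to use $F$ together with the shortcuts $y_1,y^*,y_3$ (and possibly $y_2$) to build a cycle that visits $x$ and almost all of $V(C)$; a first candidate is
\[
C' = u_3\,F[u_3,u_1]\,u_1\,C[u_1,x_{1,1}]\,x_{1,1}\,y_1\,x_{1,-1}\,y^*\,x_{2,1}\,C[x_{2,1},x_{3,-1}]\,x_{3,-1}\,y_3\,x_{3,1}\,C[x_{3,1},u_3],
\]
which bypasses only the interior of $U_1$ and the immediate vicinity of $u_2$. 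A vertex count, with case splits on the parities of $u_1,u_2,u_3$ and on whether $D$ is $2$-rich (Lemma~\ref{t3r}), should show either $|C'|>|C|$ or $|C'|=|C|$ with $(C',x',F')$ beating $(C,x,F)$ under criteria~(c)--(e) of Definition~\ref{better}.

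The hardest regime will be when $|X_1|$ is large, because the composite shortcut $y_1 x_{1,-1} y^*$ bypasses $|X_1|-1$ interior $X$-vertices of $U_1$ while reclaiming only $x_{1,-1}$ and $x_{2,1}$, so the direct cycle construction falls short of $|C|$. I plan to handle this by applying Lemma~\ref{path-to-crossings} to a path of the form $x_{1,1}\,y_1\,x_{1,-1}\,y^*\,x_{2,1}\,y_2\,x_{2,-1}$ extended through $F$, deducing that every $x_{1,a}$ shares a CON with both $x_{1,1}$ and $x_{2,-1}$. This would make index $1$ abundant, and symmetric arguments for indices $2$ and $3$ would contradict Lemma~\ref{manycon}.
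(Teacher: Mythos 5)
Your argument splits into two cases according to whether the third index has short type, and the two cases fare very differently.

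\textbf{Case 1 (the third index does not have short type) is correct and genuinely different from the paper.} Your good set $W=\{x_{1,1},x_{2,-1},x_{3,-1},x_{3,1}\}$ lies entirely inside $X^+(T)\cup X^-(T)$, whereas the paper's good set $\{x_{1,-1},x_{1,2},x_{2,1},x_{3,1}\}$ (paper's labels, long type at index $3$) deliberately includes the interior vertex $x_{1,2}$ of $U_1$. Each of your pair-by-pair checks is right: the two within-block pairs are handled by Lemma~\ref{cross0} and Lemma~\ref{nocrossings} (and their orientation-reversed forms), and the four cross-block pairs are ruled out because a CON there would produce a medium-1, long-2, long-3, or short-3 configuration, all forbidden under the case hypothesis and Lemmas~\ref{onelong},~\ref{nomed}. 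This is a clean alternative route in the subcase where it applies.

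\textbf{Case 2 (the third index also has short type) has a genuine gap.} Your observation that no $4$-subset of $X^+(T)\cup X^-(T)$ can be good here is correct (a short routine check shows the CON-graph on those six vertices, with edges from $y_1,y_2,y_3,y^*$, has no independent set of size $4$). But your replacement strategy does not work. First, the displayed $C'$ is not a cycle: the final segment $C[x_{3,1},u_3]$ is the \emph{clockwise} $x_{3,1},u_3$-path, which wraps all the way around $C$ and revisits $u_1$, $x_{1,1}$, $x_{1,-1}$, $x_{2,1}$, $x_{3,-1}$. Presumably you meant $C^-[x_{3,1},u_3]$, but even with that fix the resulting cycle misses the interiors of \emph{both} $U_1$ and $U_3$ (as well as $u_2$), not ``only the interior of $U_1$ and the vicinity of $u_2$''; since you have already shown $|X_1|\ge 3$ and Lemma~\ref{shortsegment} gives $|X_3|\ge 2$, this cycle loses more $X$-vertices than it gains and is shorter than $C$ in general. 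Second, the fallback to abundance is not substantiated: Lemma~\ref{path-to-crossings} yields only negative conclusions (no CON outside the path, no crossing at an agreeing edge), not the positive claim that each $x_{1,a}$ has a CON with both endpoints, and Lemma~\ref{last-lemma} (the paper's abundance result) is stated under the hypothesis of \emph{no} long-type configurations, so it cannot be invoked while index $1$ still has long type. Reproving abundance in the presence of a long type would be substantial new work, essentially a variant of Subsection~\ref{short-section}. The paper sidesteps all of this by choosing a good set that dips into the interior of a segment: it uses Lemma~\ref{path-to-crossings} to control the pair $\{x_{1,-1},x_{1,2}\}$, and two explicit longer-cycle constructions, built from the long-type CON $a$ and the short-type CON $b$, to rule out CONs of $x_{1,2}$ with $x_{2,1}$ and with $x_{3,1}$. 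Your write-up needs an argument of that kind for Case~2 to close.
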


\begin{proof} Suppose some $i \in [3]$ has long type. By Lemma~\ref{onelong},
 there is only one such $i$. By symmetry, assume
 $x_{3,-1}$ and $x_{1,1}$ have a CON $a$, i.e., only $3$ has long type. 
 Then by  Lemma~\ref{type-lemma}, since no $j$ has medium type, $1$ has short type, which means $x_{1,-1}$ and $x_{1,1}$  have a CON $b$.

Let $W = \{x_{1,-1}, x_{1,2}, x_{2,1}, x_{3,1}\}$. We will show that $W$ is a good set.

By Lemma~\ref{cross0} and Lemma~\ref{nocrossings},  $x_{2,1}$ and $x_{3,1}$ have no CON or crossings. Also, $x_{1,-1}$ and $x_{1,2}$ have no CON or crossings: This follows from Lemma~\ref{path-to-crossings}, as shown on the left in Figure~\ref{fig:s22}, where the path 
\[
	P := x_{1,-1}C^-[x_{1,-1},u_3]u_3F[u_3,u_1]u_1C[u_1,x_{1,1}]x_{1,1}ax_{3,-1}C^-[x_{3,-1},x_{1,2}]x_{1,2}
\]
agrees with the cycle $C$ on all edges.

We now show that the remaining  pairs in $W$ do not have CONs.
If $x_{1,-1}$ and $x_{2,1}$ have a CON, then we have a second long-type configuration. If $x_{1,-1}$ and $x_{3,1}$ have a CON, 
the we have a medium-type configuration.

If $x_{1,2}$ and $x_{2,1}$ have a CON $c$, then the cycle 
\[
	u_3C[u_3,x_{1,1}]x_{1,1}ax_{3,-1}C^-[x_{3,-1},x_{2,1}]x_{2,1}cx_{1,2}C[x_{1,2},u_2]u_2F[u_2,u_3]u_3
\]
 is longer than $C$, as shown in the middle of Figure~\ref{fig:s22}. Finally, if $x_{1,2}$ and $x_{3,1}$ have a CON~$c$ then  the cycle
\[
	x_{3,1}C[x_{3,1},x_{1,-1}]x_{1,-1}bx_{1,1}C^-[x_{1,1},u_1]u_1F[u_1,u_3]u_3C^-[u_3,x_{1,2}]x_{1,2}cx_{3,1}
\]
 is longer than $C$, as shown on the right in Figure~\ref{fig:s22}.


\begin{figure}[h!]
    \centering
    \includegraphics[scale=.5]{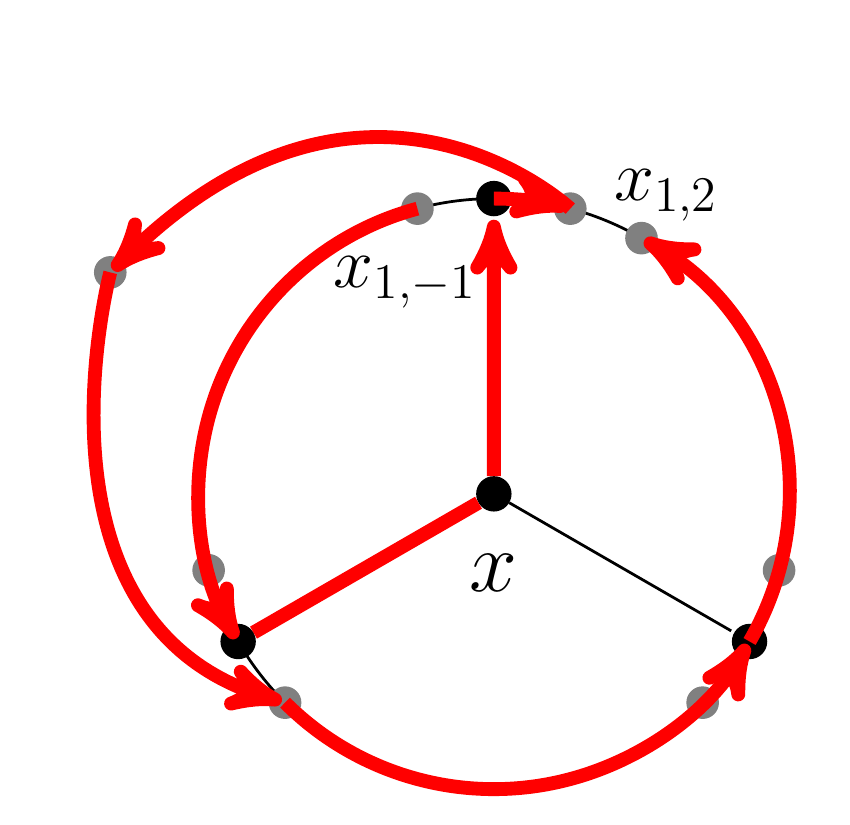}
    \includegraphics[scale=.5]{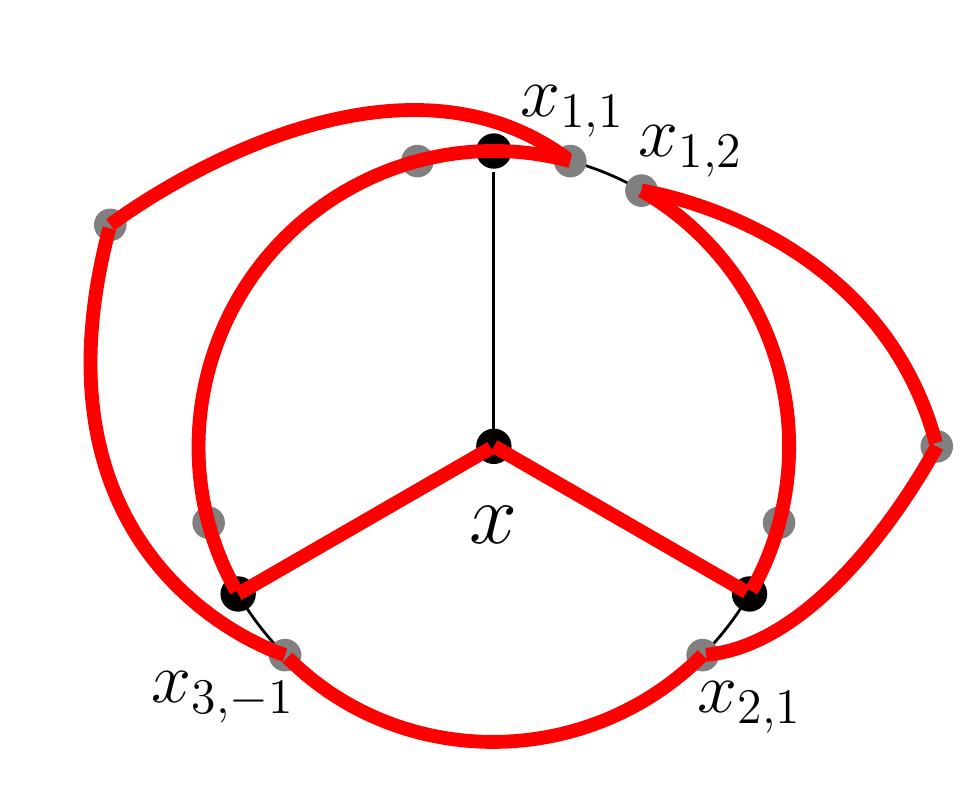}
    \includegraphics[scale=.5]{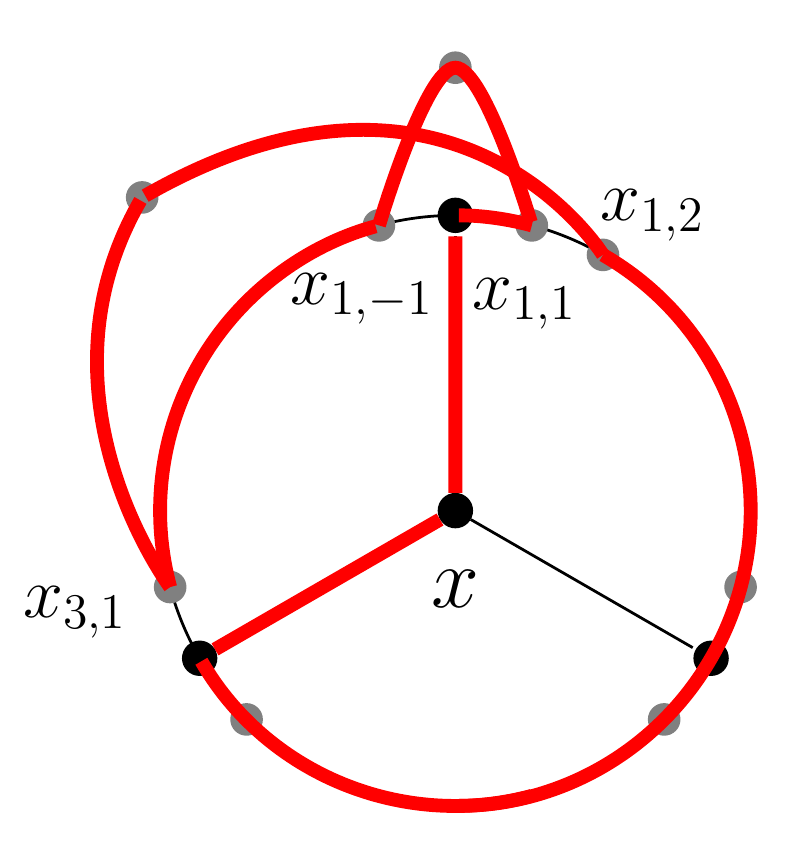}
    \caption{An $x_{1,-1}, x_{1,2}$-path, and longer cycles obtained if $x_{1,2}$ has a CON with $x_{2,1}$ or $x_{3,1}$.}
    \label{fig:s22}
\end{figure}

Therefore $W$ is a good set, contradicting Lemma~\ref{good}.
 \end{proof}

\subsection{Eliminating short-type configurations and finishing the proof of Theorem~\ref{jackson6}}
\label{short-section}

\begin{lem}\label{last-lemma}
If there are no long-type configurations and no medium-type configurations, then every $i\in [3]$ is abundant.
\end{lem}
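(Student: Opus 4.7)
The approach is to adapt the inductive argument of Lemma~\ref{lots}, using the short-type CONs in place of the medium-type CON that provided Lemma~\ref{lots}'s base case. Under our hypotheses, Lemma~\ref{type-lemma} forces every $i\in[3]$ to have short type, so we have three CONs $b_1,b_2,b_3$ (distinct by Lemma~\ref{cross0}), each $b_i$ a common neighbor of $x_{i,-1}$ and $x_{i,1}$ outside $V(C)$.

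Suppose for contradiction some $i$, WLOG $i=1$, is not abundant. By the same symmetry argument implicit in Lemma~\ref{lots}, we may assume a smallest $a\in\{2,\dots,|X_1|-1\}$ with $x_{1,a}$ having no CON with $x_{2,-1}$. The plan is to show that $W=\{x_{1,-1},x_{2,-1},x_{3,-1},x_{1,a}\}$ is a good set, contradicting Lemma~\ref{good}. The pairwise no-CONs among $\{x_{1,-1},x_{2,-1},x_{3,-1}\}$ (by the reversed-orientation version of Lemma~\ref{cross0}), the no-crossings between $x_{2,-1}$ and $x_{3,-1}$ (by the reversed version of Lemma~\ref{nocrossings}), and the no-CON between $x_{1,a}$ and $x_{2,-1}$ all hold as in Lemma~\ref{lots}. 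The remaining work is to rule out CONs and crossings between $x_{1,a}$ and each of $x_{1,-1}, x_{3,-1}$.

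For $a\ge 3$, the minimality of $a$ furnishes a CON $y'$ of $x_{1,a-1}$ with $x_{2,-1}$, and Lemma~\ref{lots}'s argument applies almost verbatim: the path
\[
	P := x_{1,a}\,C[x_{1,a},x_{2,-1}]\,x_{2,-1}\,y'\,x_{1,a-1}\,C^-[x_{1,a-1},u_1]\,u_1\,F[u_1,u_2]\,u_2\,C[u_2,x_{1,-1}]\,x_{1,-1}
\]
together with Lemma~\ref{path-to-crossings} rules out CONs and crossings of $\{x_{1,a},x_{1,-1}\}$ (the only candidate CON inside $P$, namely $y'$, would force $y'$ to be a CON of $x_{1,-1}$ and $x_{2,-1}$, contradicting reversed Lemma~\ref{cross0}), and the same direct longer-cycle construction used in Lemma~\ref{lots} rules out a CON of $\{x_{1,a},x_{3,-1}\}$.

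The base case $a=2$ is where the proof genuinely differs, since no inductive CON $y'$ is available. We use the short-type CON $b_1$ as its replacement: build a $(x_{1,2},x_{1,-1})$-path $P'$ containing $(X\cap V(C))\cup\{x\}$ that enters $D$ exactly once through the fan, uses the chord $b_1$ to bring $x_{1,1}$ onto $P'$, and uses $b_2$ and/or $b_3$ as needed to cover whichever $U_i$ would otherwise be bypassed by the single fan detour. Applying Lemma~\ref{path-to-crossings} to $P'$ rules out external CONs of $\{x_{1,2},x_{1,-1}\}$; the only candidate internal CONs are the chord vertices $b_j$ used in the construction, and each is eliminated because the extra adjacency it would require forces a CON contradicting either reversed Lemma~\ref{cross0}, the absence of medium- and long-type configurations, or the distinctness of the $b_i$'s. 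The analogous no-CON statement for $\{x_{1,2},x_{3,-1}\}$ is handled by a similar $b_1$-based longer-cycle argument. The main obstacle is precisely this base-case path construction: packaging a fan excursion and several chord detours into a single simple path whose $D$-intersection is one contiguous subpath is delicate and likely requires case analysis on whether each $u_i$ lies in $X$ or $Y$ and on the segment sizes $|X_2|$ and $|X_3|$. Once the base case is settled, the inductive step above yields the good set $W$, and Lemma~\ref{good} delivers the contradiction.
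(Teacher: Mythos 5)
The overall strategy — assume some $i$ is not abundant, pick the smallest failing index, and try to certify a good set via a path argument through Lemma~\ref{path-to-crossings} — is in the right spirit, but the key parameter you minimize over is different from the paper's, and that difference is exactly where your proof breaks. You define $a$ as the smallest index with $x_{1,a}$ having no CON with $x_{2,-1}$; the paper instead defines $b$ as the smallest index with $x_{1,-1}$ having no CON with $x_{1,b}$. The paper's choice guarantees that the chord used in the Lemma~\ref{path-to-crossings} path \emph{always exists}, even in the base case $b=2$: the CON of $x_{1,-1}$ and $x_{1,b-1}$ exists by minimality of $b$ combined with the short-type CON of $x_{1,-1}$ and $x_{1,1}$. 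Your choice gives no such anchor when $a=2$, because a CON of $x_{1,1}$ and $x_{2,-1}$ would be precisely a medium-type configuration, which we have excluded. You recognize this and propose to patch the $a=2$ case by threading a path through the short-type chords $b_1,b_2,b_3$, but this is not merely ``delicate'' — it generically fails: a path from $x_{1,2}$ to $x_{1,-1}$ covering $(X\cap V(C))\cup\{x\}$ with a single $D$-excursion must enter and exit $D$ at two of $u_1,u_2,u_3$, and then the interior of the third segment $U_i$ is unreachable on $C$ without a second excursion; since Lemma~\ref{shortsegment} forces $|X_i|\geq 2$ for every $i$, this missing segment is never empty, and chaining further chords (e.g.\ $b_3$ between $x_{3,-1}$ and $x_{3,1}$) again strands the vertices strictly between the chord endpoints. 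So the base case is a genuine gap, not a detail.

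The paper's actual route is substantially different and does not attempt to certify $\{x_{1,-1},x_{2,-1},x_{3,-1},x_{1,2}\}$ as a good set directly. Instead it shows a dichotomy (either $W_b=\{x_{1,-1},x_{1,b},x_{2,-1},x_{3,1}\}$ is good, forcing a contradiction, or $x_{1,b}$ and $x_{2,-1}$ have a CON), then bootstraps this to show $x_{1,c}$ has a CON with $x_{2,-1}$ for all $c\geq b$, then shows $x_{2,1}$ and $x_{2,-2}$ have no CON (via a longer cycle using all the accumulated chords), and finally invokes a six-fold symmetry ($3$ short-type configurations $\times$ $2$ orientations, all ``terminating in the same case'') to conclude $b=2$ and that both required families of CONs exist. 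Your sketch captures only the first good-set step and does not anticipate these later layers, several of which are essential to the conclusion (abundance needs CONs with \emph{both} $x_{i,1}$ and $x_{i+1,-1}$, which the paper obtains by combining two of the six applications).
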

\begin{proof}
By Lemma~\ref{type-lemma}, every $i\in [3]$ has short type and no other types. 

For definiteness, consider $i=1$. By the definition of short type, $x_{1,-1}$ and $x_{1,1}$ have a CON. Let $b>1$ be the least integer such that $x_{1,-1}$ has no CON with $x_{1,b}$. Some such $b$  exists, because $x_{2,-1}$ has no CON with $x_{1,-1}$. Moreover, if  $x_{1,b} = x_{2,-1}$, then we  find a cycle $C'$ longer than $C$: if $y_1$ is a CON of $x_{1,-1}$ and $x_{2,-2}$, and $y_2$ is a CON of $x_{2,-1}$ and $x_{2,1}$, then $y_1 \ne y_2$ (since $x_{1,-1}$ has no CON with $x_{2,-1}$) and therefore
\[
u_1 C[u_1, x_{2,-2}] x_{2,-2}y_1 x_{1,-1} C^-[x_{1,-1}, x_{2,1}]x_{2,1} y_2 x_{2,-1} C[x_{2,-1}, u_2]u_2 F[u_2, u_1] u_1 
\]
is a  cycle longer than $C$. So $b$ exists and $x_{1,b} \ne x_{2,-1}$. Note that this implies $x_{1,2}\neq x_{2,-1}$.

Consider the set $W_b = \{x_{1,-1}, x_{1,b}, x_{2,-1}, x_{3,1}\}$. We will show that it is {\em almost} a good set.
%
%
%

By Lemma~\ref{cross0} and Lemma~\ref{nocrossings},  $x_{1,-1}$ and $x_{2,-1}$ have no CON or crossing. 
A CON of $x_{1,b}$ and $x_{3,1}$ is distinct from any CON of $x_{1,-1}$ and $x_{1,b-1}$ because $x_{1,-1}$ and $x_{3,1}$ have no CON. 
Let $c$ be the CON of $x_{1,-1}$ and $x_{1,b-1}$. By applying Lemma~\ref{path-to-crossings} to the path 
\[
	x_{3,1}C[x_{3,1},x_{1,-1}]x_{1,-1}cx_{1,b-1}C^-[x_{1,b-1},u_1]u_1F[u_1,u_3]u_3C^-[u_3,x_{1,b}]x_{1,b},
\]
as on the left in Figure~\ref{fig:s23}, we see that they can have no other CON, and can only cross at a vertex $x_{1,a}$ with $a<b$.

If such a crossing existed, however, then in particular $x_{3,1}$ would be adjacent to a neighbor of $x_{1,a}$ and letting $c$ be the CON of $x_{1,-1}$ and $x_{1,a+1}$ we would obtain a longer cycle 
\[
	x_{3,1}C[x_{3,1},x_{1,-1}]x_{1,-1}cx_{1,a+1}C[x_{1,a+1},u_3]u_3F[u_3,u_1]u_1C[u_1,y^-(x_{1,a+1})]y^-(x_{1,a+1})x_{3,1}
\] 
as shown on the right of Figure~\ref{fig:s23}. In the special case $a = b-1$, the cycle looks only slightly different. Letting $c$ be the CON of $x_{1,-1}$ and $x_{1,b-1}$, it is 
\[
	x_{3,1}C[x_{3,1},x_{1,-1}]x_{1,-1}cx_{1,b-1}C^-[x_{1,b-1},u_1]u_1F[u_1,u_3]u_3C^-[u_3,y^+(x_{1,b-1})]y^+(x_{1,b-1})x_{3,1}.
\]

We conclude that $x_{1,b}$ and $x_{3,1}$ have no CON or crossings.

By the choice of $b$, $x_{1,-1}$ and $x_{1,b}$ have no CON.
 The pair $x_{1,-1}$ and $x_{3,1}$ have no CON, otherwise a medium-type configuration would be formed. The pair $x_{2,-1}$ and $x_{3,1}$ have no CON, otherwise a long-type configuration would be formed.

\begin{figure}[h!]
    \centering
    \includegraphics[scale=.5]{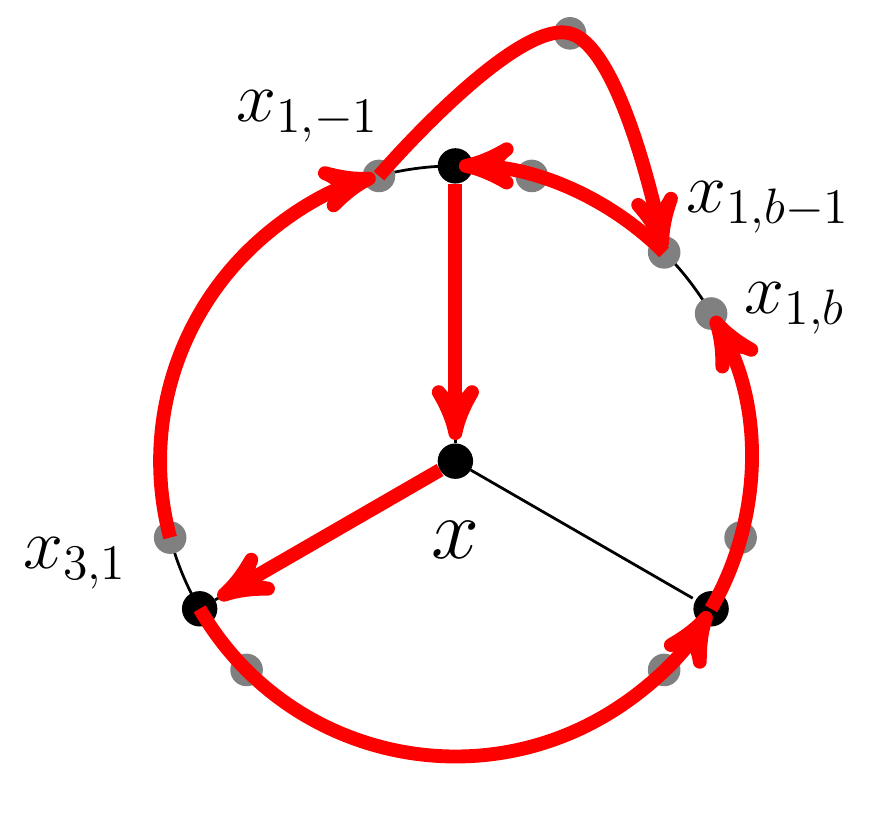}
    \includegraphics[scale=.5]{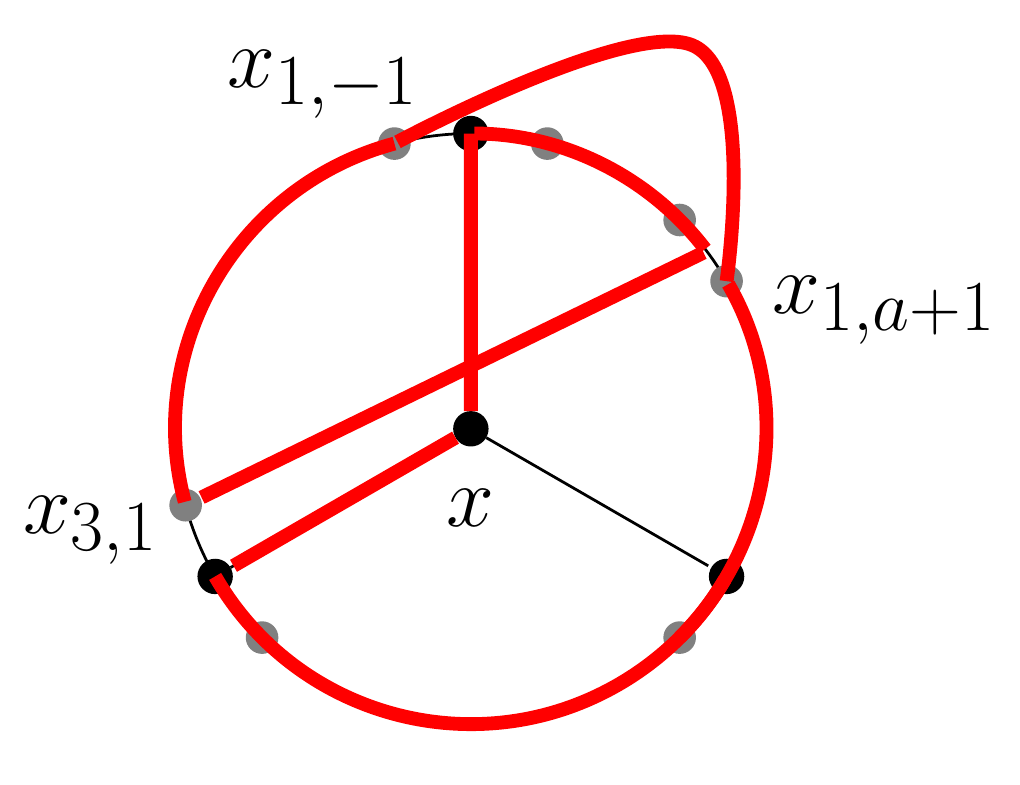}
    \caption{An $x_{3,1},x_{1,b}$-path, and a longer cycle obtained if $x_{1,b}$ and $x_{3,1}$ have a crossing at $x_{1,a}$.}
    \label{fig:s23}
\end{figure}

If  $x_{1,b}$ and $x_{2,-1}$ have no CON, then $W_b$ is a good set, a contradiction to Lemma~\ref{good}.
Thus, $x_{1,b}$ and $x_{2,-1}$ have a CON. 

We now prove that
\begin{equation}\label{last}
\mbox{\em for each $c\geq b$ such that $x_{1,c}\in C[x_{1,b},x_{2,-2}]$, vertices $x_{1,c}$ and $x_{2,-1}$ have a CON.}
\end{equation}
Indeed, suppose~\eqref{last} does not hold and
$c$ is the least  integer such that $c\geq b$ and $x_{1,c}$ has no CON with $x_{2,-1}$. By the previous paragraph, $c>b$.
Consider the set $W_c = \{x_{1,-1}, x_{1,c}, x_{2,-1}, x_{3,-1}\}$. We will show that this is a good set.

Indeed, $x_{1,-1}$ and $x_{2,-1}$ have no CON or crossing, by Lemma~\ref{cross0} and Lemma~\ref{nocrossings}. Any CON of $x_{1,c}$ and $x_{3,-1}$ is distinct from any  CON of $x_{1,c-1}$ and $x_{2,-1}$, since $x_{3,-1}$ and $x_{2,-1}$ have no CON. They have no other CON or crossings, as shown by the path 
\[
	x_{1,c}C[x_{1,c},x_{2,-1}]x_{2,-1}qx_{1,c-1}C^-[x_{1,c-1},u_3]u_3F[u_3,u_2]u_2C[u_2,x_{3,-1}] x_{3,-1}
\] (see the left in Figure~\ref{fig:s24}) and Lemma~\ref{path-to-crossings}, where $q$ is the CON of $x_{2,-1}$ and $x_{1,c-1}$.

We show that the remaining pairs have no CONs. Indeed, $x_{1,c}$ and $x_{2,-1}$ have no CON by our choice of $c$. The pairs $\{x_{1,-1},x_{3,-1}\}$ and $\{x_{2,-1},x_{3,-1}\}$ have no CONs, by Lemma~\ref{cross0}. Finally, suppose $r$ is a CON of $x_{1,-1}$ and $x_{1,c}$ Let $q$ be a CON of $x_{2,-1}$ and $x_{1,c-1}$. Then  the cycle 
\[
	u_2C[u_2,x_{1,-1}]x_{1,-1}rx_{1,c}C[x_{1,c},x_{2,-1}]x_{2,-1}qx_{1,c-1}C^-[x_{1,c-1},u_1]u_1F[u_1,u_2]u_2
\] shown on the right of Figure~\ref{fig:s24} is longer than $C$.

%
\begin{figure}[h!]
    \centering
    \includegraphics[scale=.5]{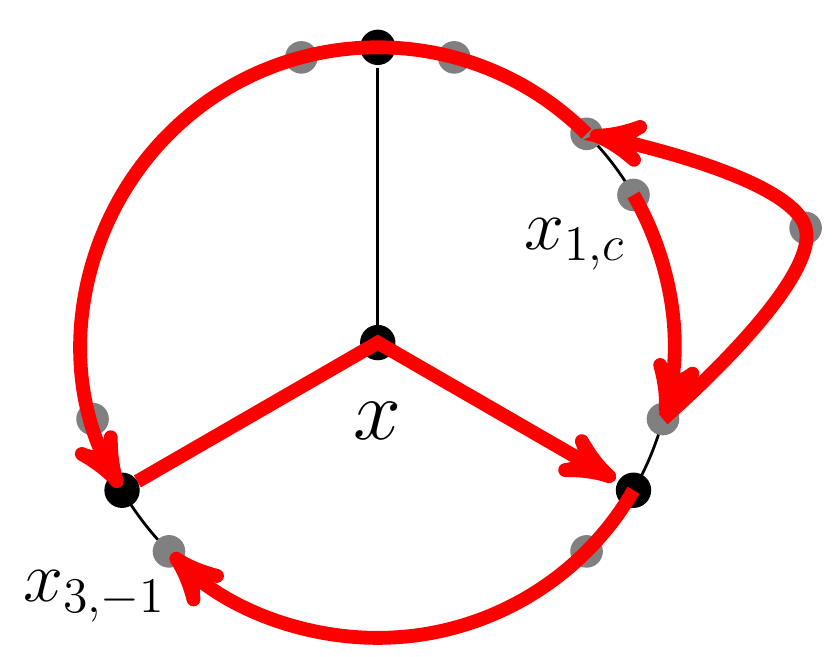}
    \includegraphics[scale=.5]{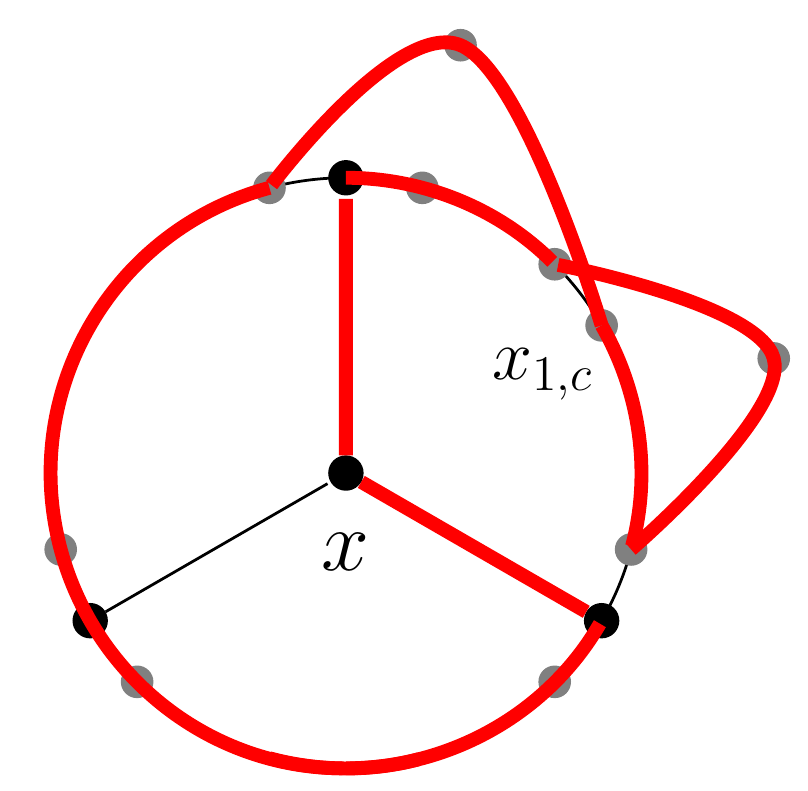}
    \caption{An $x_{1,c},x_{3,-1}$-path, and a longer cycle obtained when $x_{1,-1}$ and $x_{1,c}$ have a CON.}
    \label{fig:s24}
\end{figure}

Therefore we have a good set of size 4, a contradiction to Lemma~\ref{good}. This proves~\eqref{last}. In other words, $x_{1,1}, x_{1,2}, \dots, x_{1,b-1}$ all have a CON with $x_{1,-1}$ while $x_{1,b}, x_{1,b+1}, \dots, x_{2,-2}$ all have a CON with $x_{2,-1}$. Moreover, in this case, $x_{2,1}$ and $x_{2,-2}$ can have no CON, or else we obtain a longer cycle, 
\[
	x_{2,1}C[x_{2,1},x_{1,-1}]x_{1,-1}rx_{1,b-1}C^-[x_{1,b-1},u_1]u_1F[u_1,u_2]u_2C^-[u_2,x_{2,-1}]x_{2,-1}sx_{1,b}C[x_{1,b},x_{2,-2}]x_{2,-2}tx_{2,1},
\] where $r$ is the CON of $x_{1,-1}$ and $x_{1,b-1}$, $s$ is the CON of $x_{2,-1}$ and $x_{1,b}$, and $t$ is the CON of $x_{2,-2}$ and $x_{2,1}$, as shown in Figure~\ref{fig:xc}.
\begin{figure}[h!]
    \centering
    \includegraphics[scale=.5]{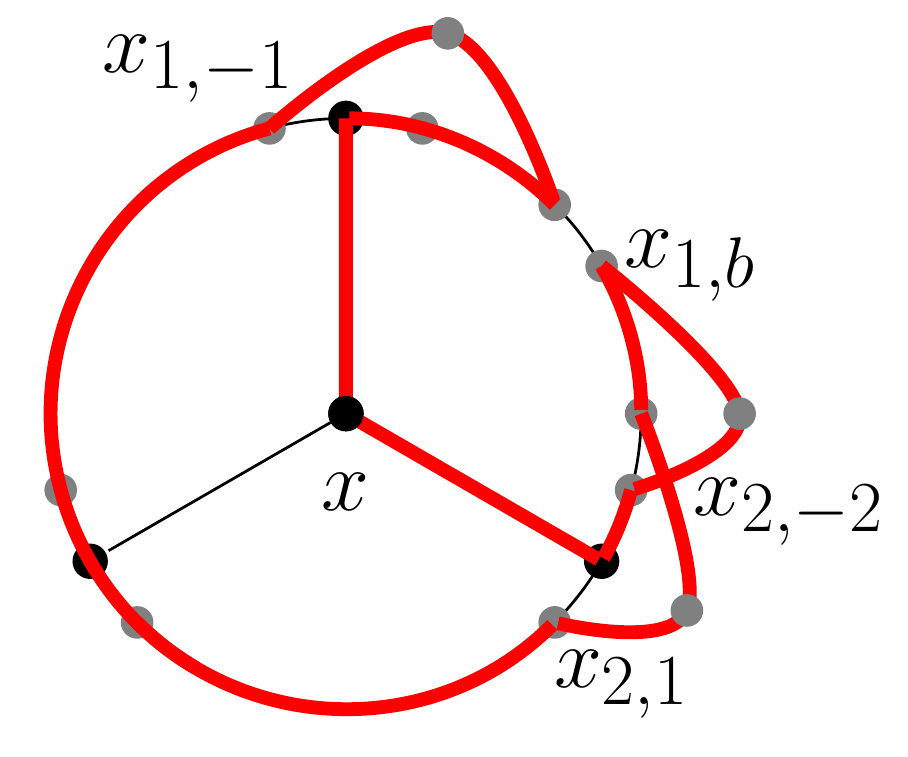}
    \caption{A longer cycle obtained when $x_{2,1}$ and $x_{2,-2}$ have a CON.}
    \label{fig:xc}
\end{figure}

We can apply the argument in this subsection in six possible ways: we can swap the roles of $x_{1,1}$ and $x_{1,-1}$ in the argument above, and we can choose any of the three short-type configurations in place of the one formed by $x_{1,1}$ and $x_{1,-1}$. All six of these arguments must terminate in the same case. In particular, just as we concluded that $x_{2,1}$ and $x_{2,-2}$ can have no CON, we also conclude that $x_{1,-1}$ and $x_{1,2}$ can have no CON. This means that in the argument above (and in all variations of the argument), we must have $b=2$.

Therefore, for each $i$, the vertices $x_{i,2}, x_{i,3}, \dots, x_{i+1,-2}$ all have a CON with both $x_{i,1}$ and with $x_{i+1, -1}$. 
In other words, all $i\in [3]$ are abundant.
\end{proof}

By Lemma~\ref{nomed} and Lemma~\ref{nolong}, no $i \in [3]$ has medium or long type. Therefore by Lemma~\ref{last-lemma}, every $i \in [3]$ is abundant. This contradicts Lemma~\ref{manycon}, completing the proof of Theorem~\ref{jackson6}.

\section{Concluding remarks}
\begin{enumerate}
\item Theorem~\ref{jackson6} is a natural $3$-connected strengthening of Conjecture~\ref{jacksonconj} for $2$-connected graphs. Consider the following family of $k$-connected graphs.

\begin{const}\label{con_q}
Let $k$ be a positive integer, and let $n_1 \geq \ldots \geq n_{k+1} \geq 1$ be such that $n_1 + \ldots + n_{k+1} = n$.
 Let $G_k(n_1, \ldots, n_{k+1}; \delta) \in \cG(n,(k+1)(\delta - k)+k, \delta)$ be the bipartite graph obtained from $K_{\delta-k, n_1} \cup \ldots  \cup K_{\delta-k, n_{k+1}} $  by adding $k$ vertices $a_1, \ldots, a_{k}$ that are each adjacent to every vertex in the parts of size $n_1, \ldots, n_{k+1}$. Let $\mathcal G_k(n, \delta)$ be the collection of the graphs $G_k(n_1, \ldots, n_{k+1}; \delta)$ for all suitable choices of $n_1, \ldots, n_{k+1}$.
 \end{const}
 
When $k=2$ or $k=3$, $\mathcal G_k$ is the family of all graphs in Construction~\ref{con5} or Construction~\ref{con6} respectively.
 
\begin{question}
Let $m,n,k,\delta$ be integers. Suppose $k \geq 4$,  $\delta\geq n$ and $m \leq (k+1)(\delta - k)+k-1$. Is it true that every $k$-connected graph $G \in \cG(n,m,\delta)$ contains a cycle of length $2n$? Moreover, if $k \geq 3$, are the graphs in the family $\mathcal G_k(n, \delta)$ the only extremal examples with $m = (k+1)(\delta - k)+k$?
\end{question}

If the answer is negative, it would also be interesting to find the value(s) of $k$ at which other extremal examples occur.
\item Jackson also made the following conjecture.
\begin{conj}[Jackson~\cite{jackson}]\label{jacksonconj2} Let $m,n,\delta$ be integers with $n > \delta$. If a graph $G \in \cG(n,m,\delta)$ is 2-connected and satisfies 
\[
	m\leq \left\lfloor \frac{2(n-\alpha)}{\delta-1-\alpha} \right\rfloor (\delta-2) + 1
\]
where $\alpha = 1$ if $\delta$ is even and $\alpha = 0$ if $\delta$ is odd, then $G$ contains a cycle of length at least $2\min(n,\delta)$.\end{conj}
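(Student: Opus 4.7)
The plan is to argue by contradiction: take a 2-connected $G \in \cG(n,m,\delta)$ with $n > \delta$ and no cycle of length $2\min(n,\delta)=2\delta$, and show that $m$ must exceed the claimed bound. Let $C$ be a longest cycle in $G$, set $2\ell = |C|$, so by hypothesis $\ell \le \delta - 1$. Using the 2-connectivity of $G$, for every $x \in X - V(C)$ we obtain an $x,C$-fan of size at least $2$. Choose a ``best pair'' $(C,x)$ analogously to Definition~\ref{better}, optimizing first the length of $C$, then the size of the fan, and then the number of $Y$-vertices used. The preliminary lemmas of Section~2 (in particular the $y^+(w)\notin \widetilde{T}$ argument and the no-common-neighbor-outside-$C$ obstruction) adapt with only minor changes, and show that the attachment set $\widetilde{T}(C,x)$ has size exactly $2$.

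Next I would study the structure $G - V(C)$ induces on $C$. With $\widetilde{T} = \{u_1, u_2\}$, the cycle $C$ is split into two arcs $U_1, U_2$ between $u_1$ and $u_2$, and the length constraint $\ell \le \delta - 1$ forces at least one of these arcs to be ``short'' (contain fewer than $\delta$ vertices of $X$). The goal is then to iterate: for every component $D'$ of $G - V(C)$, look at its attachment set $\widetilde{T}(C,D')$ and mark the arcs it defines. One should show that the marked arcs decompose $C$ into at most $q = \lfloor 2(n-\alpha)/(\delta-1-\alpha)\rfloor$ pieces, each of which, together with a component of $G - V(C)$ hanging off it, is forced (via the CON/crossing machinery of Lemmas~\ref{cross0}--\ref{cros12}) to use $\delta - 2$ distinct $Y$-vertices. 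Summing over all $q$ pieces and adding the one ``backbone'' vertex of $Y$ recovers the bound $m \ge q(\delta-2)+1$, giving the desired contradiction.

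The parity correction through $\alpha$ is designed to absorb the following subtlety: when $\delta$ is even, in Construction~\ref{con_q} with $k=1$ (which is the extremal family here) one can slightly enlarge $n_i$ on one block at no cost, so the counting bound gains an extra $+1$; when $\delta$ is odd this adjustment is absent. In the proof this corresponds to whether the shortest arc of $C$ has an odd or even number of $X$-vertices, and is handled by a dichotomy on the parity of $|U_i \cap X|$ after the structural reduction.

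The main obstacle is the second step: showing that the arcs produced by all components of $G - V(C)$ truly decompose $C$ into at most $q$ independent pieces, each contributing the full $\delta-2$ to $m$. Unlike in Theorem~\ref{jackson6}, where $3$-connectivity gave $t(x,C)\ge 3$ and pinned the configuration down very tightly, the $2$-connected setting permits many components to share one or both attachment vertices, so the ``good set'' argument of Lemma~\ref{good} has to be replaced by a more global edge-counting argument that tracks overlaps between the $Y$-neighborhoods of different components. Executing this overlap analysis, while simultaneously excluding the short/medium/long-type obstructions from Section~\ref{types}, is where the real work lies; this is presumably why the conjecture has resisted the techniques developed here, and any successful proof would likely need a new idea for controlling components with a common attachment pair.
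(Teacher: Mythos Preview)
The paper does not prove this statement: it is Jackson's Conjecture~\ref{jacksonconj2}, listed in the concluding remarks with the explicit comment that it ``remains open'' (only a weaker version is proved elsewhere, in~\cite{KL}). So there is no proof in the paper to compare against.

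Your proposal is not a proof either, and you say as much in your final paragraph. The outline is reasonable as a heuristic --- mimic the best-triple machinery with $2$-connectivity in place of $3$-connectivity, reduce $\widetilde t$ to $2$, and then try to count $Y$-vertices arc by arc --- but the step you flag as ``the main obstacle'' is exactly the whole content of the conjecture. In the $3$-connected proof, the good-set lemma (Lemma~\ref{good}) works because a fan of size $3$ pins down enough of $X^+(T)\cup X^-(T)$ to force $|Y|\ge 4\delta-9$; with only two attachment points per component you lose that leverage entirely, and nothing in your sketch indicates how to recover a global bound of the form $q(\delta-2)+1$ from local information. The parity discussion via $\alpha$ is also speculative: you assert it ``corresponds to whether the shortest arc of $C$ has an odd or even number of $X$-vertices,'' but no argument is given for why that dichotomy produces exactly the $\lfloor 2(n-\alpha)/(\delta-1-\alpha)\rfloor$ factor. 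As written, this is a plausible plan of attack on an open problem, not a proof.
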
 
This conjecture remains open. A weaker version is proved in~\cite{KL} in the language of hypergraphs.

\end{enumerate}


\end{document}